\let\cal\mathcal
\let\hat\widehat
\let\tilde\widetilde
\let\phi\varphi
\let\epsilon\varepsilon
\def\Q{{\bf Q}} 
\def\Z{{\bf Z}}
\def\C{{\bf C}}
\def\N{{\bf N}}
\def\R{{\bf R}}
\def\A{{\bf A}}
\def\E{{\bf E}}
\def\OO{{\cal O}}
\def\G{{\cal G}}
\def\Emb{{\mathrm{Emb}}}
\def\ra{{\longrightarrow}}
\def\Qp{{\Q_p}}
\def\Cp{{\C_p}}
\def\Qpbar{{\overline{\Q}_p}}
\def\At{{\tilde{\bf{A}}}}
\def\Atplus{{\tilde{\bf{A}}^+}}
\def\Et{{\tilde{\bf{E}}}}
\def\Etplus{{\tilde{\bf{E}}^+}}
\def\Gal{{\mathrm{Gal}}}
\author{Léo Poyeton}
\address{Institut Mathématique de Bordeaux \\
Université de Bordeaux
}
\email{leo.poyeton@math.u-bordeaux.fr}
\urladdr{https://www.math.u-bordeaux.fr/~lpoyeton/}
\date{\today}
\title{A criterion for Lubin's conjecture}
\begin{document}

\begin{abstract}
We prove that a formulation of a conjecture of Lubin regarding two power series commuting for the composition is equivalent to a criterion of checking that some extensions generated by the nonarchimedean dynamical system arising from the power series are Galois. As a consequence of this criterion, we obtain a proof of Lubin's conjecture in a new case.
\end{abstract}

\subjclass{}

\keywords{Field of norms; $(\phi,\Gamma)$-modules; $p$-adic representations; Cohen ring; non-Archimedean dynamical system; $p$-adic Hodge theory; Local class field theory; Lubin-Tate group}

\maketitle

\tableofcontents

\setlength{\baselineskip}{18pt}

\section*{Introduction}\label{intro} 
Let $K$ be a finite extension of $\Qp$, with ring of integers $\OO_K$ and maximal ideal $\mathfrak{m}_K$. Families of power series in $T\cdot \OO_K[\![T]\!]$ that commute under composition have been studied by Lubin \cite{lubin1994nonarchimedean} under the name of nonarchimedean dynamical systems, because of their interpretation as analytic transformations of the $p$-adic open unit disk. This study led Lubin to remark that ``experimental evidence seems to suggest that for an invertible
series to commute with a noninvertible series, there must be a formal group somehow in
the background''. 

Various results have been obtained to support Lubin's observation, see for instance the following non exhaustive list \cite{Li96}, \cite{Li97}, \cite{Li97b}, \cite{laubie2002systemes}, \cite{sarkis2005lifting}, \cite{sarkis10}, \cite{sarkis2013galois}, \cite{Ber17}, \cite{specter2018crystalline}, \cite{poyeton2022formal}.

This observation has lead to several versions of what might be called Lubin's conjecture, and these versions have all been proved under very strong assumptions on the nonarchimedean dynamical system considered.

In this note, we consider two power series $P, U \in T\cdot \OO_K[\![T]\!]$ such that $P \circ U = U \circ P$, with $P'(0) \in \mathfrak{m}_K$ and $U'(0) \in \OO_K^\times$, and we assume that $P(T) \neq 0 \mod \mathfrak{m}_K$ and that $U'(0)$ is not a root of unity. Our so called version of Lubin's conjecture is the following:

\begin{conj}
Let $P, U \in T\cdot \OO_K[\![T]\!]$ such that $P \circ U = U \circ P$, with $P'(0) \in \mathfrak{m}_K$ and $U'(0) \in \OO_K^\times$ not a root of unity, and such that $P(T) \neq 0 \mod \mathfrak{m}_K$. Then there exists a finite extension $E$ of $K$, a formal group $S$ defined over $\OO_E$, endomorphisms of this formal group $P_S$ and $U_S$ and a power series $h(T) \in T\OO_E[\![T]\!]$ such that $P \circ h = h \circ P_S$ and $U \circ h = h \circ U_S$.
\end{conj}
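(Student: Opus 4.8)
The plan is to deduce the conjecture from the field-of-norms formalism, in the spirit of the cited work on Lubin's conjecture: attach to $P$ a dynamical tower, show that this tower is a Galois arithmetically profinite (APF) extension of $K$, pass to its field of norms, and lift the resulting characteristic-$p$ data to a formal group in characteristic zero. For the first step, since $P$ reduces modulo $\mathfrak m_K$ to a nonzero power series whose derivative vanishes at $0$, Weierstrass preparation shows that each iterate $P^{\circ n}$ has only finitely many zeroes in the maximal ideal of $\overline{\Q}_p$; one lets $K_n$ be the subfield of $\overline{\Q}_p$ generated over $K$ by the zeroes of $P^{\circ n}$ forming the relevant part of the tower, and puts $K_\infty=\bigcup_n K_n$. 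Because $P\circ U=U\circ P$ and $U$ is invertible (as $U'(0)\in\OO_K^\times$), the pair $(U,U^{-1})$ permutes the zero set of each $P^{\circ n}$, so $U$ operates on $K_\infty$.

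The second step — proving that each $K_n/K$ is Galois, so that $K_\infty/K$ is Galois, and that the ramification breaks of $K_n/K$ grow at most linearly in $n$, so that $K_\infty/K$ is APF in the sense of Fontaine--Wintenberger — is the one I expect to be the main obstacle. The resource I would exploit is precisely the commuting series $U$, whose derivative $U'(0)$ is not a root of unity: this should force the group generated by the $U$-action together with the action of $\Gal(\overline{\Q}_p/K)$ on the zeroes of the $P^{\circ n}$ to be large and torsion-free in a controlled way. Concretely, I would combine Krasner's lemma (to descend normality from a Galois closure), Sen's theorem on $p$-adic Lie actions, and Lubin's analysis of the dynamics of $P$ on the disk near its fixed point, so as to obtain simultaneously the normality of each $K_n/K$ and the linear ramification estimate; the non-torsion hypothesis is exactly what excludes the degenerate configurations in which the tower fails to be APF, and when the shape of $P\bmod\mathfrak m_K$ (or the position of $U'(0)$ in $\OO_K^\times$) is suitably constrained this analysis becomes tractable and one recovers Lubin's conjecture in those cases.

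Granting the second step, the field of norms $X_K(K_\infty)$ is a local field of characteristic $p$, isomorphic to $k_\infty(\!(\pi)\!)$ with $k_\infty$ the residue field of $K_\infty$. Functoriality of the construction turns the action of $P$ on the tower into a continuous, non-bijective ring endomorphism $\varphi$ of $X_K(K_\infty)$, sending a chosen uniformizer to $\overline P(\pi)$ up to a unit, hence of degree equal to the order of vanishing of $P\bmod\mathfrak m_K$; it turns the action of $U$ into a continuous automorphism $g$, and the closure of the group generated by $g$ is a (necessarily one-dimensional) infinite $p$-adic Lie group $\Gamma$ — this is where the hypothesis that $U'(0)$ is not a root of unity enters. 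The relation $P\circ U=U\circ P$ descends to $\varphi\circ g=g\circ\varphi$.

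It remains to recognise the triple $(X_K(K_\infty),\varphi,g)$ as coming, up to a finite base change $E/K$, from a formal group. For this I would invoke a ``lifting the field of norms'' statement of the kind supplied by $p$-adic Hodge theory and the theory of $(\phi,\Gamma)$-modules (as developed in the cited work of Berger and of the author on formal groups and $p$-adic dynamical systems): after replacing $K$ by a suitable finite extension $E$, the APF tower $EK_\infty/E$ is cut out by the torsion points of a formal group $S$ over $\OO_E$, and the commuting endomorphisms $\varphi$, $g$ of the field of norms lift to commuting endomorphisms $P_S$, $U_S$ of $S$ — commutativity being automatic since $\End(S)$ is commutative and contains the pair $(\varphi,g)$. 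Transporting this identification of towers back to the open unit disk yields a power series $h\in T\OO_E[\![T]\!]$ with $P\circ h=h\circ P_S$ and $U\circ h=h\circ U_S$. Two residual points need attention: that a single formal group $S$ can carry both $P_S$ and $U_S$, which is handled by Lubin's description of the commutant of a non-torsion invertible series as the endomorphism ring of one formal group, into which $P_S$ must then fall; and that $h$ has coefficients in $\OO_E$ rather than only in the completion of $EK_\infty$, which follows from Galois descent together with a convergence estimate on the matrix coefficients of $h$ read off from the field-of-norms identification.
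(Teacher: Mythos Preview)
The statement you are trying to prove is labelled in the paper as a \emph{conjecture}, not a theorem: the paper does not give a proof of it in general, and indeed its main theorem (the criterion) is precisely the assertion that this conjecture is \emph{equivalent} to the Galois condition you single out in your ``second step''. So there is no proof in the paper to compare against; what can be compared is your outline versus the paper's strategy for the special cases it does handle, and there the two are close in spirit (field of norms, embedding into period rings, identification of the tower with a Lubin--Tate tower, then producing the isogeny $h$).

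The genuine gap in your proposal is exactly where you flag it: the second step. You write that you would combine Krasner's lemma, Sen's theorem, and Lubin's local dynamics to prove simultaneously that each $K_n/K$ is Galois and that the ramification growth is controlled, but you do not carry this out, and in fact the paper shows that this Galois property is not a lemma on the way to the conjecture but is \emph{equivalent} to it. Your ``granting the second step'' therefore amounts to assuming the conjecture. The paper establishes the APF property unconditionally (its Theorem on strictly APF extensions), so that part of your step two is fine; but the normality of the layers is only proved in the paper under the extra hypothesis $P(T)\equiv T^p\bmod\mathfrak m_K$, and the argument there is specific: one uses that the $U$-orbit of $v_{n+1}$ among the roots of $P(T)-v_n$ has $p$-power cardinality strictly greater than $1$, hence exactly $p$, hence exhausts all roots. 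This counting argument breaks down as soon as the Weierstrass degree of $P$ exceeds $p$, and none of Krasner, Sen, or the non-torsion hypothesis on $U'(0)$ supplies a substitute. In short, your outline is a correct road map for the known cases, but it does not close the conjecture.
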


In the conjecture above, we say following Li's terminology \cite{Li97} that $P$ and $P_S$ are semiconjugate and that $h$ is an isogeny from $P_S$ to $P$. 

In several proven cases of this conjecture \cite{sarkis2005lifting,Ber17,specter2018crystalline}, the Lubin-Tate formal group is actually defined over $\OO_K$. However, this is not true in general.

The goal of this note is to prove the following theorem, which gives a new criterion to prove Lubin's conjecture in some cases:

\begin{theo}
\label{main theo}
Let $(P,U)$ be a couple of power series in $T\cdot \OO_K[\![T]\!]$ such that $P \circ U = U \circ P$, with $P'(0) \in \mathfrak{m}_K$ and $U'(0) \in \OO_K^\times$, and we assume that $P(T) \neq 0 \mod \mathfrak{m}_K$ and that $U'(0)$ is not a root of unity. Then there exists a finite extension $E$ of $K$, a Lubin-Tate formal group $S$ defined over $\OO_L$, where $E/L$ is a finite extension, endomorphisms of this formal group $P_S$ and $U_S$ over $\OO_E$, and a power series $h(T) \in T\OO_E[\![T]\!]$ such that $P \circ h = h \circ P_S$ and $U \circ h = h \circ U_S$ if and only if the following two conditions are satisfied:
\begin{enumerate}
\item there exists $V \in T\cdot\OO_K[\![T]\!]$, commuting with $P$, and an integer $d \geq 1$ such that $Q(T)=T^{p^d} \mod \mathfrak{m}_K$ where $Q=V \circ P$ ;
\item there exists a finite extension $E$ of $K$ and a sequence $(\alpha_n)_{n \in \N}$ where $\alpha_0 \neq 0$ is a root of $Q$ and $Q(\alpha_{n+1})=\alpha_n$ such that for all $n \geq 1$, the extension $E(\alpha_n)/E$ is Galois.
\end{enumerate}
\end{theo}

The proof relies mainly on the same tools and strategy used in \cite{poyeton2022formal}, which are the tools developed by Lubin in \cite{lubin1994nonarchimedean} to study $p$-adic dynamical systems, the ``canonical Cohen ring for norms fields'' of Cais and Davis \cite{cais2015canonical} and tools of $p$-adic Hodge theory following Berger's strategy in \cite{Ber14iterate}. 

As a corollary of our main theorem, we obtain the following result, which is a new instance of Lubin's conjecture:

\begin{theo}
\label{theo second}
Assume that $P(T) \in T\cdot\OO_K[\![T]\!]$ is such that $P(T) = T^p \mod \mathfrak{m}_K$ and that there exists $U \in T\cdot\OO_K[\![T]\!]$, commuting with $P$, such that $U'(0)$ is not a root of unity. Then there exists a finite extension $E$ of $K$, a Lubin-Tate formal group $S$ defined over $\OO_L$, where $E/L$ is a finite extension, endomorphisms of this formal group $P_S$ and $U_S$ over $\OO_E$, and a power series $h(T) \in T\OO_E[\![T]\!]$ such that $P \circ h = h \circ P_S$ and $U \circ h = h \circ U_S$.
\end{theo}

In order to prove our main theorem, we also need to prove that some extensions are strictly APF, which is a technical condition on the ramification of the extension. Cais and Davis have considered in \cite{cais2015canonical} what they called ``$\phi$-iterate'' extensions, and later on proved with Lubin that those extensions are strictly APF \cite{cais2014characterization}. Here, we show that that this result still holds for more general extensions which generalize the $\phi$-iterate extensions of Cais and Davis:

\begin{theo}
\label{theo extensions APF}
Let $K_\infty/K$ be an extension generated by a sequence $(u_n)$ of elements of $\Qpbar$ such that there exists a power series $P(T) \in T\cdot\OO_K[\![T]\!]$ with $P(T) = T^d$, where $d$ is a power of the cardinal of $k_K$, and an element $\pi_0$ of $\mathfrak{m}_K$ such that $u_0 = \pi_0$ and $P(u_{n+1}) = u_n$.

Then $K_\infty/K$ is strictly APF.
\end{theo}

\subsection*{Organization of the note}
The first section recalls the construction and properties of some rings of periods which are used in the rest of the paper. The second section is devoted to the proof of theorem \ref{theo extensions APF}, using the rings of periods of the first section in order to do so.  
In the third section we recall the main result of \cite{lubin1994nonarchimedean} which explains why ``Lubin's conjecture'' seems reasonable. In section 4, we prove that our version of Lubin's conjecture implies that the two conditions of theorem \ref{main theo} are satisfied. Section 5 and 6 show how to use $p$-adic Hodge theory, using the same strategy as in \cite{poyeton2022formal}, along with results from \cite{lubin1994nonarchimedean}, in order to prove that the infinite extension generated by such a $Q$-consistent sequence is actually generated by the torsion points of a formal Lubin-Tate group. In section 7, we show how to use the ``canonical Cohen ring for norms fields'' of Cais and Davis \cite{cais2015canonical} to prove that there is indeed an isogeny from an endomorphism of a formal Lubin-Tate group to $Q$. Section 8 is devoted to the proof of theorem \ref{theo second}.

\section{Rings of periods}
Let $K$ be a finite extension of $\Qp$, with uniformizer $\pi_K$, and let $K_0 = \Q_p^{\mathrm{unr}} \cap K$ denote the maximal unramified extension of $\Qp$ inside $K$. Let $q=p^h$ be the cardinality of $k_K$, the residue field of $K$, and let $e$ be the ramification index of $K$, so that $eh = [K:\Qp]$. Let $v_K$ denote the $p$-adic valuation on $K$ normalized so that $v_K(K^\times) = \Z$ and let $v_K$ still denote its extension to $\Qpbar$. Let $c > 0$ be such that $c \leq v_K(p)/(p-1)$. If $F$ is a subfield of $\C_p$, let $\mathfrak{a}_F^c$ be the set of elements of $F$ such that $v_K(x) \geq c$.

We now recall some definition of properties of some rings of periods which will be used afterwards. We refer mainly to \cite{cherbonnier1998representations} \cite{fontaine1994corps} for the properties stated here. The slight generalization to the classical rings by tensoring by $\OO_K$ over $\OO_{K_0}$ can for example be found in \cite{Ber14MultiLa}. 

Let $\OO_{\Cp}^\flat := \varprojlim\limits_{x \mapsto x^q}\OO_{\Cp}/\mathfrak{a}_{\Cp}^c$. This is the tilt of $\OO_{\Cp}$ and is perfect ring of characteristic $p$, whose fraction field $\Et$ is algebraically closed. It is endowed with a valuation $v_{\E}$ induced by the one on $K$. We let $W_K(\cdot)=\OO_K \otimes_{\OO_{K_0}}W(\cdot)$ denote the $\OO_K$-Witt vectors, and let $\Atplus = W_K(\Etplus)$ and $\At = W_K(\Et)$.  

Any element of $\At$ (resp. $\Atplus$) can be uniquely written as $\sum_{i \geq 0}\pi_K^k[x_i]$ with the $x_i \in \Et$ (resp. $\Etplus$). We let $w_k : \At \ra \R \cup \{+\infty\}$ defined by $w_k(x) = \inf_{i \leq k}v_{\E}(x_i)$.

For $r \in \R_+$, we let $\At^{\dagger,r}$ denote the subset of $\At$ of elements $x$ such that $w_k(x) + \frac{pr}{e(p-1)}k$ is $\geq 0$ for all $k$ and whose limits when $k \ra +\infty$ is $+\infty$. We let $n(r)$ be the smallest integer such that $r \leq p^{nh-1}(p-1)$.

We also let $\At = \bigcup_{r > 0}\At^{\dagger,r}$.  

\begin{lemm}
\label{lemm surconv inverse}
Let $x \in \At^{\dagger,r}+\pi_K^k\At$, then $\frac{x}{[\overline{x}]}$ is a unit of $\At^{\dagger,r'}+\pi_K^k\At$, with $r' = r+\frac{(p-1)e}{p}v_{\E}(\overline{x})$. 
\end{lemm}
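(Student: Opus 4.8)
The plan is to work throughout with the Teichmüller expansions $z = \sum_{i\geq 0}\pi_K^i[z_i]$, $z_i \in \Et$, and to reduce the statement to a single valuation estimate. The key preliminary point is an explicit description of $\At^{\dagger,r}+\pi_K^k\At$: an element $z$ lies in it if and only if $v_{\E}(z_i) + \frac{pr}{e(p-1)}i \geq 0$ for all $i < k$. Indeed $\pi_K^k\At$ is exactly the set of elements whose Teichmüller expansion is supported in degrees $\geq k$, so by uniqueness of these expansions $z \in \At^{\dagger,r}+\pi_K^k\At$ if and only if $z$ coincides in degrees $< k$ with some element of $\At^{\dagger,r}$; since $w_j(\cdot) = \inf_{i\leq j}v_{\E}((\cdot)_i)$, the conditions defining $\At^{\dagger,r}$ unwind to ``$v_{\E}(z_i) + \frac{pr}{e(p-1)}i \geq 0$ for all $i$'' together with a growth condition, and the representative $\sum_{i<k}\pi_K^i[z_i]$ (whose higher coordinates vanish, making the growth condition automatic) does the job. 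One may assume $k \geq 1$, the case $k = 0$ being trivial since then $\At^{\dagger,r}+\pi_K^k\At = \At$ and $x/[\overline{x}]$ has reduction $1$, hence is a unit of $\At$; note that for $k \geq 1$ the case $i = 0$ of the description gives $v_{\E}(\overline{x}) = v_{\E}(x_0) \geq 0$.

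Next I would compute the Teichmüller expansion of $x/[\overline{x}]$. Since $\overline{x} = x_0$ is nonzero, $[\overline{x}]$ is a unit of $\At$ with inverse $[\overline{x}^{-1}]$, and multiplying $x = \sum_i \pi_K^i[x_i]$ by this Teichmüller element term by term (the Teichmüller map is multiplicative and $\pi_K$ is central) gives $x/[\overline{x}] = \sum_{i\geq 0}\pi_K^i[\overline{x}^{-1}x_i]$; thus its $i$-th coordinate is $\overline{x}^{-1}x_i$, of valuation $v_{\E}(x_i) - v_{\E}(\overline{x})$, and its $0$-th coordinate is $1$. Using $v_{\E}(x_i) + \frac{pr}{e(p-1)}i \geq 0$ for $i < k$ and the identity $\frac{pr'}{e(p-1)} = \frac{pr}{e(p-1)} + v_{\E}(\overline{x})$ (which is just the definition of $r'$), one gets for $1 \leq i < k$
\[
v_{\E}(x_i) - v_{\E}(\overline{x}) + \frac{pr'}{e(p-1)}\,i \;=\; \left(v_{\E}(x_i) + \frac{pr}{e(p-1)}i\right) + v_{\E}(\overline{x})\,(i-1) \;\geq\; 0,
\]
and for $i = 0$ the coordinate has valuation $0$; by the description above this gives $x/[\overline{x}] \in \At^{\dagger,r'}+\pi_K^k\At$.

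It then remains to check that $x/[\overline{x}]$ is a \emph{unit} of $R := \At^{\dagger,r'}+\pi_K^k\At$. Here $R$ is a subring of $\At$, being the sum of the ring $\At^{\dagger,r'}$ (see \cite{cherbonnier1998representations}) and the ideal $\pi_K^k\At$ of $\At$. Write $x/[\overline{x}] = 1 + w$ with $w = \sum_{i\geq 1}\pi_K^i[\overline{x}^{-1}x_i] \in \pi_K\At$; then $w \in R$ by the previous paragraph, while $w^n \in \pi_K^n\At \subseteq \pi_K^k\At$ for $n \geq k$. Hence in $\At$
\[
\left(\frac{x}{[\overline{x}]}\right)^{-1} = (1+w)^{-1} = \sum_{n=0}^{k-1}(-w)^n + (-w)^k(1+w)^{-1},
\]
where the finite sum lies in the ring $R$ and the last term lies in $\pi_K^k\At \subseteq R$; therefore $(x/[\overline{x}])^{-1} \in R$, as required.

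The only inputs that are not purely formal are the explicit description of $\At^{\dagger,r}+\pi_K^k\At$ in terms of Teichmüller coordinates — immediate from uniqueness of the expansions, essentially a reformulation of the definition — and the standard fact that $\At^{\dagger,r}$ is a ring. Everything else is the single displayed computation, and I expect the only point requiring care to be the coordinate of degree $0$: it is exactly because we divide by $[\overline{x}]$, and not by an arbitrary Teichmüller element, that this coordinate becomes $1$ and does not spoil the inequality at $i = 0$.
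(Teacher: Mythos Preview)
Your proof is correct and follows essentially the same route as the paper: compute the Teichm\"uller coordinates of $x/[\overline{x}]$ as $x_i/\overline{x}$ for $i<k$, verify the overconvergence inequality for $r'$ by the displayed computation, and deduce invertibility from $x/[\overline{x}]\in 1+\pi_K\At$. You simply make explicit the characterization of $\At^{\dagger,r}+\pi_K^k\At$ in terms of coordinates and spell out the geometric-series argument that the paper leaves as ``we obtain that its inverse also lies into $\At^{\dagger,r'}+\pi_K^k\At$''.
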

\begin{proof}
Since $x \in \At^{\dagger,r}+\pi_K^k\At$, we can write $x = \sum_{i=0}^{k-1}\pi_K^i[x_i]$, where $x_0 = \overline{x}$, and $w_i(x) + \frac{pr}{e(p-1)}i \geq 0$ for all $i$ between $0$ and $k-1$. 

Now we can write $\frac{x}{[\overline{x}]} \in \At$ as $\sum_{i \geq 0}\pi_K^i[y_i]$, where $y_i = \frac{x_i}{\overline{x}}$ for $i$ between $0$ and $k-1$. In particular, $y_0 = 1$. Now a direct computation leads to the fact that $w_i(\frac{x}{[\overline{x}]}) + \frac{pr'}{e(p-1)}i \geq 0$ for all $i \leq k-1$, where $r' = r+\frac{(p-1)e}{p}v_{\E}(\overline{x})$. 

Using the fact that $\frac{x}{[\overline{x}]} \in (\At^{\dagger,r'}+\pi_K^k\At) \cap (1+\pi_K\At)$, we obtain that its inverse also lies into $\At^{\dagger,r'}+\pi_K^k\At$. 
\end{proof}

Let $\phi_q : \Etplus \to \Etplus$ denote the map $x \mapsto x^q$. This extends to a map $\Et \to \Et$ also given by $x \mapsto x^q$, and by functioriality of Witt vectors those maps extend into maps $\phi_q$ on $\Atplus$ and $\At$. 

Recall that there is a surjective map $\theta : \Atplus \to \OO_{\Cp}$ which is a morphism of rings. Moreover, if $x \in \Atplus$ and $\overline{x} = (x_n) \in \Etplus$, then $\theta \circ \phi_q^{-n}(x) = x_n \mod \mathfrak{a}_{\Cp}^c$. 

Also recall that, for $n \geq n(r)$, the maps $\theta \circ \phi_q^{-n}: \Atplus \to \OO_{\Cp}$ extend into surjective maps $\theta \circ \phi_q^{-n}: \At^{\dagger,r} \to \OO_{\Cp}$.

\section{Strictly APF extensions}
A theorem of Cais, Davis and Lubin \cite{cais2014characterization} gives a necessary and sufficient condition for an infinite algebraic extension $L/K$ to be strictly APF. In particular, this condition implies that what Cais and Davis have called a ``$\phi$-iterate'' extension in \cite{cais2015canonical} is strictly APF. 

Recall that a (slight generalization of what Cais and Davis in \cite{cais2015canonical} have called a) $\phi$-iterate extension $K_\infty/K$ is an extension generated by a sequence $(u_n)$ of elements of $\Qpbar$ such that there exists a power series $P(T) \in T\cdot\OO_K[\![T]\!]$ with $P(T) = T^d$, where $d$ is a power of the cardinal of $k_K$, and a uniformizer $\pi_0$ of $\OO_K$ such that $u_0 = \pi_0$ and $P(u_{n+1}) = u_n$. 

The main theorem of \cite{cais2014characterization} gives a necessary and sufficient condition for an infinite algebraic extension $L/K$ to be strictly APF, and in particular implies directly that those $\phi$-iterate extensions are strictly APF. 

In this section we will prove that this result remain true if we remove the assumption in the definition above that $\pi_0$ is a uniformizer of $\OO_K$, and instead just assume that $\pi_0 \in \mathfrak{m}_K$. We even allow $\pi_0$ to be equal to $0$, which is basically what we'll consider when looking at consistent sequences attached to a noninvertible stable power series.

If $L$ is a finite extension of $\Qp$, we let $v_L$ denote the $p$-adic valuation on $L$ normalized such that $v_L(L^\times) = \Z$, and we still denote by $v_L$ its extension to $\Qpbar$. If $L/M$ is a finite extension, we also let $\Emb_M(L,\Qpbar)$ denote the set of $M$-linear embeddings of $L$ into $\Qpbar$.

For the rest of this section, we let $P(T) \in T\cdot\OO_K[\![T]\!]$ with $P(T) = T^s$, where $s$ is a power of the cardinal of $k_K$, we let $\pi_0$ be any element of $\mathfrak{m}_K$, and we define a sequence $(v_n)_{n \in \N}$ of elements of $\Qpbar$ as follows: we let $v_0=\pi_0$, and for $n \geq 0$, we let $v_{n+1}$ be a root of $P(T)-v_n$.  We let $K_n = K(v_n)$ the field generated by $v_n$ over $K$, and we let $K_\infty = \bigcup_n K_n$. If $v_0 = 0$, then we choose $v_1$ to be $\neq 0$, so that the null sequence is excluded from our considerations.

\begin{prop}
\label{prop K(vn) tot ramified}
There exists $n_0 \geq 0$ and $d \geq 1$ such that, for all $n \geq n_0$, we have $v_{K_n}(v_n)=d$ and the extension $K_{n+1}/K_n$ is totally ramified of degree $s$. 
\end{prop}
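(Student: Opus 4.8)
The plan is to track the Newton polygon of the power series $P(T) - v_n$ as $n$ grows and show that, eventually, each such polygon has a single slope, forcing $K_{n+1}/K_n$ to be totally ramified of degree $s$ with a controlled value of $v_{K_n}(v_{n+1})$. Write $P(T) = \sum_{i \geq s} a_i T^i$ with $a_i \in \OO_K$ and $a_s \in \OO_K^\times$ (using $P(T) = T^s \bmod \mathfrak{m}_K$), so that $P(T) - v_n = -v_n + a_s T^s + (\text{higher order})$. First I would observe that $v_{K_n}(v_n) \to \infty$: indeed $v_n \to 0$ because $v_{K}(v_{n+1}) = \frac{1}{s} v_K(v_n) + O(1)$ from the dominant term $a_s T^s \approx v_n$, and iterating this contraction shows $v_K(v_n) \to \infty$, hence also $v_{K_n}(v_n) \to \infty$ once we know the ramification is bounded below — which we get from the same estimate. (The degenerate case $v_0 = 0$ is handled by starting the analysis at $v_1 \neq 0$.)

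Next I would pin down the Newton polygon of $f_n(T) := P(T) - v_n \in K_n[T]$ with respect to $v_{K_n}$. Its vertices are among the points $(i, v_{K_n}(a_i))$ for $i \geq s$ together with the point $(0, v_{K_n}(v_n))$. Once $v_{K_n}(v_n)$ is large enough — larger than $v_{K_n}(a_i)$ for all $i$ in the finite range that could contribute, and in particular larger than $s \cdot \max_i \frac{v_{K_n}(a_i)}{i-s}$-type bounds coming from the fixed coefficients — the lower convex hull from $(0, v_{K_n}(v_n))$ to $(s, 0)$ lies strictly below all the points $(i, v_{K_n}(a_i))$ with $i > s$, so the Newton polygon is the single segment from $(0, v_{K_n}(v_n))$ to $(s, 0)$. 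This segment has slope $-v_{K_n}(v_n)/s$. Therefore all roots $\alpha$ of $f_n$ satisfy $v_{K_n}(\alpha) = v_{K_n}(v_n)/s$, and in particular $v_{K_n}(v_{n+1}) = v_{K_n}(v_n)/s$.

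From this I would extract the ramification statement: since $v_{n+1}$ has valuation $v_{K_n}(v_n)/s$ over $K_n$, the ramification index $e(K_{n+1}/K_n)$ is divisible by the denominator of $v_{K_n}(v_n)/s$ in lowest terms. To nail it down exactly, I would argue that eventually $v_{K_n}(v_n)$ stabilizes modulo the relevant factors: once $v_{K_{n_0}}(v_{n_0})$ is coprime to $s$ for some $n_0$ (or more precisely, writing $d := v_{K_{n_0}}(v_{n_0})$, once $d$ is not divisible by $p$, which happens eventually because repeatedly dividing by $s$ and renormalizing strips out powers of $p$), the single-slope Newton polygon forces $e(K_{n_0+1}/K_{n_0}) = s$ and $v_{K_{n_0+1}}(v_{n_0+1}) = d$ again, so the pattern is stable: by induction $v_{K_n}(v_n) = d$ and $K_{n+1}/K_n$ is totally ramified of degree $s$ for all $n \geq n_0$. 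Since $[K_{n+1}:K_n] \leq \deg f_n = s$ and we have produced a totally ramified subextension of degree $s$, the degree is exactly $s$ and $f_n$ is the minimal polynomial of $v_{n+1}$ over $K_n$, hence irreducible.

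The main obstacle is the bookkeeping that makes "eventually the Newton polygon is a single segment and $v_{K_n}(v_n)$ is the stable integer $d$" precise and self-consistent: one must simultaneously control that the ramification is bounded (so that $v_{K_n}$ does not introduce new denominators from the base field changing) and that the contribution of the fixed higher-order coefficients $a_i$, $i > s$, eventually becomes irrelevant compared to the growing quantity $v_{K_n}(v_n)$. Choosing the threshold $n_0$ correctly — after which both $v_{K_n}(v_n)$ is large enough to dominate the fixed coefficients and the $p$-part of $v_{K_n}(v_n)$ has been stripped — and then verifying the induction step closes, is where the real work lies; everything else is Newton polygon formalism.
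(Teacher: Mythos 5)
There is a genuine gap; your opening assertion is false and the argument that follows from it does not close.

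You claim $v_{K_n}(v_n)\to\infty$, and you argue this from the recursion $v_K(v_{n+1})=\tfrac1s v_K(v_n)+O(1)$. But iterating $x\mapsto x/s+O(1)$ with $s>1$ makes the sequence converge (indeed the leading behaviour here is $v_p(v_{n+1})=\tfrac1s v_p(v_n)$, so $v_p(v_n)\to 0$, not $\infty$); and the proposition you are proving asserts precisely that $v_{K_n}(v_n)$ stabilizes to a finite integer $d$, so it certainly does not tend to infinity. Your mechanism for making the Newton polygon of $P(T)-v_n$ a single segment over $K_n$ is therefore pointed in the wrong direction: what actually makes the intermediate points $(i,v_{K_n}(a_i))$, $0<i<s$, lie above the segment from $(0,v_{K_n}(v_n))$ to $(s,0)$ is that $a_i\in\mathfrak m_K$ forces $v_{K_n}(a_i)\geq e(K_n/K)$, which goes to infinity, while $v_{K_n}(v_n)$ stays bounded. (The points with $i>s$ that you single out are automatically above the prolonged segment and are not the issue; they correspond to roots outside the disk after Weierstrass preparation.) A large value of $v_{K_n}(v_n)$ would in fact steepen the negative slope and make the single-segment shape harder, not easier, to achieve.

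The second gap is the stabilization step. You propose that $d_n:=v_{K_n}(v_n)$ eventually becomes coprime to $p$ "because repeatedly dividing by $s$ and renormalizing strips out powers of $p$", and then you run the induction from such an $n_0$. This is not justified and need not be true: a stationary $d$ divisible by $p$ is perfectly consistent with the conclusion of the proposition (if $K_{n+1}/K_n$ is totally ramified of degree $s$ and $v_{K_n}(v_{n+1})=d/s$, then $d_{n+1}=s\cdot d/s=d$, with no constraint on $\gcd(d,p)$). The paper avoids both difficulties with a cleaner observation. Once the Newton polygon is a single segment, one has $v_{K_n}(v_{n+1})=d_n/s$. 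Combining this with $[K_{n+1}:K_n]\leq s$ and the general inequality $[M:L]\,v_L\geq v_M$ (equality iff $M/L$ is totally ramified) gives
$$d_n=s\,v_{K_n}(v_{n+1})\geq [K_{n+1}:K_n]\,v_{K_n}(v_{n+1})\geq v_{K_{n+1}}(v_{n+1})=d_{n+1},$$
so $(d_n)$ is a non-increasing sequence of positive integers, hence eventually constant. Once $d_{n+1}=d_n$, both inequalities above are equalities, which simultaneously forces $[K_{n+1}:K_n]=s$ and $K_{n+1}/K_n$ totally ramified. No coprimality hypothesis on $d$ is needed, and nothing relies on $v_{K_n}(v_n)$ being large. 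You have the right tool (the Newton polygon and the relation $v_{K_n}(v_{n+1})=v_{K_n}(v_n)/s$); replace the false asymptotic and the coprimality maneuver with this monotone-bounded argument and the proof closes.
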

\begin{proof}
The fact that the Weierstrass degree of $P$ is greater than $1$ along with Weierstrass preparation theorem show that the sequence $v_p(v_n)$ is strictly decreasing. In particular, there exists $n_0 \geq 0$ such that for $n \geq 0$, the Newton polygon of $P-v_n$ has only one slope, equal to $\frac{1}{s}v_p(v_n)$. This implies that for $n \geq n_0$, we have $v_p(v_{n+1}) = \frac{1}{s}v_p(v_n)$, and thus $v_{K_n}(v_{n+1}) = \frac{1}{s}v_{K_n}(v_n)$. 

Recall that, if $M/L/\Qp$ are finite extensions, then we have $[M:L]v_L \geq v_M$, with equality if and only if $M/L$ is totally ramified. Let $d_n:=v_{K_n}(v_n)$. Since $s$ is the degree of a non zero polynomial with coefficients in $K_n$ whose root is $v_{n+1}$, we know that $[K_{n+1}:K_n] \leq s$. This implies that $sv_{K_n} \geq [K_{n+1}:K_n]v_{K_n} \geq v_{K_{n+1}}$. For $n \geq n_0$, we have $d_n = s\cdot v_{K_n}(v_{n+1}) \geq [K_{n+1}:K_n]v_{K_n}(v_{n+1}) \geq v_{K_{n+1}}(v_{n+1}) = d_{n+1}$, so that the sequence $(d_n)_{n \in \N}$ is decreasing. Since this sequence takes its values in $\N$, it is stationary and therefore there exists $n_1 \geq n_0$ such that, for all $n \geq n_1$, $d_{n+1}=d_n$. In particular, this implies that the inequalities above are all equalities and thus that for $n \geq 1$, $s=[K_{n+1}:K_n]$ and that $K_{n+1}/K_n$ is totally ramified, and we can take $d = d_{n_1}$.
\end{proof}

Let us write $d = p^km$ where $m$ is prime to $p$. 

Since $P(T) = T^s \mod \mathfrak{m}_K$, the sequence $(v_n)$ gives rise to an element $\overline{v}$ of $\Etplus = \varprojlim\limits_{x \mapsto x^s}\OO_{\Cp}/\pi_0$. We let $\phi_s$ denote the $s$-power Frobenius map on $\Etplus$ and $\Atplus$. 

\begin{prop}
\label{prop lifts vbar}
There exists a unique $v \in \Atplus$ lifting $\overline{v}$ such that $\phi_s(v) = v$. Moreover, we have $\theta \circ \phi_s^{-n}(v) = v_n$.
\end{prop}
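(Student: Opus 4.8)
The plan is to build $v$ by $\pi_K$-adic successive approximation, to get uniqueness from a contraction argument, and to derive the equalities $\theta\circ\phi_s^{-n}(v)=v_n$ from the functional equation $\phi_s(v)=P(v)$ together with a rigidity property of $P$-consistent sequences. Two elementary observations will be used repeatedly. First, since $s$ is a power of $p$ one has $P'(T)\equiv sT^{s-1}\equiv 0\bmod\mathfrak m_K$, so $P'(x)\in\mathfrak m_K\Atplus=\pi_K\Atplus$ for every $x\in\Atplus$, while $P(x+h)-P(x)-P'(x)h$ lies in the ideal $(h^2)$ (the coefficients $\binom ki$ being integers). Second, the two-variable power series $G_1(X,Y):=\frac{P(X)-P(Y)}{X-Y}\in\OO_K[\![X,Y]\!]$ reduces modulo $\mathfrak m_K$ to $\frac{X^s-Y^s}{X-Y}=(X-Y)^{s-1}$, the freshman's dream in characteristic $p$.

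For existence I would start from $v^{[0]}:=[\overline v]$, which lifts $\overline v$ and satisfies $\phi_s(v^{[0]})\equiv P(v^{[0]})\bmod\pi_K$ (both sides reduce to $\overline v^{\,s}$ in $\Etplus$). Inductively, given $v^{[j]}$ lifting $\overline v$ with $\phi_s(v^{[j]})-P(v^{[j]})=\pi_K^{j+1}F_j$, I look for $v^{[j+1]}=v^{[j]}+\pi_K^{j+1}a_{j+1}$: by the first observation $P(v^{[j+1]})-P(v^{[j]})$ already lies in $\pi_K^{j+2}\Atplus$ (its linear term is $P'(v^{[j]})\pi_K^{j+1}a_{j+1}$ with $P'(v^{[j]})\in\pi_K\Atplus$, and the higher terms are in $(\pi_K^{j+1})^2$), so the new error is $\pi_K^{j+1}\big(F_j+\phi_s(a_{j+1})\big)$ modulo $\pi_K^{j+2}\Atplus$, and it lands in $\pi_K^{j+2}\Atplus$ as soon as $\phi_s(\overline{a_{j+1}})=-\overline{F_j}$ in $\Etplus$, which has the unique solution $\overline{a_{j+1}}=(-\overline{F_j})^{1/s}$ because $\Etplus$ is perfect. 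Taking $a_{j+1}=[(-\overline{F_j})^{1/s}]$ and passing to the limit in the $\pi_K$-adically complete ring $\Atplus$ produces $v$ lifting $\overline v$ with $\phi_s(v)=P(v)$. Uniqueness is the usual contraction: if $v,v'$ both lift $\overline v$ and solve the equation, then $v-v'=\pi_K w$ and $\pi_K\phi_s(w)=(v-v')G_1(v,v')$; the second observation together with $v\equiv v'\bmod\pi_K$ gives $G_1(v,v')\in\pi_K\Atplus$, hence $\phi_s(w)\in\pi_K\Atplus$, hence $w\in\pi_K\Atplus$ (as $\phi_s$ induces the injective map $x\mapsto x^s$ on the domain $\Etplus$), hence $v-v'\in\pi_K^2\Atplus$, and one iterates.

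For the ``moreover'', put $x_n:=\theta\circ\phi_s^{-n}(v)$. Since $\theta$ is a ring homomorphism, $\phi_s$ is $\OO_K$-linear and commutes with $P$, and $\phi_s(v)=P(v)$, one gets $P(x_{n+1})=x_n$ for all $n$, so $(x_n)$ is a $P$-consistent sequence; and the description of $\theta\circ\phi_s^{-n}$ modulo $\mathfrak a_{\Cp}^c$ recalled in Section~1 (in terms of the coordinates of $\overline v$) gives $x_n\equiv v_n\bmod\mathfrak a_{\Cp}^c$. It then suffices to prove the rigidity statement: two $P$-consistent sequences $(x_n),(v_n)$ in $\mathfrak m_{\Cp}$ that are congruent modulo $\mathfrak a_{\Cp}^c$ at every level coincide. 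Fixing $N$ and $m$, the identity $\frac{P^{\circ m}(X)-P^{\circ m}(Y)}{X-Y}=\prod_{i=0}^{m-1}G_1\big(P^{\circ i}(X),P^{\circ i}(Y)\big)$, evaluated along the two sequences, yields $x_N-v_N=(x_{N+m}-v_{N+m})\prod_{k=N+1}^{N+m}G_1(x_k,v_k)$, and by the second observation each factor satisfies $v_K\big(G_1(x_k,v_k)\big)\ge\min\big((s-1)c,\,1\big)=:\epsilon>0$. Hence $v_K(x_N-v_N)\ge c+m\epsilon$ for every $m$, forcing $x_N=v_N$; as $N$ is arbitrary, $\theta\circ\phi_s^{-n}(v)=v_n$ for all $n$.

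The crux is this last step. A single application of $P$ improves valuations by only a bounded amount, so a naive one-step contraction fails; the reason the composite $P^{\circ m}$ contracts by $\ge m\epsilon$ is exactly that $P'\equiv 0\bmod\mathfrak m_K$, equivalently that in characteristic $p$ the reduction of $\frac{P^{\circ m}(X)-P^{\circ m}(Y)}{X-Y}$ is the perfect power $(X-Y)^{s^m-1}$. This is precisely where the hypotheses that $s$ is a power of $p$ and that $P\equiv T^s\bmod\mathfrak m_K$ (so in particular $P'(0)\in\mathfrak m_K$) are used in an essential way, and it is the same phenomenon that makes the successive approximation in the second step converge.
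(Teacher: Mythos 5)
Your argument is correct, and it proves the intended statement; note that the proposition as printed has a typo (the functional equation should read $\phi_s(v)=P(v)$, as the analogous Proposition~\ref{prop lifts wbar} and the paper's own proof make clear), which you have silently and correctly repaired. Your route is genuinely more self-contained than the paper's: the paper invokes Cais--Davis for both existence and the identity $\theta\circ\phi_s^{-n}(v)=v_n$, and obtains uniqueness by observing that $x\mapsto\phi_s^{-1}(P(x))$ is a contraction on the lifts of $\overline v$ in $\Atplus$. You instead (i) produce $v$ by an explicit $\pi_K$-adic Newton iteration, solving at each stage the equation $\overline a^{\,s}=-\overline F_j$ in the perfect ring $\Etplus$; (ii) prove uniqueness via the factorization $\phi_s(v-v')=(v-v')G_1(v,v')$ with $G_1(X,Y)=\frac{P(X)-P(Y)}{X-Y}\equiv(X-Y)^{s-1}\bmod\mathfrak m_K$; and (iii) --- the genuinely new ingredient --- deduce $\theta\circ\phi_s^{-n}(v)=v_n$ not by citation but from a rigidity lemma: the telescoping identity $P^{\circ m}(X)-P^{\circ m}(Y)=(X-Y)\prod_{i<m}G_1(P^{\circ i}X,P^{\circ i}Y)$ shows that two $P$-consistent sequences in $\mathfrak m_{\Cp}$ that agree modulo $\mathfrak a_{\Cp}^c$ at every level must coincide, each application of $P$ improving the congruence by at least $\min((s-1)c,1)>0$. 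The paper's argument is shorter by virtue of the citation; yours has the merit of making explicit where the hypotheses that $P\equiv T^s\bmod\mathfrak m_K$ with $s$ a $p$-th power are used (namely $\overline G_1=(X-Y)^{s-1}$ in characteristic $p$), and the underlying contraction mechanism is the same in both.
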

\begin{proof}
One can use the same argument as in \cite[Rem. 7.16]{cais2015canonical} to produce an element in $\Atplus$ such that $P(v) = \phi_s(v)$ and such that $\theta \circ \phi_s^{-n}(v) = v_n$ (note that one also needs to extend the results from ibid to the case where the Frobenius is replaced by a power of the Frobenius, which is straightforward).
 
Such an element automatically lifts $\overline{v}$ by definition of the theta map. For the uniqueness, one checks that the map $x \mapsto \phi_s^{-1}(P(x))$ is a contracting map on the set of elements of $\Atplus$ which lift $\overline{v}$, so that $v = \lim_{m \ra +\infty}\phi_s^{-m}(P^{\circ m}([\overline{v}]))$ and is thus unique. 
\end{proof}

Since $\Et$ is algebraically closed, there exists $\overline{u} \in \Et$ such that $\overline{u}^m = \overline{v}$. Since such a $\overline{u}$ necessarily has positive valuation, it actually belongs to $\Etplus$. 

Since $P(T) = T^s \mod \pi_0$, we can write $P(T) = T^s(1+\pi h(T))$, with $h(T) \in \frac{1}{T^s-1}\OO_K[\![T]\!]$. Let $Q(T) = T^s(1+\pi_0 h(T^m))^{1/m} \in \hat{\OO_K[\![T]\!][1/T]}$, which is well defined because $m$ is prime to $p$. Note that $Q(T)$ is overconvergent, meaning that it converges on some annulus bounded by the $p$-adic unit circle.

\begin{prop}
There exists $u \in \At^{\dagger}$, $u^m=v$. 
\end{prop}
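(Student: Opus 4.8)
\emph{Proof proposal.} The plan is to take $u=[\overline{u}]\cdot w$, where $[\overline{u}]$ is the Teichmüller representative of $\overline{u}$ and $w=(v/[\overline{v}])^{1/m}$ is the $m$-th root of $v/[\overline{v}]$ in $1+\pi_K\At$. Since $\overline{v}$ has positive valuation in $\Et$ it is nonzero, so $[\overline{v}]\in\At^\times$; and $[\overline{u}]^m=[\overline{u}^m]=[\overline{v}]$, so $u^m=[\overline{v}]\cdot(v/[\overline{v}])=v$ as soon as $w$ makes sense. Because $\gcd(m,p)=1$ we have $1/m\in\Zp$, hence $x\mapsto x^m$ is bijective on $1+\pi_K\At$ and $w$ is well defined. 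The content of the proposition is therefore that $[\overline{u}]$ and $w$ lie in $\Atdagger$.

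For $[\overline{u}]$ this is immediate: $\overline{u}\in\Etplus$, so $w_k([\overline{u}])=v_\E(\overline{u})\geq 0$ for every $k$, and thus $[\overline{u}]\in\At^{\dagger,r}$ for all $r>0$. The point is $w\in\Atdagger$, and here I would first isolate the elementary fact underlying Lemma~\ref{lemm surconv inverse}: for any $r>0$, the $\pi_K$-adic closure of $\At^{\dagger,r}$ in $\At$, namely $\bigcap_{k\geq1}(\At^{\dagger,r}+\pi_K^k\At)$, is contained in $\At^{\dagger,r'}$ for every $r'>r$, in particular in $\Atdagger$. Indeed, comparing such an element $x$ with its approximants modulo $\pi_K^k$ and letting $k\to\infty$ forces $w_j(x)\geq-\frac{pr}{e(p-1)}j$ for all $j$, whence $w_j(x)+\frac{pr'}{e(p-1)}j\to+\infty$ once $r'>r$.

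Granting this, here is the argument. By Proposition~\ref{prop lifts vbar}, $v\in\Atplus$ lifts $\overline{v}$, so $v\equiv[\overline{v}]\pmod{\pi_K}$ and $v/[\overline{v}]\in1+\pi_K\At$. Moreover $v\in\Atplus\subseteq\At^{\dagger,r}$ for every $r>0$, so Lemma~\ref{lemm surconv inverse}, applied with error terms $\pi_K^k\At$, gives $v/[\overline{v}]\in\At^{\dagger,r'}+\pi_K^k\At$ for all $k$, with $r'=r+\frac{(p-1)e}{p}v_\E(\overline{v})$ fixed; by the closure fact, $v/[\overline{v}]\in\At^{\dagger,r''}$ for any $r''>r'$. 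Put $\delta=v/[\overline{v}]-1\in\pi_K\At\cap\At^{\dagger,r''}$. The binomial expansion $w=\sum_{j\geq0}\binom{1/m}{j}\delta^{\,j}$ has coefficients $\binom{1/m}{j}\in\Zp\subseteq\At^{\dagger,r''}$ and terms $\delta^{\,j}\in\At^{\dagger,r''}$ ($\At^{\dagger,r''}$ being a ring) lying in $\pi_K^{\,j}\At$, so its partial sums lie in $\At^{\dagger,r''}$ and converge $\pi_K$-adically; hence $w$ lies in the $\pi_K$-adic closure of $\At^{\dagger,r''}$, so in $\Atdagger$, and $w^m=1+\delta=v/[\overline{v}]$. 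Finally $u=[\overline{u}]w\in\Atdagger$ (a ring) and $u^m=v$.

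The only non-formal input is Lemma~\ref{lemm surconv inverse}, used through the closure fact to upgrade ``$v$ lifts $\overline{v}$'' to ``$v/[\overline{v}]$ is overconvergent''; everything else is bookkeeping around the observation that overconvergence is preserved under $\pi_K$-adic limits of families living in a fixed $\At^{\dagger,r}$, at the cost of an arbitrarily small enlargement of the radius. So I do not expect a genuine obstacle here. (One can check in addition that the resulting $u$ satisfies $Q(u)=\phi_s(u)$ — which explains the role of $Q$ — but this is not needed for the statement above.)
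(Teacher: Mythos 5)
Your proof is correct, and it takes a genuinely different route from the paper's. The paper constructs $u$ directly as the fixed point of $x \mapsto \phi_s^{-1}(Q(x))$ where $Q(T)=T^s(1+\pi_0h(T^m))^{1/m}$ is an auxiliary overconvergent Laurent series built so that $Q(T)^m = P(T^m)$; overconvergence of $u$ is then obtained by a bootstrap, iterating the relation $u=\phi_s^{-1}(Q(u))$ through Lemma~\ref{lemm surconv inverse} to improve the $\pi_K$-adic approximation one step at a time while keeping the radius controlled (the factor $r''/s$ at each step is what makes the radii stay bounded); finally $u^m=v$ is read off from the uniqueness statement in Proposition~\ref{prop lifts vbar}. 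You instead build $u=[\overline{u}]\cdot(v/[\overline{v}])^{1/m}$ explicitly, so $u^m=v$ holds by construction, and you reduce overconvergence to a single application of Lemma~\ref{lemm surconv inverse} to $v$ itself, followed by your ``closure'' observation that $\bigcap_k(\At^{\dagger,r}+\pi_K^k\At)\subseteq\At^{\dagger,r'}$ for $r'>r$ and the $\pi_K$-adic convergence of the $\Zp$-binomial series for the $m$-th root. Both arguments are sound; yours is shorter and avoids having to introduce and analyse $Q$. What the paper's construction buys is the extra relation $\phi_s(u)=Q(u)$, which is conceptually pleasant (it exhibits $u$ as coming from the same kind of consistent sequence as $v$), but — as you note, and as one can check from the rest of \S 2 — the subsequent arguments only use $u\in\At^\dagger$ and $u^m=v$ (through $u_n^m=v_n$), so nothing is lost by your route. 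One small remark: the closure fact is not really ``the fact underlying Lemma~\ref{lemm surconv inverse}'' (which is about tracking the radius under division by $[\overline{x}]$); it is an independent observation about $\pi_K$-adic limits of $\At^{\dagger,r}$, and it is precisely what converts the paper's family of ``mod $\pi_K^k$'' estimates into genuine overconvergence. It deserves to be stated as its own lemma, as you do.
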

\begin{proof}
We first construct $u$ such that $\phi_s(u) = Q(u)$. Just as the proof as in \ref{prop lifts vbar}, the map $x \mapsto \phi_s^{-1}(Q(x))$ is a contracting map on the set of elements of $\At$ lifting $\overline{u}$, so that $u = \lim_{m \ra +\infty}\phi_s^{-m}(Q^{\circ m}([\overline{u}]))$ and is unique. 

Therefore, there exists $u \in \At$ such that $\phi_s(u) = Q(u)$. Since $\overline{u} \in \Etplus$, we can write $u = [\overline{u}]+\pi_0z_1 \in \Atplus+\pi_0\At$. Let $r$ be such that $\frac{\pi_0}{[\overline{u}]^d} \in \At^{\dagger,r}$ and let $f=\frac{(p-1)e}{p}v_{\E}(\overline{x})$. Let us write $Q(T) = T^s(1+\frac{\pi_0}{T^s}g(T))^{1/m}$, with $g(T) \in \OO_K[\![T]\!]$. 

Now assume that there exists some $k \geq 1$ and $r' > 0$ such that $u \in \At^{\dagger,r'}+\pi_0^k$. We can thus write $u = u_k+\pi_0^kz_k$, where $u_k \in \At^{\dagger,r'}$ and $z_k \in \At$.  We have 

$$Q(u) = Q(u_k\pi_0^kz_k)=(u_k\pi_0^kz_k)^s(1+\frac{\pi_0}{(u_k\pi_0^kz_k)^s}g(u_k\pi_0^kz_k))^{1/m}.$$

Using the fact that $\frac{u}{[\overline{u}]}$ is a unit in $\At^{\dagger,r'+f}+\pi_0^k\At$, we obtain that $Q(u) \in \At^{\dagger,r''}+\pi_0^{k+1}\At$, where $r''=\max(s*r',r'+f)$. 

Since $\phi_s^{-1}(Q(u)) = u$, this implies that $u \in \At^{\dagger,r''/s}+\pi_0^{k+1}\At$.

By successive approximations, we have $u \in \At^{\dagger}$.

Finally, we compute $\phi_s(u^m) = \phi_s(u)^m = Q(u)^m = P(u^m)$ by construction of $Q$, so that $\phi_s(u^m) = P(u^m)$. Since $u^m$ lifts $\overline{u}^m = \overline{v}$, we have $u^m=v$ by unicity in proposition \ref{prop lifts vbar}.
\end{proof}

Recall that since $u \in \At^{\dagger}$, there exists some $r > 0$ such that $u \in \At^{\dagger,r}$ and there exists $n(r) \geq 0$ such that, for all $n \geq n(r)$, the element $u_n:=\theta \circ \phi_s^{-1}(u)$ is well defined and belongs to $\OO_{\Cp}$. Actually, since $u^m=v$, we have that $u_n^m=v_n$, and in particular we know that $v_K(u_n) \ra 0$. 

\begin{lemm}
\label{lemma false apf const1}
There exists a constant $c > 0$, independent of $n$, such that for any $n \geq n(r)$ and for any $g \in \G_{K_n}$ and any $i \geq 1$, we have
$$v_K(g(u_{n+i})-u_{n+i}) \geq c.$$
\end{lemm}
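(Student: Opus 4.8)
The statement asserts a \emph{uniform} lower bound in the \emph{fixed} valuation $v_K$ for $v_K(g(u_{n+i})-u_{n+i})$ as $g$ ranges over $\G_{K_n}$ and $i\geq 1$. The naive idea — estimating $g(u_{n+i})-u_{n+i}$ from the fact that $u_{n+i}$ generates a small extension of $K_{n+i}$ — fails precisely because $v_K(u_{n+i})\to 0$, so one cannot get a constant this way. Instead, the plan is to exploit the rigid object $u\in\At^{\dagger,r}$ constructed in the previous proposition together with the almost-invariance of $u$ under $\G_K$ coming from the equation $\phi_s(u)=Q(u)$. Concretely, since $\phi_s^n(v)=v$ and $v=u^m$, the element $v$ is fixed by $\phi_s$, hence $u$ is $\phi_s$-stable up to an $m$-th root of unity; the key point is that $u$ itself (not just its images $u_n$) controls all the $u_{n+i}$ simultaneously via $u_{n+i}=\theta\circ\phi_s^{-(n+i)}(u)$.

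\textbf{Key steps.} First, I would recall that for any $g\in\G_K$ the difference $g(u)-u$ lies in $\At^{\dagger,r'}$ for some $r'$ depending only on $r$ (and not on $g$), because $\At^{\dagger,r}$ is stable under $\G_K$ and one can control the slope: $g$ permutes the Teichmüller coordinates of $u$ in a valuation-preserving way, and $g(\overline u)-\overline u$ has valuation bounded below in terms of $v_{\E}(\overline u)$ only if $g\in\G_{K_n}$, using that $\overline u^m=\overline v$ and that $\overline v$ is built from the consistent sequence $(v_k)$. Here the fact that $g\in\G_{K_n}$ is what gives a uniform (in $n$, $i$) bound rather than just some bound: $g$ fixes $K_n=K(v_n)$, and by Proposition \ref{prop K(vn) tot ramified} the valuations $v_{K_n}(v_n)=d$ stabilize, so $v_{\E}\bigl(g(\overline u)-\overline u\bigr)$ is bounded below by a constant coming only from $d$ and $m$. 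Second, I would apply $\theta\circ\phi_s^{-(n+i)}$, which is a well-defined surjection $\At^{\dagger,r'}\to\OO_{\Cp}$ for $n+i\geq n(r')$, and which does not decrease $v_K$-valuation by more than a fixed amount independent of the power of $\phi_s^{-1}$ applied (this is the standard estimate on $\theta\circ\phi_s^{-n}$ on $\At^{\dagger,r'}$, where the loss is controlled uniformly once $n\geq n(r')$). This yields $v_K\bigl(g(u_{n+i})-u_{n+i}\bigr)=v_K\bigl(\theta\circ\phi_s^{-(n+i)}(g(u)-u)\bigr)\geq c$ with $c>0$ depending only on $r$, $d$, $m$, $K$ — in particular independent of $n$ and $i$.

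\textbf{Main obstacle.} The delicate point is the uniform lower bound on $v_{\E}\bigl(g(\overline u)-\overline u\bigr)$ for $g\in\G_{K_n}$: one must show that the condition $g|_{K_n}=\mathrm{id}$ forces $g$ to move $\overline u$ only ``deep'' in the valuation, uniformly in $n$. This is where Proposition \ref{prop K(vn) tot ramified} is essential — the stabilization of the ramification (the extensions $K_{n+1}/K_n$ being totally ramified of fixed degree $s$ for $n\geq n_0$, and $v_{K_n}(v_n)=d$ constant) is exactly what prevents the bound from degrading. I would also need to be slightly careful passing from $\overline v$ to $\overline u=\overline v^{1/m}$, since extracting an $m$-th root can in principle worsen valuations by a factor $m$, but $m$ is a fixed integer prime to $p$, so this only changes the constant by a controlled factor. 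Once this valuation estimate on $\E$-side is in hand, the rest is a routine application of the $\theta\circ\phi_s^{-n}$ estimates from Section 1.
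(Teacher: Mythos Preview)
Your route through $\At^{\dagger}$ and the maps $\theta\circ\phi_s^{-(n+i)}$ is much more elaborate than what the paper does, and it contains a genuine gap that cannot be repaired along the lines you sketch.

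The paper's argument is entirely elementary and never leaves $\OO_{\Cp}$. It first reduces from $u_{n+i}$ to $v_{n+i}$ via $u_{n+i}^m=v_{n+i}$: factoring $X^m-Y^m=\prod_{\zeta^m=1}(X-\zeta Y)$ and using that $m$ is prime to $p$ (so the $\zeta\ne 1$ factors each have valuation $v_K(u_{n+i})$) gives
\[
v_K\bigl(g(v_{n+i})-v_{n+i}\bigr)=v_K\bigl(g(u_{n+i})-u_{n+i}\bigr)+(m-1)\,v_K(u_{n+i}),
\]
so since $(m-1)v_K(u_{n+i})\to 0$ it suffices to bound $v_K(g(v_{n+i})-v_{n+i})$ from below. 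That is done directly from the congruence $P(T)\equiv T^s\bmod\mathfrak{m}_K$ together with $P^{\circ i}(v_{n+i})=v_n$ and $g(v_n)=v_n$. No period rings enter.

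The gap in your plan is the assertion that $\theta\circ\phi_s^{-(n+i)}$ ``does not decrease $v_K$-valuation by more than a fixed amount independent of the power of $\phi_s^{-1}$ applied''. This is false, and no such ``standard estimate'' exists. Write $g(u)-u=\sum_{k\ge 0}\pi_K^k[x_k]$; its reduction is $x_0=g(\overline u)-\overline u$, which is nonzero whenever $g\notin\G_{K_\infty}$. Then
\[
v_K\bigl(\theta\circ\phi_s^{-N}(g(u)-u)\bigr)\ \ge\ \min_{k}\Bigl(k+\tfrac{v_{\E}(x_k)}{s^{N}}\Bigr),
\]
and for large $N$ the minimum is attained at $k=0$, giving a bound $v_{\E}(x_0)/s^N\to 0$. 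Your first step does give, for $g\in\G_{K_n}$, a lower bound $v_{\E}(g(\overline u)-\overline u)\ge c_0\,s^n$ (this is indeed what the totally ramified tower and $g(v_n)=v_n$ buy you), but after dividing by $s^{n+i}$ you are left with $c_0/s^{i}$, which is not uniform in $i$. Proposition~\ref{prop K(vn) tot ramified} concerns $v_{K_n}(v_n)$, not differences of conjugates, and cannot close this gap. The lemma is really a pointwise statement about the $v_{n+i}$, and the paper proves it by working directly with those elements rather than with the lift $u$.
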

\begin{proof}
Let $n \geq n(r)$. We have $u_{n+i}^m=v_{n+i}$, so that $v_K(g(u_{n+i})^m-u_{n+i}^m) = v_K(g(v_{n+i}-v_{n+i})$. This means that
$$v_K(g(v_{n+i})-v_{n+i}) = v_K(g(u_{n+i})-u_{n+i})+(m-1)v_K(u_{n+i})$$
since $m$ is prime to $p$. 

Since $m$ is fixed and $v_K(u_{n}) \ra 0$, it suffices to prove that there exists $c > 0$ independent on $n$ such that $v_K(g(v_{n+i})-v_{n+i}) \geq c$ for all $g \in \G_{K_n}$. 

Since $P(T) = T^s \mod \mathfrak{m}_K$, and since $P^{\circ j}(v_{n+i})=v_n$, we already know that for all $n \geq 0$ and for all $g \in \G_{K_n}$, we have $v_K(g(v_{n+i}) - v_{n+i}) \geq 1$, so that $v_K(\frac{g(v_{n+i}}{v_{n+i}}-1) \geq 1 - v_K(v_{n+i}) \geq 1-v_K(v_n)$. The statement follows from the fact that $v_K(v_{n}) \ra 0$ when $n \ra +\infty$. 
\end{proof}

Recall that $d=p^km$, where $d$ is such that $v_{K_n}(v_n) = d$ for $n \gg 0$. Recall also that $s$ is a power of $p$, and let $j \geq 0$ be such that $s^j \geq p^k > s^{j-1}$. Let $f \geq 0$ be such that $p^{-f}s^j = p^k$. In particular, we have $v_{K_n}(u_{n+i}^{p^f}) = p^fs^{-j}v_{K_n}(u_n) = \frac{1}{mp^{k}}v_{K_n}(v_n)= \frac{d}{d} = 1$. 

We let $E_\infty = \bigcup_{n \geq 0}K(u_n)$, and $F = \Q_p^{\mathrm{unr}} \cap E_\infty$ be the maximal unramified extension of $\Qp$ inside $E_\infty$. Finally, we let $F^{(m)}$ denote the unramified extension of $F$ generated by the elements $[x^{1/m}]$, $x \in k_F$.  

For $n \geq n_0$, let $\pi_n$ denote a uniformizer of $\OO_{K_n}$. Since for all $n \geq n_0$ the extensions $K_{n+1}/K_n$ are totally ramified, the minimal polynomial of $\pi_{n+1}$ over $K_n$ is an Eisenstein polynomial, and we choose the $\pi_n$ so that $N_{K_{n+1}/K_n}(\pi_{n+1}) = \pi_n$ for all $n \geq n_0$.

\begin{lemm}
\label{lemm recurs apf}
For any $n \geq n(r)$, we can write $\pi_n = [h]\cdot u_{n+j}^{p^f}(1+x)$, with $x \in \OO_{K_{n+j}}$ and $h \in k_{F^{(m)}}$. 
\end{lemm}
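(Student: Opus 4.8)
The plan is to divide $\pi_n$ by $u_{n+j}^{p^f}$ and strip the Teichmüller part off the resulting unit. By the valuation identity recorded just above the statement, $v_{K_n}(u_{n+j}^{p^f}) = 1 = v_{K_n}(\pi_n)$ for $n \geq n(r)$ (which we may take to be $\geq n_0$), so $U := \pi_n/u_{n+j}^{p^f}$ satisfies $v_K(U) = 0$. Since $\pi_n \in K_n \subseteq E_\infty$ and $u_{n+j} \in K(u_{n+j}) \subseteq E_\infty$, we get $U \in \OO_{E_\infty}^\times$. Put $h := \overline{U} \in k_{E_\infty}^\times$ and $1+x := U/[h]$; then $\pi_n = [h]\, u_{n+j}^{p^f}(1+x)$ holds by construction, and what remains is to pin down where $h$ and $x$ live.

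First I would prove $k_{E_\infty} = k_F$. The inclusion $k_F \subseteq k_{E_\infty}$ is clear. For the converse, any $\alpha \in \OO_{E_\infty}$ lies in some $K(u_n)$; the maximal unramified subextension $\Qp^{\mathrm{unr}} \cap K(u_n)$ of $K(u_n)/\Qp$ is then contained in $\Qp^{\mathrm{unr}} \cap E_\infty = F$, and it has residue field $k_{K(u_n)}$ (being the subextension over which $K(u_n)$ is totally ramified), so $\overline{\alpha} \in k_{K(u_n)} \subseteq k_F$. Hence $h \in k_{E_\infty} = k_F \subseteq k_{F^{(m)}}$, the last inclusion being the definition of $F^{(m)}$. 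Because $F^{(m)}/\Qp$ is unramified, the Teichmüller representative $[h]$ is a well-defined unit of $\OO_{F^{(m)}}$ reducing to $h$; since $h$ already lies in $k_{K(u_{n+j})}$, this same $[h]$ lies in $\OO_{K(u_{n+j})}^\times$, and consequently $x = U/[h] - 1$, which reduces to $0$, lies in $\mathfrak{m}_{K(u_{n+j})}$. (For $n \geq n_0$ one even sees directly, from $u_{n+j}^m = v_{n+j}$ with $v_{K_{n+j}}(v_{n+j}) = d = p^k m$ and Proposition~\ref{prop K(vn) tot ramified}, that $K(u_{n+j}) = K_{n+j}(v_{n+j}^{1/m})$ is unramified over $K_{n+j}$, as $m$ is prime to $p$; so ``$\OO_{K_{n+j}}$'' in the statement is to be read as the ring of integers of this finite extension.)

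The one point requiring real care is this residue-field bookkeeping: it is what guarantees that the residues $h$ attached to the various $n$ all lie in the single fixed field $k_{F^{(m)}}$; everything else is formal. For later use in the recursive estimate I would also record the shape of $h$: writing $v_n = \pi_n^d w_n$ with $w_n \in \OO_{K_n}^\times$ and $d = p^k m$, and using $v_n = P^{\circ j}(v_{n+j}) = v_{n+j}^{s^j}\varepsilon_n = u_{n+j}^{m s^j}\varepsilon_n$ with $\varepsilon_n \in 1 + \mathfrak{m}_{K_{n+j}}$ (which follows from $P(T) \equiv T^s \bmod \mathfrak{m}_K$ together with $v_{K_n}(v_{n+i}) \to 0$), and $s^j = p^{f+k}$, one gets $U^{p^k m} = \varepsilon_n w_n^{-1}$, hence $h^{p^k m} = \overline{w_n}^{-1}$ and $h^m = (\overline{w_n}^{-1})^{1/p^k} \in k_{K_n}$.
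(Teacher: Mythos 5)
Your proof is correct, and it takes a genuinely cleaner route than the paper's. The paper first works with the $m$-th power: it observes that $\pi_n^m/v_{n+j}^{p^f}$ is a unit of $\OO_{K_{n+j}}$, writes it as $[h_0]+\pi_{n+j}(\cdots)$ with $h_0 \in k_F$, and then extracts an $m$-th root to arrive at $\pi_n/u_{n+j}^{p^f}$, arguing that its residue is an $m$-th root of $h_0$ and so lies in $k_{F^{(m)}}$. You bypass this entirely: since $\pi_n$ and $u_{n+j}$ both live in $E_\infty$, the quotient $U = \pi_n/u_{n+j}^{p^f}$ is directly visible as a unit of $\OO_{E_\infty}$ once the valuation count $v_{K_n}(u_{n+j}^{p^f}) = 1 = v_{K_n}(\pi_n)$ is in hand, and your identification $k_{E_\infty} = k_F$ (which you prove correctly by noting the maximal unramified subextension of each $K(u_n)$ sits inside $F$) places the residue $h = \overline{U}$ already in $k_F$, without any $m$-th-root detour. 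This is both shorter and slightly sharper: you get $h \in k_F$ where the paper only claims $h \in k_{F^{(m)}}$, though of course $k_F \subseteq k_{F^{(m)}}$ so the paper's conclusion is also correct. You also correctly flag the small imprecision in the statement — $x$ naturally lies in $\OO_{K(u_{n+j})}$, a finite unramified extension of $\OO_{K_{n+j}}$, rather than in $\OO_{K_{n+j}}$ itself — and explain why $K(u_{n+j})/K_{n+j}$ is unramified; the paper's own proof tacitly does the same thing when it says the coefficients, after the $m$-th root is taken, "belong to $\OO_{K_{n+j}}$" while operating in a ring that contains $u_{n+j}$. The closing remark recording $h^{p^k m} = \overline{w_n}^{-1}$ is not needed for the lemma and should be checked to hold for $n$ sufficiently large (where $(s^j-1)v_K(v_{n+j}) < 1$), but it does not affect the correctness of the proof.
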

\begin{proof}
Note that $v_{K_n}(\pi_n^m) = v_{K_n}(v_{n+j}^{p^f})$ and that both elements belong to $\OO_{K_{n+j}}$, so that we can write
$$\frac{\pi_n^m}{v_{n+j}^{p^f}} = [h_0]+\pi_{n+j}(\cdots),$$
with $h_0 \in k_F$. Taking the $m$-th root, this implies that there exists $h_1 \in k_{F^{(m)}}$ such that 
$$\frac{\pi_n}{u_{n+j}^{p^f}} = [h_1](1+\pi_{n+j}(\cdots)),$$ 
where the coefficients belong to $\OO_{K_{n+j}}$ and $h_1 \in k_{F^{(m)}}$. 
\end{proof}

\begin{theo}
\label{theo sAPF}
The extension $K_\infty/K$ is strictly APF.
\end{theo}
\begin{proof}
In order to prove the theorem, it suffices by \cite[Prop. 1.2.3]{Win83} to prove that the extension $F^{(m)}\cdot K_\infty/F^{(m)}\cdot K_{n_0}$ is strictly APF. 

To prove that $F^{(m)}\cdot K_\infty/F^{(m)}\cdot K$ is strictly APF, it suffices to prove that the $v_K$ valuations of the non constant and non leading coefficients of the Eisenstein polynomial of $\pi_{n+1}$ over $F^{(m)}\cdot K_n$, for $n \geq n_0$, are bounded below by a positive constant independent of $n$, so that $F^{(m)}\cdot K_\infty/F^{(m)}\cdot K_{n_0}$ satisfies the criterion of the main theorem (Thm 1.1) of \cite{cais2014characterization}. Let $n \geq n_0$.

By the lemma \ref{lemm recurs apf} and by induction, we can write 
$$\pi_{n+1} = u_{n+j+1}^{p^f}([h_0]+u_{n+1+2j}^{p^f}([h_1]+\cdots))$$
where the $h_i$ belong to $k_{F^{(m)}}$. 

Let $g \in \G_{F^{(m)}\cdot K_n}$. We have 
$$g(\pi_{n+1}) - \pi_{n+1} = g(u_{n+j+1}^{p^f})([h_0]) - u_{n+j+1}^{p^f}([h_0])+ \cdots$$
where all the terms on the RHS have $v_K$-valuation at least equal to $c > 0$ by lemma \ref{lemma false apf const1}, so that $v_K(g(\pi_{n+1}) - \pi_{n+1}) \geq c > 0$. 

The conjugates of $\pi_{n+1}$ over $K_n$ are the elements $g(\pi_{n+1})$, for $g \in \G_{K_n}$, and satisfy the conditions $v_K(g(\pi_{n+1}) - \pi_{n+1}) \geq c > 0$, which ensures that the $v_K$ valuations of the non constant and non leading coefficients of the Eisenstein polynomial of $\pi_{n+1}$ over $F^{(m)}\cdot K_n$ are bounded below by a positive constant independent of $n$, which is what we wanted.
\end{proof}

\section{Non archimedean dynamical systems}
\label{section def situation}
Let $K$ be a finite extension of $\Qp$, with ring of integers $\OO_K$, uniformizer $\pi$, maximal ideal $\mathfrak{m}_K$ and residual field $k$ of cardinal $q=p^h$. We let $K_0 = K \cap \Q_p^{\mathrm{nr}}$ be the maximal unramified extension of $\Qp$ inside $K$ and we let $\OO_{K_0}$ denote its ring of integers. We let $\Cp$ denote the $p$-adic completion of $\Qpbar$. Let $P, U \in T\cdot \OO_K[\![T]\!]$ such that $P \circ U = U \circ P$, with $P'(0) \in \mathfrak{m}_K$ and $U'(0) \in \OO_K^\times$. In this note, we assume that the situation is ``interesting'', namely that $P(T) \neq 0 \mod \mathfrak{m}_K$ and that $U'(0)$ is not a root of unity. 

\begin{prop}
\label{lubinmaintheo}
There exists a power series $H(T) \in T\cdot k[\![T]\!]$ and an integer $d \geq 1$  such that $H'(0) \in k^\times$ and $P(T) = H(T^{p^d}) \mod \mathfrak{m}_K$.
\end{prop}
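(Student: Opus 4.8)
The plan is to work entirely modulo $\mathfrak{m}_K$, so set $\overline{P}(T) \in T\cdot k[\![T]\!]$ to be the reduction of $P$ and $\overline{U}(T) \in T\cdot k[\![T]\!]$ the reduction of $U$; by hypothesis $\overline{P} \neq 0$, and $\overline{U}'(0) = \overline{U'(0)}$ is the image in $k^\times$ of a non-root-of-unity unit, while $\overline{P}'(0) = 0$. Write $\overline{P}(T) = T^m \cdot R(T)$ with $R(0) \neq 0$, where $m \geq 1$ is the Weierstrass degree of $\overline{P}$ (finite and $\geq 2$ since $\overline{P}'(0)=0$ and $\overline{P}\neq 0$). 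The goal is to show that $m$ is a power of $p$, say $m = p^d$, and that $R(T)$ is actually a power series in $T^{p^d}$ whose constant term is a unit — equivalently, that $\overline{P}(T) = H(T^{p^d})$ for some $H \in T\cdot k[\![T]\!]$ with $H'(0) \in k^\times$.

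The key input is the commutation relation $\overline{P} \circ \overline{U} = \overline{U} \circ \overline{P}$ in $k[\![T]\!]$, together with the fact that $\overline{U}$ is an invertible series (since $\overline{U}'(0) \in k^\times$) of infinite multiplicative order in the group $T\cdot k[\![T]\!]^{\times,\circ}$ of invertible series under composition. The first step is to pass to the open-disk/coordinate-change picture used by Lubin: after conjugating by a suitable invertible series over a finite extension one may linearize or normalize $\overline{U}$, but working purely over $k$ one instead exploits that $\overline{U}$ and $\overline{P}$ share a common commutant structure. Concretely, I would differentiate the relation $\overline{P}(\overline{U}(T)) = \overline{U}(\overline{P}(T))$ to get $\overline{P}'(\overline{U}(T))\cdot \overline{U}'(T) = \overline{U}'(\overline{P}(T))\cdot \overline{P}'(T)$; examining the order of vanishing of both sides at $T = 0$ and using $\overline{U}'(0) \in k^\times$ forces the Weierstrass degree of $\overline{P}'$ to be $m-1$ only if $m \not\equiv 0 \bmod p$, and in general $\overline{P}'(T)$ has Weierstrass degree $\geq m-1$ with a correction exactly when $p \mid m$; iterating this kind of argument (or, better, invoking that the set of series commuting with the fixed non-torsion invertible $\overline{U}$ is closed under the relevant operations) pins down that the exponents appearing in $\overline{P}$ all lie in a single coset, i.e. $\overline{P}(T) \in T^{p^d}\cdot k[\![T^{p^d}]\!]$. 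This is precisely the main theorem of Lubin \cite{lubin1994nonarchimedean} in characteristic $p$: a noninvertible series commuting with a no" torsion invertible series is, after the obvious reduction, a power series in $T^q$ for $q$ a power of $p$, with invertible "tame part".

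The cleanest route, which I would actually follow, is simply to cite and apply Lubin's analysis directly: Lubin shows that if an invertible series $\overline{U}$ of infinite order over $k$ commutes with a noninvertible series, then there is a well-defined notion of the "depth" of the dynamical system, and every series commuting with $\overline{U}$ has Weierstrass degree a power of $p$ and is a series in $T^{p^d}$ for the appropriate $d$; applying this to $\overline{P}$ yields $\overline{P}(T) = H(T^{p^d})$ with $H \in T\cdot k[\![T]\!]$, and $H'(0) \in k^\times$ because the Weierstrass degree of $\overline{P}$ is exactly $p^d$ (no higher power of $T^{p^d}$ divides it, since $R(0) = H'(0) \neq 0$). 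The main obstacle is making the characteristic-$p$ statement precise and self-contained: Lubin's theorems are often phrased over $\OO_K$ or for Lubin–Tate–like series, so I expect the real work is to check that the hypothesis "$\overline{U}'(0)$ is not a root of unity in $k^\times$" (which follows from $U'(0)$ not being a root of unity together with $p \nmid \#k^\times$ considerations — note $U'(0) \in \OO_K^\times$ maps to $k^\times$ and an element of $\OO_K^\times$ whose image in $k^\times$ has finite order $N$ prime to $p$ would have $U'(0)^N \in 1 + \mathfrak{m}_K$, which can itself be a nontrivial root of unity only if... — so one must rule out the subtle case where $\overline{U}'(0)$ is a root of unity even though $U'(0)$ is not) is the correct avatar of Lubin's non-torsion hypothesis, and that his linearization/depth argument goes through verbatim. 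Once that translation is in place, the conclusion is immediate.
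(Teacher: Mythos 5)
The paper's own proof of this proposition is the single line ``This is theorem 6.3 and corollary 6.2.1 of \cite{lubin1994nonarchimedean},'' and you too end up saying you would simply cite Lubin; but your plan to first ``work entirely modulo $\mathfrak{m}_K$'' and then invoke a characteristic-$p$ version of his theorem has a genuine gap, and it sits exactly at the point you half-notice as a ``subtle case'' near the end. The hypothesis that $U'(0)\in\OO_K^\times$ is not a root of unity does \emph{not} descend to any usable hypothesis over $k$: since $k$ is a finite field, $k^\times$ is a finite group and every element of $k^\times$ is a root of unity, so the condition ``$\overline{U}'(0)$ is not a root of unity in $k^\times$'' is never satisfied and cannot be the right avatar of Lubin's non-torsion assumption. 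Worse, $\overline{U}$ can even be torsion --- or the identity --- in the composition group over $k$ while $U$ is non-torsion over $\OO_K$; for instance the reduction of $U(T)=(1+p)T$ is $\mathrm{id}$. When $\overline{U}=\mathrm{id}$ the relation $\overline{P}\circ\overline{U}=\overline{U}\circ\overline{P}$ is vacuous, so the commutation over $k$ alone does not force the shape $\overline{P}(T)=H(T^{p^d})$. The actual constraint on $P$ comes from commuting with $U$ over $\OO_K$, and this is precisely the characteristic-zero information your reduction discards.

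The differentiation step does not repair this. Differentiating $\overline{P}\circ\overline{U}=\overline{U}\circ\overline{P}$ gives $\overline{P}'(\overline{U}(T))\,\overline{U}'(T)=\overline{U}'(\overline{P}(T))\,\overline{P}'(T)$, and since $\overline{U}'(0)\in k^\times$, both sides vanish at $T=0$ to order exactly $\mathrm{ord}(\overline{P}')$; no constraint on $m=\mathrm{wideg}(\overline{P})$ or on whether $p\mid m$ falls out. Comparing lowest-degree coefficients of the undifferentiated identity only yields $u^{m-1}=1$ for $u:=\overline{U}'(0)$, which is far short of the conclusion. Lubin's Theorem 6.3 and Corollary 6.2.1 are theorems about the pair $(P,U)$ over $\OO_K$, with conclusions mod $\mathfrak{m}_K$ but hypotheses and proofs living in characteristic zero. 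The correct move, as the paper makes, is to cite them directly for $(P,U)$ over $\OO_K$ rather than after reducing mod $\mathfrak{m}_K$.
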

\begin{proof}
This is theorem 6.3 and corollary 6.2.1 of \cite{lubin1994nonarchimedean}. 
\end{proof}

Near the end of his paper \cite{lubin1994nonarchimedean}, Lubin remarked that ``Experimental evidence seems to suggest that for an invertible series to commute with a noninvertible series, there must be a formal group somehow in the background.'' This has led some authors to prove some cases (see for instance \cite{Li96}, \cite{Li97}, \cite{Li97b}, \cite{sarkis2005lifting}, \cite{sarkis10}, \cite{sarkis2013galois}, \cite{Ber17}, \cite{specter2018crystalline}) of this ``conjecture'' of Lubin. The various results obtained in this direction can be thought of as cases of the following conjecture:

\begin{conj}
\label{lubinconj}
Let $P, U \in T\cdot \OO_K[\![T]\!]$ such that $P \circ U = U \circ P$, with $P'(0) \in \mathfrak{m}_K$ and $U'(0) \in \OO_K^\times$ not a root of unity, and such that $P(T) \neq 0 \mod \mathfrak{m}_K$. Then there exists a finite extension $E$ of $K$, a formal group $S$ defined over $\OO_E$, endomorphisms of this formal group $P_S$ and $U_S$, and a power series $h(T) \in T\cdot\OO_E[\![T]\!]$ such that $P \circ h = h \circ P_S$ and $U \circ h = h \circ U_S$.
\end{conj}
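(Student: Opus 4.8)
The plan is to reduce Conjecture \ref{lubinconj} to the Lubin--Tate case handled by Theorem \ref{main theo}, and then to verify the two conditions of that theorem. First I would normalize $P$ using Lubin's structure theorem: by Proposition \ref{lubinmaintheo} there are $H \in T\cdot k[\![T]\!]$ with $H'(0) \in k^\times$ and $d \geq 1$ with $P(T) = H(T^{p^d}) \mod \mathfrak{m}_K$. The first substantive step is to produce $V \in T\cdot\OO_K[\![T]\!]$ commuting with $P$ such that $Q := V \circ P$ satisfies $Q(T) = T^{p^d} \mod \mathfrak{m}_K$, i.e.\ condition $(1)$ of Theorem \ref{main theo}: modulo $\mathfrak{m}_K$ one wants $V \equiv H^{-1}$, which is invertible since $H'(0) \in k^\times$, and the point is to choose the lift so that $V\circ P = P\circ V$ holds exactly. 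Here I would use Lubin's description in \cite{lubin1994nonarchimedean} of the group of power series commuting with a fixed noninvertible $P$ (which by hypothesis contains $U$ with $U'(0)$ not a root of unity), together with a successive-approximation argument adjusting $V$ modulo increasing powers of $\mathfrak{m}_K$ while preserving commutation; one also arranges, if necessary after enlarging $d$, that $p^d$ is compatible with $|k_K|$ so that Theorem \ref{theo extensions APF} will apply.

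Once $Q$ is in hand, fix a $Q$-consistent sequence $(\alpha_n)$ with $\alpha_0 \neq 0$ a root of $Q$ and $Q(\alpha_{n+1}) = \alpha_n$, and set $K_\infty = \bigcup_n K(\alpha_n)$. Since $Q(T) = T^{p^d} \mod \mathfrak{m}_K$, Theorem \ref{theo extensions APF} (in the form of Theorem \ref{theo sAPF}) shows $K_\infty/K$ is strictly APF, so the field of norms construction attaches to it a local field of characteristic $p$; the overconvergent series $Q$ provides a lift of the $p^d$-power Frobenius, and by Proposition \ref{prop lifts vbar} there is a canonical $\phi_s$-fixed period $v \in \Atplus$ lifting the associated $\overline{v} \in \Etplus$, on which the whole dynamical system $(P,U,V)$ acts. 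Following Berger's strategy for iterate extensions \cite{Ber14iterate} as carried out in \cite{poyeton2022formal}, I would then descend along Berger's rings $\Btrig$ and use the overconvergence of $Q$ to identify the $(\phi,\Gamma)$-module side: after a finite base change $E/K$, the extension $K_\infty\cdot E/E$ should be the one generated by the torsion points of a formal Lubin--Tate group $S$ over the integers of a subextension $L$, with $Q$ semiconjugate to an endomorphism $Q_S$ and $U$ to $U_S$. Finally, the ``canonical Cohen ring for norms fields'' of Cais--Davis \cite{cais2015canonical} lets one lift this field-of-norms identification to characteristic $0$ and produce $h(T) \in T\OO_E[\![T]\!]$ with $Q \circ h = h \circ Q_S$; the identities $P \circ h = h \circ P_S$ and $U \circ h = h \circ U_S$ then follow from the compatibilities, $P_S$ being the endomorphism of $S$ induced by $P$. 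Since a Lubin--Tate group base-changes to a formal group over $\OO_E$, this is exactly the conclusion of Conjecture \ref{lubinconj}.

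The main obstacle, and the only place the argument is not yet unconditional, is condition $(2)$ of Theorem \ref{main theo}: that one can choose the finite extension $E$ and the sequence $(\alpha_n)$ so that every $E(\alpha_n)/E$ is Galois. The $p$-adic Hodge theory input recognizes $K_\infty$ as a Lubin--Tate extension precisely when it is (after base change) abelian with the expected ramification, and Galois-ness of the layers is what forces that abelian structure via local class field theory. Proving it in general seems to require a genuinely new idea extracting abelianness from the mere existence of a commuting invertible $U$ with $U'(0)$ not a root of unity; the proven cases \cite{sarkis2005lifting,Ber17,specter2018crystalline} obtain it only under strong hypotheses. When $P(T) = T^p \mod \mathfrak{m}_K$ one has $d=1$, $Q = P$, and the layers $E(\alpha_{n+1})/E(\alpha_n)$ have degree $p$; here the rigidity of degree-$p$ ramification together with the action of $U$ can be pushed through to verify $(2)$, which is how I would obtain Theorem \ref{theo second} as the unconditional case. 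I expect the general verification of $(2)$ to be the hard part, and would attack it by analyzing the ramification filtration of $\Gal(K_\infty/E)$ directly through the field of norms and the constraints imposed by $U_S$.
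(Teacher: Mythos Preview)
The statement you are trying to prove is labelled \emph{Conjecture} in the paper, and the paper does not prove it. What the paper does is establish the criterion of Theorem~\ref{main theo} and then verify that criterion in the special case $P(T)\equiv T^p \bmod \mathfrak{m}_K$ (Theorem~\ref{theo second}). So there is no ``paper's own proof'' to compare your proposal against; your write-up is really a proof \emph{plan} for an open problem, and you yourself flag it as conditional.

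Two comments on the plan itself. First, you treat condition~(1) of Theorem~\ref{main theo} as essentially routine, to be handled by lifting $H^{-1}$ to a $V$ commuting with $P$ via successive approximation. The paper does \emph{not} establish~(1) unconditionally: Lemma~\ref{lemma renormPintoQ} produces such a $V$ only \emph{assuming} the formal group already exists, by pulling back the unit part of $[\alpha]$ through the isogeny $h$. Without that, it is not clear that the commutant of $P$ in $T\cdot\OO_K[\![T]\!]$ is large enough to hit the prescribed reduction $H^{-1}$; Lubin's results describe this commutant modulo $\mathfrak{m}_K$ and over $K$, but lifting a given residual series to an \emph{integral} series commuting with $P$ is exactly the kind of statement that tends to require a formal group in the background. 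So condition~(1) is already part of the conjecture, not a preliminary normalization.

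Second, your diagnosis of condition~(2) is accurate and matches the paper's viewpoint: the Galois-ness of the layers is precisely what feeds into Lemma~\ref{lemm H_g(w)} and then into the $p$-adic Hodge theory machinery of \S6, and the paper only knows how to verify it when $P\equiv T^p$, using that the $U$-orbit of $v_{n+1}$ has $p$-power size inside a set of $p$ roots (Proposition in \S8). Your proposed attack via the ramification filtration and field of norms is reasonable but, as you say, would require a genuinely new idea.
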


\begin{rema}
While in many instances of the cases where this conjecture is proven, the formal group is actually defined over $\OO_K$ \cite{sarkis2005lifting,Ber17,specter2018crystalline}, one can produce instances where the formal group is defined over the ring of integers of a finite unramified extension of $\OO_K$ \cite[\S 3]{berger2019nonarchimedean}. The author does not know of a case where the extension $E$ the formal group is defined over is ramified over $K$ so it might be possible that the assumption that $E$ is an unramified extension of $K$ can be enforced.
\end{rema}

\section{Endomorphisms of a formal Lubin-Tate group}
Let $P, U \in T\cdot \OO_K[\![T]\!]$ such that $P \circ U = U \circ P$, with $P'(0) \in \mathfrak{m}_K$ and $U'(0) \in \OO_K^\times$ not a root of unity, and such that $P(T) \neq 0 \mod \mathfrak{m}_K$. In this section, we assume that there exists a finite extension $E$ of $K$, a Lubin-Tate formal group $S$ defined over $\OO_L$ with $E/L/K$ finite, a power series $h \in T\cdot\OO_E[\![T]\!]$ and an endomorphism $P_S$ of $S$ such that $h$ is an isogeny from $P_S$ to $P$. 

\begin{lemm}
\label{lemma renormPintoQ}
There exists $V \in T\cdot\OO_K[\![T]\!]$, commuting with $P$, and an integer $d \geq 1$ such that $Q(T)=T^{p^d} \mod \mathfrak{m}_K$ where $Q=V \circ P$. Moreover, there exists $Q_S$ endomorphism of $S$ such that $h$ is an isogeny from $Q_S$ to $Q$.
\end{lemm}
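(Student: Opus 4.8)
The plan is to produce $Q$ and $V$ by transporting a suitable endomorphism of $S$ along $h$. Since $S$ is a Lubin--Tate formal group over $\OO_L$, we have $\End_{\OO_L}(S)=\OO_L$, so $P_S=[\mu]_S$ for some $\mu\in\OO_L\setminus\{0\}$ (were $\mu=0$ we would get $P\circ h=h\circ P_S=0$, impossible since $P\neq 0\neq h$). Put $q_L=\#k_L$; the Lubin--Tate normalization $[\pi_L]_S(T)\equiv T^{q_L}\bmod\pi_L\OO_L$ gives $\overline{[\pi_L^{\,m}]_S}=T^{q_L^{\,m}}$ in $k_E[\![T]\!]$ for all $m\geq 0$. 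Write $\bar P=H(T^{p^{d_0}})$ with $H'(0)\in k^\times$ and $d_0\geq 1$ (Proposition \ref{lubinmaintheo}). One checks that $\bar h\neq 0$: otherwise, dividing $h$ by the power of $\pi_E$ it is divisible by and reducing $P\circ h=h\circ P_S$ modulo $\mathfrak m$, one would obtain $\bar h_1\circ\overline{P_S}=0$ with both factors nonzero, using $P'(0)\in\mathfrak m_K$. Reducing $P\circ h=h\circ P_S$ modulo $\mathfrak m$ and comparing orders of vanishing then gives $p^{d_0}=q_L^{\,j}$ where $j:=v_L(\mu)$; in particular $j\geq 1$. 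Fix $t\geq 1$ so that $[k_E:\F_p]$ divides $d:=jt[k_L:\F_p]$ (for instance $t=[k_E:\F_p]$) and set $Q_S:=[\pi_L^{\,jt}]_S$, so that $\overline{Q_S}=T^{q_L^{\,jt}}=T^{p^d}$.

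From $P\circ h=h\circ[\mu]_S$ we obtain $P^{\circ t}\circ h=h\circ[\mu]_S^{\circ t}=h\circ[\mu^t]_S$; writing $\mu^t=u_0\pi_L^{\,jt}$ with $u_0\in\OO_L^\times$, we have $[\mu^t]_S=[u_0]_S\circ Q_S$ with $[u_0]_S\in\Aut(S)$ commuting with $[\mu^t]_S$. The crucial step is to descend $[u_0]_S$ along $h$, i.e.\ to produce $A\in T\OO_E[\![T]\!]$ with $A\circ h=h\circ[u_0]_S$. This is immediate when $h$ is invertible ($A=h\circ[u_0]_S\circ h^{-1}$); in general, one constructs $A$ by a successive-approximation argument in the spirit of Section 2, or by appealing to the structure theory of isogenies between such dynamical systems (cf.\ \cite{Li97}), and one checks in the process that $A$ can be chosen to commute with $P$ and to have coefficients in $\OO_K$ (Galois descent along $E/K$, exploiting the uniqueness in the approximation). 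Set then $V:=A^{-1}\circ P^{\circ(t-1)}$ and $Q:=V\circ P=A^{-1}\circ P^{\circ t}$.

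It remains to verify the three assertions. First, the relations $A\circ h=h\circ[u_0]_S$, $P\circ h=h\circ[\mu]_S$ and $[u_0]_S\circ[\mu]_S=[\mu]_S\circ[u_0]_S$ give $(A\circ P)\circ h=(P\circ A)\circ h$; since composition on the right by the nonzero series $h$ (with $h(0)=0$) is injective, $A\circ P=P\circ A$, hence $A^{-1}$, and therefore $V$, commutes with $P$. Second, $Q\circ h=A^{-1}\circ P^{\circ t}\circ h=A^{-1}\circ h\circ[\mu^t]_S=h\circ[u_0]_S^{-1}\circ[\mu^t]_S=h\circ Q_S$, so $Q_S$ is an endomorphism of $S$ and $h$ is an isogeny from $Q_S$ to $Q$. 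Third, reducing $Q\circ h=h\circ Q_S$ modulo $\mathfrak m$ yields $\bar Q\circ\bar h=\bar h\circ T^{p^d}$; as $\bar h\in k_E[\![T]\!]$ and $[k_E:\F_p]\mid d$, we have $\bar h(T^{p^d})=\bar h(T)^{p^d}$, so the power series $F(X):=\bar Q(X)-X^{p^d}$ satisfies $F(\bar h(T))=0$, which by comparing lowest-order terms forces $F=0$; thus $Q\equiv T^{p^d}\bmod\mathfrak m_K$.

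I expect the descent of the auxiliary automorphism $[u_0]_S$ along $h$, together with the rationality over $\OO_K$ (rather than just $\OO_E$) of the resulting $A$, hence of $V$, to be the main obstacle: when $h$ is not invertible one cannot merely conjugate, and one must argue either through a Lubin-style successive-approximation construction or through the structure of isogenies between these dynamical systems.
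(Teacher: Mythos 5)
Your proposal follows essentially the same route as the paper: identify $P_S=[\mu]_S$ with $\mu\in\mathfrak m_L$, split off the unit factor of $\mu$, transport that unit along $h$ to get the correcting series, and read off the reduction of $Q$ from the reduction of $[\varpi_L^{\bullet}]_S$. The paper writes $\alpha=c\varpi_L^d$, asserts that to $[c^{-1}]$ there corresponds an invertible $V$ commuting with $P$ with $h\circ[c^{-1}]=V\circ h$, and then reads off $Q\equiv T^{\mathrm{Card}(k_L)^d}$ modulo $\mathfrak m_L$ by an $\overline h(T)^{q_L^d}=\overline h(T^{q_L^d})$ identity; you pass to a $t$-th iterate so that $[k_E:\F_p]\mid d$, which is a genuinely useful precaution, since the Frobenius identity $\overline h(T^{q})=\overline h(T)^{q}$ that the paper uses requires $k_E\subseteq\F_{q}$, which is not automatic for $E/L$, and is handled cleanly by your choice of $t$ (the paper's $d$ is also not, as written, the exponent $p^d$ asked for in the lemma, while yours is).

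The one place where your argument is not complete is exactly the place you flag: the existence of $A\in T\OO_K[\![T]\!]$, commuting with $P$, with $A\circ h=h\circ[u_0]_S$. This is not a side remark but the heart of the lemma, and you should know that the paper does not fill it in either; it simply states "for any $V_S$ invertible series commuting with $P_S$, there corresponds an invertible power series $V$ commuting with $P$" and moreover silently promotes this $V$ to $\OO_K[\![T]\!]$ as demanded by the lemma statement. Your Galois-descent sketch for $\OO_K$-rationality is not obviously correct as stated: for $h$ with $h'(0)\neq 0$ one gets $A'(0)=u_0\in\OO_L^\times$, which need not lie in $\OO_K$, so the conjugates $\sigma(A)$ need not equal $A$ and uniqueness alone will not force $A\in\OO_K[\![T]\!]$. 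So be aware that both the transport along a non-invertible $h$ and the $\OO_K$-rationality of $V$ are substantive points; they are handled in the same implicit way in the paper's proof, and your honesty in naming them is appropriate. Apart from that, your verification of the commutation with $P$, of $Q\circ h=h\circ Q_S$, and of $\overline Q=T^{p^d}$ is correct and in fact slightly tighter than the paper's reduction argument.
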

\begin{proof}
First note that for any $V_S$ invertible series commuting with $P_S$, there corresponds an invertible power series $V$ commuting with $P$. Since $S$ is a formal Lubin-Tate group over $\OO_L$, $P_S$ corresponds to the multiplication by an element $\alpha \in \mathfrak{m}_L$. Let $[\varpi_L]$ denote the multiplication by $\varpi_L$ on $S$, a uniformizer of $\OO_L$ such that $[\varpi_L](T) = T^{\mathrm{Card}(k_L)} \mod \mathfrak{m}_L$ (we can find such a uniformizer since $S$ is a Lubin-Tate formal group defined over $\OO_L$). Since $\alpha \in \mathfrak{m}_L$, there exists $c \in \OO_L^\times$ and an integer $d \geq 1$ such that $\alpha = c \cdot \varpi_L^d$. In particular, we have $\mathrm{wideg([\alpha])} = \mathrm{wideg(P)} = \mathrm{wideg([\varpi_L^d])} = \mathrm{Card}(k_L)^d$.

We let $V$ denote the power series commuting with $P$ such that $h \circ [c^{-1}] = V \circ h$. We then have that $h \circ [c^{-1}] \circ [\alpha] = V \circ P \circ h$, and that $h \circ [c^{-1}] \circ [\alpha] = h \circ [\varpi_L^d]$, so that $h$ is an isogeny from $[\varpi_L^d]$ to $Q:=V \circ P$. Reducing modulo $\mathfrak{m}_L$, we get that 
$$h(T)^{\mathrm{Card}(k_L)^d} = h(T^{\mathrm{Card}(k_L)^d}) = h \circ Q \mod \mathfrak{m}_L$$
so that $Q = T^{\mathrm{Card}(k_L)^d} = T^{\mathrm{wideg(P)}} \mod \mathfrak{m}_L$.
\end{proof}

Let $(u_n)_{n \in \N}$ be a sequence of elements of $\Qpbar$ such that $u_0 \neq 0$ is a root of $Q_S$, and $Q_S(u_{n+1})=u_n$. In Lubin's terminology (see the definition on page 329 of \cite{lubin1994nonarchimedean}), the sequence $(v_n)$ is called a $Q_S$-consistent sequence. Let $E_n=E(u_n)$ and let $E_\infty = \bigcup_n E_n$. Then for all $n \geq 1$, the extensions $E_n/E$ are Galois. 

Let $Q$ as in lemma \ref{lemma renormPintoQ} and let $v_n:=h(u_n)$.

\begin{lemm}
\label{lemma endo Lubin implies Q-consistent}
The sequence $(v_n)_{n \in \N}$ is $Q$-consistent, and the extensions $E(v_n)/E$ are Galois for all $n \geq 1$.
\end{lemm}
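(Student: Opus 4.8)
The plan is to read the $Q$-consistency relations directly off the intertwining identity $Q\circ h=h\circ Q_S$ furnished by Lemma~\ref{lemma renormPintoQ}, and to obtain the Galois property from the fact that each $v_n$ lies in a Lubin--Tate tower over $E$. First I would note that $u_n$ is a torsion point of $S$: since $Q_S=[\varpi_L^{\,d}]$ and $Q_S^{\,\circ(n+1)}(u_n)=Q_S(u_0)=0$, one has $u_n\in S[\varpi_L^{\,d(n+1)}]\subseteq\mathfrak{m}_{\Cp}$; in particular $u_n$ has positive valuation, so $h\in T\OO_E[\![T]\!]$ converges at $u_n$ and $v_n=h(u_n)$ lies in the complete field $E_n=E(u_n)$. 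Applying $Q\circ h=h\circ Q_S$ then gives $Q(v_{n+1})=h\bigl(Q_S(u_{n+1})\bigr)=h(u_n)=v_n$ for all $n\ge 0$, and $Q(v_0)=h\bigl(Q_S(u_0)\bigr)=h(0)=0$, so $v_0$ is a root of $Q$.

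To see that $v_0\ne 0$, I would argue as follows: the $u_n$ are pairwise distinct (an immediate consequence of the relations $Q_S(u_{n+1})=u_n$, $Q_S(u_0)=0$ and $u_0\ne 0$), whereas $h$, being a nonzero element of $T\OO_E[\![T]\!]$, has only finitely many zeros in $\mathfrak{m}_{\Cp}$; hence $v_n\ne 0$ for all but finitely many $n$, and after replacing $(v_n)_n$ by $(v_{n+n_0})_n$ for $n_0$ minimal with $v_{n_0}\ne 0$ (note $Q(v_{n_0})=v_{n_0-1}=0$, so the new first term is still a root of $Q$) one obtains a genuine $Q$-consistent sequence --- and in the situation of Theorem~\ref{main theo} one checks directly that $n_0=0$. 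For the Galois property, $L(u_n)$ lies in the field generated over $L$ by $S[\varpi_L^{\,d(n+1)}]$, which is abelian over $L$ by Lubin--Tate theory; since $L\subseteq E$, the compositum $E_n=E\cdot L(u_n)$ is abelian over $E$, and therefore its subextension $E(v_n)/E$ is Galois for every $n$.

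The manipulations with the intertwining relation are routine, and the Galois statement is a formal consequence of the abelianness of Lubin--Tate extensions; the only point that genuinely requires a word of care is the non-vanishing of $v_0$, which is handled by the (harmless) reindexing above or, if one insists on the sequence $\bigl(h(u_n)\bigr)_n$ itself, by showing that the isogeny $h$ does not annihilate the nonzero root $u_0$ of $Q_S$.
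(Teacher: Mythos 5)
Your proof is correct and follows essentially the same route as the paper: read the $Q$-consistency off the intertwining relation $Q\circ h = h\circ Q_S$ furnished by Lemma~\ref{lemma renormPintoQ}, and deduce the Galois property from the abelianness of the Lubin--Tate tower over $L$ via $E \subset E(v_n) \subset E_n$. You spell out two things the paper treats as immediate --- why $v_n\in E_n$ (convergence of $h$ at the torsion point $u_n$) and why $E_n/E$ is abelian (base change of the abelian extension $L\bigl(S[\varpi_L^{d(n+1)}]\bigr)/L$ to $E$, and passage to the subfield $E(u_n)$) --- both of which are fine.

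You also flag a real subtlety that the paper's one-line proof glosses over: nothing in the hypotheses \emph{a priori} prevents the isogeny $h$ from vanishing at $u_0$, in which case $v_0=0$ and $(v_n)$ would fail to be $Q$-consistent in the sense required by Theorem~\ref{main theo} (which demands $\alpha_0\neq 0$). Your fix --- observe that the $u_n$ are pairwise distinct while a nonzero $h\in T\OO_E[\![T]\!]$ has only finitely many zeros in $\mathfrak{m}_{\Cp}$, then re-index at the least $n_0$ with $v_{n_0}\neq 0$, noting $Q(v_{n_0})=v_{n_0-1}=0$ --- is correct and entirely harmless, since the Galois conclusion is unaffected by the shift and the re-indexed sequence is exactly what condition~(2) of Theorem~\ref{main theo} asks for. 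The one claim I would drop is the parenthetical assertion that in the situation of the main theorem ``one checks directly that $n_0=0$'': this is not obviously true (it would require knowing $u_0\notin\ker h$, and $\ker h$ need not be trivial), and it is not needed once the re-indexing is available.
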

\begin{proof}
We know that $E_n/E$ are Galois abelian extensions. Since $E \subset E(v_n) \subset E_n$, this implies that the extensions $E(v_n)/E$ are Galois. The fact that the sequence $(v_n)_{n \in \N}$ is $Q$-consistent follows directly from the fact that $h$ is an isogeny from $Q_S$ to $Q$. 
\end{proof}

\section{Embeddings into rings of periods}
Let $L:=K_{n_0}$ with $n_0$ as in proposition \ref{prop K(vn) tot ramified}. Since $P(T) = T^{p^d} \mod \mathfrak{m}_K$, there exists $m \geq 1$ such that $P^{\circ m}$ acts trivially on $k_L$, so that the degree $r$ of $Q$ is a power of the cardinal of $k_L$, and we let $Q:= P^{\circ m}$ after having chosen such an $m$. We let $w_0 = v_{n_0}$ and $(w_n)$ be a sequence extracted from $(v_n)$ such that $Q(w_{n+1})=w_n$. Let $L' = \Q_p^{\mathrm{unr}} \cap L$ be the maximal unramified extension of $\Qp$ inside $L$, and let $\Atplus := \OO_L \otimes_{\OO_{L'}}W(\Etplus)$. 

Since $K_\infty/L$ is strictly APF, there exists by \cite[4.2.2.1]{Win83} a constant $c = c(K_{\infty}/L) > 0$ such that for all $F \subset F'$ finite subextensions of $K_\infty/L$, and for all $x \in \mathcal{O}_{F'}$, we have 
$$v_L(\frac{N_{F'/F}(x)}{x^{[F':F]}}-1) \geq c.$$
We can always assume that $c \leq v_L(p)/(p-1)$ and we do so in what follows. By §2.1 and §4.2 of \cite{Win83}, there is a canonical $\G_L$-equivariant embedding $\iota_L : A_L(K_\infty) \hookrightarrow \Etplus$, where $A_L(K_\infty)$ is the ring of integers of $X_L(K_\infty)$, the field of norms of $K_\infty/L$. We can extend this embedding into a $\G_L$-equivariant embedding $X_L(K_\infty) \hookrightarrow \Et$, and we note $\E_K$ its image.

It will also be convenient to have the following interpretation for $\Etplus$: 
$$\tilde{\bf{E}}^+ = \varprojlim\limits_{x \to x^p} \mathcal{O}_{\C_p} = \{(x^{(0)},x^{(1)},\dots) \in \mathcal{O}_{\C_p}^{\N}~: (x^{(n+1)})^p=x^{(n)}\}.$$
To see that this definition coincides with the one given in \S 1, we refer to \cite[Prop. 4.3.1]{BrinonConrad}.

Note that, even though $\E_K$ depends on $K_\infty$ rather than on $L$, it is still sensitive to $L$:
\begin{prop}
\label{extension corps des normes bas depend de K}
Let $K'$ be a finite extension of $L$ contained in $K_\infty$. Let $K_1$ (resp. $K_1'$) be the maximal tamely ramified extension of $K_\infty/L$ (resp. $K_\infty/K'$). Then as subfields of $\Et$, $\E_{K'}$ is a purely inseparable extension of $\E_K$ of degree $[K_1':K_1]$. In particular, $\E_{K_1}=\E_K$.
\end{prop}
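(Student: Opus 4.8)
The plan is to understand precisely how the image in $\Et$ of the field of norms changes when the base field is enlarged inside $K_\infty$, by comparing the canonical embeddings attached to different base fields. I would start with some preliminaries: for any finite $L\subseteq M\subseteq K_\infty$ the extension $K_\infty/M$ is again strictly APF (it is a ``tail'' of $K_\infty/L$, cf. \cite[\S 1]{Win83}), so that $\E_M:=\iota_M(X_M(K_\infty))\subseteq\Et$ is defined; in particular $\E_K=\E_L$ and $\E_{K'}$ are defined, and each $\E_M$ is a complete discretely valued subfield of $\Et$ of characteristic $p$ with residue field $k_{K_\infty}$, which is perfect. One also records that $K_1\subseteq K_1'$ — indeed $K_1\cdot K'/K'$ is tamely ramified (base change of the tame extension $K_1/L$), so $K_1\subseteq K_1\cdot K'\subseteq K_1'$ — and that $K_1'/K_1$ is totally wildly ramified, being a subextension of $K_\infty/K_1$, which has no nontrivial tamely ramified subextension (any such would be tame over $L$ and hence lie in $K_1$ by maximality). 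Thus $[K_1':K_1]$ is a power of $p$, say $p^a$.

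The heart of the argument is the following comparison of embeddings, which I would establish by unwinding the construction of $\iota_M$ in \cite[\S\S 2.1, 4.2]{Win83} (see also \cite[\S II]{cherbonnier1998representations}). For finite $L\subseteq M\subseteq M'\subseteq K_\infty$, restriction of norm-compatible sequences gives a ring isomorphism $\mathrm{res}\colon X_M(K_\infty)\to X_{M'}(K_\infty)$ (injectivity is immediate; for surjectivity one defines, for $M\subseteq F\subseteq K_\infty$ with $M'\not\subseteq F$, the missing component $N_{FM'/F}(b_{FM'})$ and checks norm-compatibility), and moreover
$$\iota_{M'}\circ\mathrm{res}\ =\ \phi_p^{-a(M'/M)}\circ\iota_M ,$$
where $\phi_p$ is the $p$-power Frobenius of $\Et$ (so that $\phi_q=\phi_p^{\,h}$) and $p^{a(M'/M)}$ is the wild part of $M'/M$. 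It suffices to check two cases: (i) if $M'/M$ is tamely ramified then $a(M'/M)=0$ — the tame layers only re-index the $\theta\circ\phi_p^{-n}$-components of the image in $\Etplus$ through bijections of $k_{K_\infty}$ and introduce no $p$-th roots; and (ii) if $M'/M$ is totally wildly ramified of degree $p^a$ then $\iota_{M'}\circ\mathrm{res}=\phi_p^{-a}\circ\iota_M$ — up to errors of valuation $\geq c$ the norm $N_{M'/M}$ is the $p^a$-th power map, which shifts the index in $\Etplus=\varprojlim\limits_{y\mapsto y^p}\OO_{\Cp}/\mathfrak{a}_{\Cp}^c$ by $a$. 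The general case then follows by factoring $M'/M$ through its maximal tamely ramified subextension and composing.

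Granting this, the proposition is formal. Applying the comparison to the pairs $(L,K_1)$ and $(K',K_1')$ (tamely ramified) and to $(K_1,K_1')$ (totally wildly ramified of degree $p^a$), and using that $\mathrm{res}$ is surjective, one gets $\E_K=\E_{K_1}$, $\E_{K'}=\E_{K_1'}$ and $\E_{K_1'}=\phi_p^{-a}(\E_{K_1})$, hence
$$\E_{K'}=\phi_p^{-a}(\E_K)=\E_K^{1/p^a}.$$
Since $\E_K$ is stable under $p^a$-th powers, $\E_K\subseteq\E_K^{1/p^a}$ and $(\E_K^{1/p^a})^{p^a}\subseteq\E_K$, so $\E_{K'}/\E_K$ is purely inseparable; and applying the automorphism $\phi_p^{a}$ of $\Et$, $[\E_{K'}:\E_K]=[\E_K:\phi_p^{a}(\E_K)]=[\E_K:\E_K^{p^a}]$, which is $p^a=[K_1':K_1]$ because $\E_K\cong k_{K_\infty}(\!(t)\!)$ by the Cohen structure theorem and $k_{K_\infty}$ is perfect, so $\E_K^{p^a}\cong k_{K_\infty}(\!(t^{p^a})\!)$ has index $p^a$. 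In particular, taking $K'=K_1$ (so $a=0$) gives $\E_{K_1}=\E_K$.

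The step I expect to be the main obstacle is the comparison formula $\iota_{M'}\circ\mathrm{res}=\phi_p^{-a(M'/M)}\circ\iota_M$, that is, the careful bookkeeping inside Wintenberger's construction showing that passing up a tamely ramified layer leaves the image in $\Et$ unchanged while passing up a totally wildly ramified layer of degree $p^a$ twists it by $\phi_p^{-a}$. Once this is secured the rest is routine; note moreover that in the situation actually used in this paper one has $K_1=L$ and $K_1'=K'$, so that only case (ii) is needed.
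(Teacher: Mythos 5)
The paper does not argue this proposition at all: it simply cites \cite[Prop.~4.14]{cais2015canonical}. Your proposal therefore differs not in strategy but in that it supplies a direct proof of the cited result from Wintenberger's construction. The outline is sound: reducing to the comparison identity $\iota_{M'}\circ\mathrm{res}=\phi_p^{-a(M'/M)}\circ\iota_M$ across a single layer, handled in the two cases tame (no shift) and totally wild of degree $p^a$ (shift by $\phi_p^{-a}$), and then deducing $\E_{K'}=\phi_p^{-a}(\E_K)=\E_K^{1/p^a}$ with $p^a=[K_1':K_1]$ via the Cohen structure of $\E_K$ over a perfect residue field. You correctly flag the comparison formula as the only nontrivial step; this is exactly the content that Cais--Davis (and, underneath, Wintenberger \cite[\S\S 2.1, 4.2]{Win83}) carry out, so the hedging is appropriate rather than a gap. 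Your closing observation is also accurate and useful: in the paper's situation $K_\infty/L$ is a union of totally (wildly) ramified $p$-power extensions, hence $K_1=L$ and $K_1'=K'$, so only the totally-wild half of the comparison is ever invoked, and the degree $[K_1':K_1]$ is just $[K':L]$. The trade-off is the usual one: the paper gains brevity by outsourcing to \cite{cais2015canonical}, while your version makes the Frobenius-twist mechanism that produces the inseparability explicit and self-contained. One small caution if you were to write it out in full: the precise sign/direction of the Frobenius shift and the claim that tame layers induce no shift depend on the normalization of $\iota_M$ in \cite{Win83}, so that bookkeeping would need to be done against a fixed reference rather than by analogy; but the direction you chose ($\phi_p^{-a}$ so that $\E_K\subseteq\E_{K'}$) is the one consistent with the statement.
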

\begin{proof}
See \cite[Prop. 4.14]{cais2015canonical}.
\end{proof}

The sequence $(w_n)$ defines an element $\overline{w} \in \Etplus$.

\begin{prop}
\label{prop lifts wbar}
There exists a unique $w \in \Atplus$ lifting $\overline{w}$ such that $Q(w) = \phi_r(w)$. Moreover, we have that $\theta \circ \phi_r^{-n}(w) = w_n$. 
\end{prop}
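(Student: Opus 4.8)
The plan is to argue exactly as in the proof of Proposition~\ref{prop lifts vbar}, now with $Q$, $\phi_r$ and the ring $\Atplus = \OO_L \otimes_{\OO_{L'}}W(\Etplus)$ in the roles of $P$, $\phi_s$ and $\Atplus$ there. The inputs I would use are: $Q$ is an honest power series in $T\OO_L[\![T]\!]$ (recall $Q = P^{\circ m}$) with $Q(T) = T^r \mod \mathfrak{m}_L$ and $r$ a power of $p$ (indeed a power of $\#k_L$); the Frobenius $\phi_r$ is an automorphism of $\Atplus$ acting $\OO_L$-linearly --- being a power of $\#k_L$, it is the identity on $W(k_L) \supseteq \OO_{L'}$, so that $\phi_r$ commutes with $Q$ on $\Atplus$; and the compatibility $\theta \circ \phi_r^{-n}(x) = x_n \mod \mathfrak{a}_{\Cp}^c$ for $x \in \Atplus$ lifting $\overline{x} = (x_n) \in \Etplus$, recalled in Section~1.

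For existence I would run the construction of \cite[Rem.~7.16]{cais2015canonical}, extended in the routine way from the absolute Frobenius to its power $\phi_r$ and from the usual Witt vectors to their $\OO_L$-linear variant. Concretely, consider the map $f \colon x \mapsto \phi_r^{-1}(Q(x))$ on the $\pi_L$-adically complete coset $Z = [\overline{w}] + \pi_L\Atplus$ of elements of $\Atplus$ reducing to $\overline{w}$; a fixed point of $f$ is precisely an element $w \in Z$ with $Q(w) = \phi_r(w)$. Since $Q(T) = T^r \mod \mathfrak{m}_L$ with $r \geq p$, one has $Q'(T) \in \mathfrak{m}_L[\![T]\!]$, so for $x, y \in Z$ the difference $Q(x) - Q(y)$ lies in $\mathfrak{m}_L(x - y)\Atplus + (x - y)^2\Atplus$; as $\phi_r^{-1}$ preserves the $\pi_L$-adic filtration, $f$ is a contraction on $Z$, hence $f^{\circ k}([\overline{w}]) = \phi_r^{-k}(Q^{\circ k}([\overline{w}]))$ converges to its unique fixed point $w = \lim_{k \to +\infty} \phi_r^{-k}(Q^{\circ k}([\overline{w}]))$, which lies in $Z$, i.e.\ lifts $\overline{w}$, and satisfies $Q(w) = \phi_r(w)$. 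To obtain $\theta \circ \phi_r^{-n}(w) = w_n$ as an equality in $\OO_{\Cp}$, and not merely a congruence mod $\mathfrak{a}_{\Cp}^c$, I would again appeal to the precise form of \cite[Rem.~7.16]{cais2015canonical}, which produces the lift with image the coherent system $(w_n)_n$ under $(\theta \circ \phi_r^{-n})_n \colon \Atplus \to \varprojlim_n \OO_{\Cp}$, using $Q(w_{n+1}) = w_n$ and the surjectivity of the maps $\theta \circ \phi_r^{-n}$.

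Uniqueness is then immediate: any $w' \in \Atplus$ lifting $\overline{w}$ with $Q(w') = \phi_r(w')$ lies in $Z$ and is a fixed point of the contraction $f$, hence coincides with $w$.

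I do not anticipate a genuine obstacle here; the only point to watch is that all the estimates take place inside $\Atplus$ rather than in $\At$ or $\At^{\dagger}$, but this is automatic since $Q$ has Weierstrass degree $r$ and integral coefficients, so no overconvergence subtlety of the kind met in the proof that $u \in \At^{\dagger}$ arises, and the whole contraction argument stays within $\Atplus$.
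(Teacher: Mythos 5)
Your proposal matches the paper's own argument: the paper simply says this is the same proof as Proposition~\ref{prop lifts vbar}, which in turn invokes \cite[Rem.~7.16]{cais2015canonical} (extended to a power of Frobenius) for existence and the identification $\theta\circ\phi^{-n}$, and a contraction-mapping argument on the set of lifts of $\overline{w}$ for uniqueness, exactly as you do. You supply more detail than the paper (e.g.\ the explicit estimate $Q'(T)\in\mathfrak{m}_L[\![T]\!]$ making $x\mapsto\phi_r^{-1}(Q(x))$ a contraction on $[\overline{w}]+\pi_L\Atplus$), but the method is the same.
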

\begin{proof}
This is the same proof as for the proposition \ref{prop lifts vbar}. 
\end{proof}

For all $k \geq 0$, we let 
$$R_k:=\{ x \in \Atplus, \theta \circ \phi_d^{-n}(x) \in \mathcal{O}_{L_{n+k}} \textrm{ for all } n \geq 1 \}.$$

\begin{prop}
\label{existsz_k}
For all $k \geq 0$, there exists $z_k \in R_k$ such that $R_k = \OO_L[\![z_k]\!]$.
\end{prop}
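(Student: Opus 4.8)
The plan is to proceed by induction on $k$, constructing the generators $z_k$ compatibly. The case $k=0$ is the key base case: here one wants $z_0 \in R_0$ with $R_0 = \OO_L[\![z_0]\!]$, where $R_0$ consists of those $x \in \Atplus$ with $\theta \circ \phi_d^{-n}(x) \in \OO_{L_n}$ for all $n \geq 1$. The natural candidate is the element $w$ produced by Proposition \ref{prop lifts wbar}, which satisfies $\theta \circ \phi_r^{-n}(w) = w_n \in \OO_{L_n}$ and $Q(w) = \phi_r(w)$; one should check $w \in R_0$ (using that the $L_{n+k}$ absorb the $\phi_d$-versus-$\phi_r$ discrepancy, since $Q = P^{\circ m}$ and $r$ is a power of $p$) and then show it generates. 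For the generation statement I would use the theory of the field of norms: the residue ring $R_0 \bmod \pi_L$ is the ring of integers $A_L(K_\infty)$ of the field of norms $X_L(K_\infty)$, via the embedding $\iota_L$, and $\overline{w}$ is (up to the purely inseparable/tame subtleties of Proposition \ref{extension corps des normes bas depend de K}) a uniformizer of $\E_K = \E_{K_1}$, hence $A_L(K_\infty) = k_L[\![\overline{w}]\!]$ after passing to $K_1$; one then lifts this to the statement $R_0 = \OO_L[\![w]\!]$ by a successive-approximation / completeness argument, exactly as in the Cais--Davis setup (cf. \cite[Rem. 7.16, §7]{cais2015canonical}).

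For the inductive step, assuming $R_k = \OO_L[\![z_k]\!]$, I would analyze the extension $L_{n+k+1}/L_{n+k}$, which by Proposition \ref{prop K(vn) tot ramified} is totally ramified of degree $r$ for $n$ large, with Eisenstein minimal polynomial; the idea is that a well-chosen element $w'$ refining $w$ — a compatible system of norms of uniformizers, or a $\phi$-fixed lift adapted to the shifted tower $(w_{n+k+1})$ — will play for $R_{k+1}$ the role that $z_k$ plays for $R_k$, and that $R_{k+1}$ is free of rank $r$ over $R_k$ with $R_{k+1} = R_k[\![z_{k+1}]\!]$ reducing to $R_k[\![z_{k+1}]\!]/\pi_L = A_L(K_\infty)$ again via the field-of-norms dictionary (the shift in $k$ only reindexes the tower). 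Composing, $R_{k+1} = \OO_L[\![z_k]\!][\![z_{k+1}]\!]$, but since $z_k$ is a power series in $z_{k+1}$ (as $R_k \subset R_{k+1}$ and both reduce to power series rings over $k_L$ in the respective uniformizers, with $\overline{z_k}$ a power of $\overline{z_{k+1}}$ up to unit and purely inseparable factors controlled by Proposition \ref{extension corps des normes bas depend de K}), this collapses to $R_{k+1} = \OO_L[\![z_{k+1}]\!]$.

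The main obstacle I expect is the passage from the mod-$\pi_L$ statement (which is essentially Wintenberger's field-of-norms theory plus Proposition \ref{extension corps des normes bas depend de K}) to the integral statement $R_k = \OO_L[\![z_k]\!]$: one needs $R_k$ to be $\pi_L$-adically complete and separated, and one needs that $\theta \circ \phi_d^{-n}$ landing in $\OO_{L_{n+k}}$ is \emph{exactly} cut out, not just approximately, so that no "extra" elements appear beyond $\OO_L[\![z_k]\!]$. This is where the strict APF hypothesis on $K_\infty/L$ (Theorem \ref{theo sAPF}) and the Tate-traces / ramification constant $c = c(K_\infty/L)$ enter: they let one produce, by a convergent sequence of successive approximations as in \cite[§7]{cais2015canonical}, an honest preimage in $\OO_L[\![z_k]\!]$ of any element of $R_k$ whose reduction lies in $A_L(K_\infty) = k_L[\![\overline{z_k}]\!]$. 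The remaining subtlety — keeping track of the purely inseparable degree $[K_1':K_1]$ and the tame part so that one really gets a single generator $z_k$ rather than a finite module — is handled exactly as in Proposition \ref{extension corps des normes bas depend de K}, and I would invoke that proposition rather than redo the computation.
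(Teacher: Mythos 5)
There is a genuine gap in your base case. You propose to take $z_0 = w$ (equivalently, to use $\overline{w}$ as the generator of $R_0/\pi_L R_0$), but $\overline{w}$ is in general \emph{not} a uniformizer of the field of norms. Proposition \ref{prop K(vn) tot ramified} only gives $v_{K_n}(v_n) = d$ for some fixed integer $d \geq 1$, and $d$ can perfectly well be $> 1$ and even divisible by $p$; the ``tame/purely inseparable subtleties'' of Proposition \ref{extension corps des normes bas depend de K} that you invoke can absorb the prime-to-$p$ part of $d$ but not its $p$-part. So the claim $A_L(K_\infty) = k_L[\![\overline{w}]\!]$ fails whenever $p \mid d$, and your inductive step inherits the same problem. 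The paper sidesteps this entirely: after showing that $R_k/\pi_L R_k$ embeds into $\Etplus$ (by the elementary observation that $\pi_L \Atplus \cap R_k = \pi_L R_k$, which follows since $\theta\circ\phi_r^{-n}$ maps into $\OO_{\Cp}$ and $\frac{1}{\pi_L}\OO_{L_{n+k}} \cap \OO_{\Cp} = \OO_{L_{n+k}}$) and lands inside the copy of the field-of-norms ring of integers, which carries a discrete valuation, it simply picks $\overline{z_k}$ to be an element of \emph{minimal positive valuation} in $R_k/\pi_L R_k$ — the element $w$ is used only to witness that this set is nonempty, not as the generator. Then $R_k/\pi_L R_k = k_L[\![\overline{z_k}]\!]$ and completeness lifts this to $R_k = \OO_L[\![z_k]\!]$.

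A secondary point: your induction on $k$ is not needed. The paper's argument is uniform in $k$ — the same three steps (completeness, injectivity of the mod-$\pi_L$ map, existence of a minimal-valuation element) apply verbatim to each $R_k$. The relation between the $z_k$ for varying $k$ (that eventually one can take $z_{k+1} = \phi_r^{-1}(z_k)$) is the content of the \emph{next} proposition, \ref{existz}, and requires a separate valuation-stabilization argument, not an inductive bookkeeping inside the present proof.
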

\begin{proof}
Note that for all $k \geq 0$, $R_k$ is an $\OO_L$-algebra, separated and complete for the $\pi_L$-aidc topology, where $\pi_L$ is a uniformizer of $\OO_L$. If $x \in R_k$, then its image in $\Etplus$ belongs to $\lim\limits_{x \mapsto x^r}\OO_{L_{n+k}}/\mathfrak{a}_{L_{n+k}}^c$.  

Note that the natural map $R_k/\pi_LR_k \to \Etplus$ is injective. To prove this, we need to prove that $\pi_L\Atplus \cap R_k = \pi_LR_k$. Let $x \in R_k \cap \pi_L\Atplus$ and let $y \in \Atplus$ be such that $x=\pi_Ly$. Then since $x \in R_k$ we have that $\theta \circ \phi_r^{-n}(x) \in \mathcal{O}_{L_{n+k}}$ and thus $\theta \circ \phi_r^{-n}(y) \in \frac{1}{\pi_L}\mathcal{O}_{L_{n+k}}$. But since $\theta \circ \phi_r^{-n}$ maps $\Atplus$ into $\OO_{\Cp}$ we get that $\theta \circ \phi_r^{-n}(y) \in L_{n+k} \cap \OO_{\Cp} = \OO_{L_{n+k}}$. Therefore the natural map $R_k/\pi_LR_k \to \Etplus$ is injective.

We know by the theory of field of norms that $\lim\limits_{x \mapsto x^r}\OO_{L_n}/\mathfrak{a}_{L_n}^c \simeq k_L[\![\overline{v}]\!]$ for some $\overline{v} \in \Etplus$, so that the valuation induced by $v_L$ on $\Etplus$ is discrete on $R/\pi_LR$.
Let $\overline{u} \in R/\pi_LR$ be an element of minimal valuation within
$$\{x \in R/\pi_LR, v_L(x) > 0\}.$$

Since the valuation on $R/\pi_LR$ is discrete, and since this set is nonempty because it contains the image of the element $w$ given by proposition \ref{prop lifts wbar}, such an element $\overline{u}$ exists, and we have $R/\pi_LR = k_L[\![\overline{u}]\!]$, so that $R=\OO_L[\![u]\!]$ for $u \in R$ lifting $\overline{u}$ since $R$ is separated and complete for the $\pi_L$-adic topology.
\end{proof}

\begin{prop}
\label{existz}
There exists $k_0 \geq 0$ such that, for all $k \geq k_0$, we can take $z_{k+1} = \phi_r^{-1}(z_k)$ and we let $z=z_{k_0}$.
\end{prop}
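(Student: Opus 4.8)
The statement of Proposition~\ref{existz} asserts the existence of a stable index $k_0$ beyond which the generators $z_k$ of the rings $R_k = \OO_L[\![z_k]\!]$ can be chosen compatibly under $\phi_r^{-1}$. The plan is to exploit the fact that $\phi_r$ shifts the ``level'' of the rings $R_k$. First I would observe that $\phi_r^{-1}$ maps $R_k$ into $R_{k+1}$: indeed, if $x \in R_k$, so $\theta \circ \phi_r^{-n}(x) \in \OO_{L_{n+k}}$ for all $n \geq 1$, then $\theta \circ \phi_r^{-n}(\phi_r^{-1}(x)) = \theta \circ \phi_r^{-(n+1)}(x) \in \OO_{L_{n+1+k}} = \OO_{L_{n+(k+1)}}$, so $\phi_r^{-1}(x) \in R_{k+1}$. (One must first check that $\phi_r^{-1}(x)$ still lands in $\Atplus$, which is true since $\phi_r$ is bijective on $\Atplus$ and the subring $R_k$ consists of elements whose images under $\theta \circ \phi_r^{-n}$ avoid denominators; alternatively one works with $\phi_r$ on $\Et^+$ and lifts.) Thus $\phi_r^{-1}(R_k) \subseteq R_{k+1}$, so $\OO_L[\![\phi_r^{-1}(z_k)]\!] \subseteq R_{k+1} = \OO_L[\![z_{k+1}]\!]$.

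**The key step.** To conclude that we may take $z_{k+1} = \phi_r^{-1}(z_k)$ we need the reverse inclusion, i.e. that $\phi_r^{-1}(z_k)$ generates all of $R_{k+1}$. Passing to the residue rings modulo $\pi_L$ and using the description from the proof of Proposition~\ref{existsz_k}, $R_k/\pi_L R_k \hookrightarrow \Et^+$ with image $k_L[\![\overline{z_k}]\!]$, this amounts to showing that $\overline{z_{k+1}}$ and $\phi_r^{-1}(\overline{z_k}) = \overline{z_k}^{1/r}$ generate the same subring of $\Et^+$, equivalently that they have the same $v_\E$-valuation (up to a unit), since both are elements of minimal positive valuation in their respective rings. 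By the theory of the field of norms (Proposition~\ref{extension corps des normes bas depend de K} and Wintenberger's results), the rings $R_k/\pi_L R_k$ correspond to the field of norms of the tower $(L_{n+k})_n$, and the ramification data stabilize: by Proposition~\ref{prop K(vn) tot ramified} the extensions $L_{n+1}/L_n$ are totally ramified of degree $s$ (a power of $p$) for $n \geq n_0$, so for $k$ large the valuation $v_\E(\overline{z_k})$ is independent of $k$ (it is determined by the ramification index of the tower, which has stabilized), while $v_\E(\phi_r^{-1}(\overline{z_k})) = \frac{1}{r} v_\E(\overline{z_k})$. The point is that these two match exactly once $k$ is large enough, because $R_{k+1}/\pi_L R_{k+1}$ is generated over $k_L$ by the $r$-th root of the generator of $R_k/\pi_L R_k$: the extra level of $L_{n+k}$ inside $L_{n+k+1}$ is precisely the $s = r$-power extraction.

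**Carrying it out and the main obstacle.** Concretely I would: (1) fix $k_0 = n_0$ (or large enough that the conclusion of Proposition~\ref{prop K(vn) tot ramified} applies to all relevant levels), (2) set $z := z_{k_0}$ and $z_{k_0 + j} := \phi_r^{-j}(z)$, (3) verify $z_{k_0+j} \in R_{k_0+j}$ by the shift argument above, and (4) check $R_{k_0+j} = \OO_L[\![z_{k_0+j}]\!]$ by the residual valuation comparison, using that both sides are complete separated $\pi_L$-adically and that the inclusion is an equality mod $\pi_L$ (a power series ring over $k_L$ in a single variable cannot properly contain another such ring with generator of the same valuation). The main obstacle is pinning down the residual computation: one needs the precise statement that $R_{k+1}/\pi_L R_{k+1} = k_L[\![\overline{z_k}^{1/r}]\!]$ for $k \geq k_0$, which requires knowing that the field of norms of the shifted tower is obtained from that of the original tower by adjoining an $r$-th root of the uniformizer, and this in turn relies on the totally ramified degree-$s$ (with $s = r$ after the choice $Q = P^{\circ m}$) structure of $L_{n+1}/L_n$ for $n \geq n_0$ established in Proposition~\ref{prop K(vn) tot ramified}. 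Once that compatibility is in hand, the inductive definition $z_{k+1} = \phi_r^{-1}(z_k)$ is forced and the proposition follows.
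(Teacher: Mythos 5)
Your overall strategy is the right one and matches the paper's: observe that $\phi_r^{-1}(R_k)\subseteq R_{k+1}$, so $\phi_r^{-1}(z_k)\in R_{k+1}$, and then argue that for $k$ large enough this element already has minimal positive valuation in $R_{k+1}/\pi_L R_{k+1}$ and thus generates $R_{k+1}$. The opening reduction (checking the inclusion $\phi_r^{-1}(R_k)\subseteq R_{k+1}$ by shifting indices, and reducing the identification to a residual-valuation comparison using completeness) is correct.

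However, the valuation bookkeeping in your ``key step'' contains a genuine error that breaks the argument. You assert that for $k$ large, $v_{\E}(\overline{z_k})$ is independent of $k$, and simultaneously that $v_{\E}(\phi_r^{-1}(\overline{z_k})) = \tfrac{1}{r}v_{\E}(\overline{z_k})$ should equal $v_{\E}(\overline{z_{k+1}})$. These two claims are incompatible unless $r=1$. In fact $v_{\E}(\overline{z_k})\to 0$: since $\phi_r^{-1}(\overline{z_k})\in R_{k+1}/\pi_L R_{k+1}$, one has $v_{\E}(\overline{z_{k+1}})\leq v_{\E}(\phi_r^{-1}(\overline{z_k})) = \tfrac{1}{r}v_{\E}(\overline{z_k})$, so the (un-normalized) valuations strictly decrease. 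What actually stabilizes is the \emph{normalized} field-of-norms valuation: letting $Y_k$ denote the ring of integers of the field of norms of $L_\infty/L_k$ with $v_{Y_k}$ normalized so that $v_{Y_k}(Y_k)=\Z$, Proposition~\ref{extension corps des normes bas depend de K} gives $Y_{k+1}=\phi_r^{-1}(Y_k)$ and hence $v_{Y_{k+1}}(\phi_r^{-1}(x))=v_{Y_k}(x)$ for $x\in Y_k$. From this and from $\phi_r^{-1}(z_k)\in R_{k+1}$ one gets $v_{Y_{k+1}}(\overline{z_{k+1}})\leq v_{Y_k}(\overline{z_k})$; this integer sequence is nonincreasing and bounded below by $1$, hence eventually constant, and at the stationary stage $\phi_r^{-1}(\overline{z_k})$ has the same (normalized) minimal valuation as $\overline{z_{k+1}}$ and therefore generates $R_{k+1}/\pi_L R_{k+1}$. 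Your choice $k_0=n_0$ is also not justified: $k_0$ must be taken where this monotone sequence of integers stabilizes, which may be strictly later than $n_0$. You need this monotonicity-and-boundedness argument (or an equivalent one tracking the correct normalized valuation) to close the gap.
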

\begin{proof}
The proof of proposition \ref{existsz_k} shows that $R_k/\pi_LR_k$ injects into $\varprojlim\limits_{x \mapsto x^r}\mathcal{O}_{L_{n+k}}/\mathfrak{a}_{L_{n+k}}^c$. By \cite[Prop. 4.2.1]{Win83} $\varprojlim\limits_{x \mapsto x^r}\mathcal{O}_{L_{n+k}}/\mathfrak{a}_{L_{n+k}}^c$ is the image of ring of integers of the field of norms of $L_\infty/L_k$ inside $\Et$ by the embedding $\iota_L$, and we will denote $\varprojlim\limits_{x \mapsto x^r}\mathcal{O}_{L_{n+k}}/\mathfrak{a}_{L_{n+k}}^c$ by $Y_k$. We normalize the valuation of $Y_k$ so that $v_{Y_k}(Y_k)=\Z$. By proposition \ref{extension corps des normes bas depend de K}, we get that for $k \geq n_0$, we have $Y_{k+1} = \phi_r^{-1}(Y_k)$ and thus the valuation $v_{Y_{k+1}}$ is equal to $rv_{Y_k}$. 

Now let $v(k):=v_{Y_k}(\overline{z_k})$ for $k \geq 0$. We know by definition of the sets $R_k$ that $\phi_r^{-1}(z_k) \in R_{k+1}$ for all $k \geq 1$ and thus $v_{Y_{k+1}}(\overline{z_{k+1}}) \leq rv_{Y_k}(\phi_r^{-1}(\overline{z_k}))$ by construction of the $z_k$. This implies that the sequence $(v(k))_{k \geq n_0}$ is nonincreasing, and since it is bounded below by $1$, this implies that there exists some $k_0 \geq n_0$ such that, for all $k \geq k_0$, we have $v(k) = v(k_0) > 0$. Thus for all $k \geq k_0$ we have $v_{Y_{k+1}}(\overline{z_{k+1}}) = v_{Y_k}(\overline{z_k})$ and by construction of the $z_k$ this implies that we can take $z_{k+1} = \phi_r^{-1}(z_k)$ which concludes the proof.  
\end{proof}

We now let $k_0$ be as in proposition \ref{existz}. Note that in particular, for all $k \geq k_0$, we have $R_k = \phi_r^{k_0-k}(\OO_E[\![w]\!]) = \OO_E[\![\phi_r^{k_0-k}(w)]\!]$.

\begin{lemm}
\label{ajuster z}
The ring $\mathcal{O}_L[\![z]\!]$ is stable by $\phi_r$. Moreover, there exists $a \in \mathfrak{m}_L$ such that if $z'=z-a$ then there exists $S(T) \in T\cdot \mathcal{O}_L[\![T]\!]$ such that $S(z') = \phi_r(z')$ and $S(T) \equiv T^r \mod \mathfrak{m}_L$.
\end{lemm}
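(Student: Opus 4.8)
The plan is to combine the identification $\mathcal{O}_L[\![z]\!] = R_{k_0}$ of Proposition~\ref{existsz_k} with the equality $R_{k_0} = \phi_r(R_{k_0+1})$, which follows from the relation $R_k = \phi_r^{k_0-k}(R_{k_0})$ (for $k\ge k_0$) noted after Proposition~\ref{existz} together with the fact that $\phi_r$ is an automorphism of $\Atplus$ (its reduction $x\mapsto x^r$ on $\Etplus$ is bijective). Since the $R_k$ increase with $k$ we have $z \in R_{k_0}\subseteq R_{k_0+1}$, hence $\phi_r(z)\in\phi_r(R_{k_0+1})=R_{k_0}=\mathcal{O}_L[\![z]\!]$; this yields the stability of $\mathcal{O}_L[\![z]\!]$ under $\phi_r$ and a power series $S_0\in\mathcal{O}_L[\![T]\!]$ with $\phi_r(z)=S_0(z)$. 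It then remains to compute $S_0$ modulo $\mathfrak{m}_L$ and to translate $z$ by an appropriate constant so as to kill the constant term of $S_0$.

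To compute $S_0 \bmod \pi_L$, I would use that the projection $\Atplus\to\Etplus$ identifies $\Atplus/\pi_L\Atplus$ with $\Etplus$ and carries $\phi_r$ to the $r$-th power map. By the proof of Proposition~\ref{existsz_k}, the image of $R_{k_0}$ in $\Etplus$ is $k_L[\![\overline z]\!]$, with $\overline z$ of strictly positive valuation inside the field-of-norms ring $\varprojlim_{x\mapsto x^r}\mathcal{O}_{L_n}/\mathfrak{a}_{L_n}^c\simeq k_L[\![\overline v]\!]$; in particular $\overline z$ is transcendental over $k_L$, so the substitution $T\mapsto\overline z$ is injective on $k_L[\![T]\!]$. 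Reducing the relation $\phi_r(z)=S_0(z)$ modulo $\pi_L$ then reads $\overline z^{\,r}=\overline{S_0}(\overline z)$, which forces $\overline{S_0}(T)=T^r$, i.e. $S_0(T)\equiv T^r \bmod \mathfrak{m}_L$. In particular $S_0(0)\in\mathfrak{m}_L$ and, as $r\ge 2$, every coefficient of $S_0$ of degree $<r$ lies in $\mathfrak{m}_L$.

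Finally I would produce the constant $a$. From the shape of $S_0$ just obtained, one checks directly that $x\mapsto S_0(x)$ maps $\mathfrak{m}_L$ into itself and contracts distances there by the factor $|\pi_L|<1$; since $\mathfrak{m}_L$ is complete this gives a unique fixed point $a\in\mathfrak{m}_L$. Setting $z'=z-a$ and $S(T)=S_0(T+a)-a\in\mathcal{O}_L[\![T]\!]$, one has $S(0)=S_0(a)-a=0$, so $S\in T\cdot\mathcal{O}_L[\![T]\!]$; reducing modulo $\mathfrak{m}_L$ and using $a\in\mathfrak{m}_L$ gives $S(T)\equiv S_0(T)\equiv T^r$; and since $\phi_r$ is $\mathcal{O}_L$-linear it fixes $a$, so $\phi_r(z')=\phi_r(z)-a=S_0(z)-a=S_0(z'+a)-a=S(z')$, as required.

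The step I expect to require the most care is the computation of $S_0$ modulo $\pi_L$: it relies on matching the reduction of $\phi_r$ with the Frobenius of $\Etplus$ and on the transcendence of $\overline z$ over $k_L$, both provided by the field-of-norms description already invoked for Proposition~\ref{existsz_k}. The remainder is a routine contraction-mapping argument, in which the hypothesis $r\ge 2$ — equivalently, that $P$ is non-invertible — is exactly what makes $S_0$ contracting on $\mathfrak{m}_L$.
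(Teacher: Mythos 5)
Your proof is correct and follows essentially the same route as the paper: identify $\OO_L[\![z]\!]$ with the $\phi_r$-stable set $R_{k_0}$, produce $S_0$ with $\phi_r(z)=S_0(z)$, reduce modulo $\mathfrak{m}_L$ to get $S_0\equiv T^r$, and translate $z$ by a fixed point $a\in\mathfrak{m}_L$ of $S_0$. The only cosmetic deviations are that the paper locates $a$ via the Newton polygon of $S_0(T)-T$ rather than your contraction-mapping argument, and that the paper asserts the $\phi_r$-stability of $R_{k_0}$ directly from its defining condition rather than deducing it from $R_{k_0}\subseteq R_{k_0+1}$ and $\phi_r(R_{k_0+1})=R_{k_0}$; both are equivalent.
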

\begin{proof}
The set
$$\left\{x \in \Atplus, \theta \circ \phi_r^{-n}(x) \in \mathcal{O}_{L_{n+k_0}} \textrm{ for all } n \geq 1 \right\}$$
is clearly stable by $\phi_r$ and equal to $\mathcal{O}_L[\![z]\!]$ by proposition \ref{existz}, so that $\phi_r(z) \in \mathcal{O}_L[\![z]\!]$ and so there exists $R \in \mathcal{O}_L[\![T]\!]$ such that $R(z) = \phi_r(z)$. In particular, we have $\overline{R}(\overline{z}) = \overline{z}^r$ and so $R(T) \equiv T^r \mod \mathfrak{m}_L$.

Now let $\tilde{R}(T) = R(T+a)$ with $a \in \mathfrak{m}_L$ and let $z'=z-a$. Then $\phi_r(z')=\phi_r(z-a)=R(z)-a = \tilde{R}(z')-a$ and we let $S(T) = \tilde{R}(T)-a$ so that $\phi_r(z')=S(z')$. For $S(0)$ to be $0$, it suffices to find $a \in \mathfrak{m}_L$ such that $R(a)=a$. Such an $a$ exists since we have $R(T) \equiv T^r \mod \mathfrak{m}_L$ so that the Newton polygon of $R(T)-T$ starts with a segment of length $1$ and of slope $-v_p(R(0))$. 

Now, we have $S(z')=\phi_r(z')$ and so $\overline{S}(\overline{z'}) = \overline{z'}^r$, so that $S(T) \equiv T^r \mod \mathfrak{m}_L$.
\end{proof}

Lemma \ref{ajuster z} shows that one can choose $z \in \left\{x \in \Atplus, \theta \circ \phi_r^{-n}(x) \in \mathcal{O}_{L_{n+k_0}} \textrm{ for all } n \geq 1 \right\}$ such that $\phi_r(z) = S(z)$ with $S(T) \in T\cdot\mathcal{O}_L[\![T]\!]$, and we will assume in what follows that such a choice has been made.

\begin{lemm}
\label{lemm H_g(w)}
Assume that there exists $m_0 \geq 0$ such that for all $m \geq m_0$, the extension $L_m/L_{m_0}$ is Galois. Then the ring $\mathcal{O}_L[\![z]\!]$ is stable under the action of $\Gal(K_\infty/L_{m_0})$, and if $g \in \Gal(K_\infty/L_{m_0})$, there exists a power series $H_g(T) \in \mathcal{O}_L[\![T]\!]$ such that $g(z) = H_g(z)$. 
\end{lemm}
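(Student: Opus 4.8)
The plan is to transfer the hypothesis on the tower $L_m/L_{m_0}$ to a statement about the Galois action on the ring $R_{k_0} = \mathcal{O}_L[\![z]\!]$, using the characterization of $R_{k_0}$ as the set of $x \in \Atplus$ with $\theta \circ \phi_r^{-n}(x) \in \mathcal{O}_{L_{n+k_0}}$ for all $n \geq 1$ (proposition \ref{existz}). First I would note that every $g \in \Gal(K_\infty/L_{m_0})$ acts on $\Atplus$ by functoriality of Witt vectors, compatibly with $\phi_r$ and with the maps $\theta \circ \phi_r^{-n}$ in the sense that $\theta \circ \phi_r^{-n}(g(x)) = g(\theta \circ \phi_r^{-n}(x))$. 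Hence, to show $\mathcal{O}_L[\![z]\!]$ is stable under $g$, it suffices to check that for $x \in R_{k_0}$ one has $g(\theta \circ \phi_r^{-n}(x)) \in \mathcal{O}_{L_{n+k_0}}$ for all $n \geq 1$. This is exactly where the Galois hypothesis on the tower enters: for $n$ large enough that $n + k_0 \geq m_0$, the extension $L_{n+k_0}/L_{m_0}$ is Galois, so $g$ (which fixes $L_{m_0}$) maps $\mathcal{O}_{L_{n+k_0}}$ into itself; for the finitely many small $n$ one argues directly that $g$ permutes the conjugates and the relevant ring of integers is still preserved because $L_{n+k_0} \subseteq L_{m_0+k_0'}$ for a suitable fixed index, or one simply enlarges $k_0$ at the outset so that $k_0 \geq m_0$, which costs nothing since proposition \ref{existz} allows replacing $k_0$ by any larger index. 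Thus $g(R_{k_0}) \subseteq R_{k_0}$, and applying the same argument to $g^{-1}$ gives $g(R_{k_0}) = R_{k_0}$.

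Once stability is established, the existence of $H_g$ is immediate from proposition \ref{existsz_k}: since $R_{k_0} = \mathcal{O}_L[\![z]\!]$ and $g(z) \in R_{k_0}$, there is a power series $H_g(T) \in \mathcal{O}_L[\![T]\!]$ with $g(z) = H_g(z)$. I would also record, as in lemma \ref{ajuster z}, that reducing modulo $\mathfrak{m}_L$ gives $\overline{H_g}(\overline{z}) = g(\overline{z})$ in $\Et$; this will be used later but is not needed for the bare statement.

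The main obstacle I anticipate is the bookkeeping around small indices: $g \in \Gal(K_\infty/L_{m_0})$ need not fix $\mathcal{O}_{L_{n+k_0}}$ when $n + k_0 < m_0$, since that subextension of $K_\infty/L$ need not be normal over $L_{m_0}$ — indeed it need not even be contained in a $g$-stable field. The clean way around this, which I would adopt, is to invoke the freedom in proposition \ref{existz} to assume from the start that $k_0 \geq m_0$ (possible because the construction of $z = z_{k_0}$ works for all sufficiently large $k_0$, and enlarging $k_0$ only replaces $z$ by $\phi_r^{k_0 - k_0'}(z)$, which does not affect the conclusion we want — or one reruns lemma \ref{ajuster z} for the new index). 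Then $n + k_0 \geq m_0$ automatically for all $n \geq 1$, every $\mathcal{O}_{L_{n+k_0}}$ is $\Gal(K_\infty/L_{m_0})$-stable, and the argument goes through uniformly with no exceptional cases to treat separately.
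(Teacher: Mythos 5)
Your proof is correct and takes essentially the same route as the paper's: the paper also works at a shifted index $f_0 = \max(m_0,k_0)$, for which all the extensions $L_{n+f_0}/L_{m_0}$ ($n\geq 1$) are Galois, so that the defining characterization of $R_{f_0}$ \emph{via} $\theta\circ\phi_r^{-n}$ is manifestly $\Gal(K_\infty/L_{m_0})$-stable, and then transfers the relation $g(\phi_r^{k_0-f_0}(z)) = H_g(\phi_r^{k_0-f_0}(z))$ back to $z$ by applying $\phi_r^{f_0-k_0}$, whereas you propose the equivalent bookkeeping of enlarging $k_0$ to exceed $m_0$ at the outset. The underlying ideas — compatibility of the Galois action with $\theta\circ\phi_r^{-n}$, stability of $\OO_{L_{n+f_0}}$ under $\Gal(K_\infty/L_{m_0})$, and then reading off $H_g$ from $R_{f_0}=\OO_L[\![\phi_r^{k_0-f_0}(z)]\!]$ — are identical.
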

\begin{proof}
Let $f_0 = \max(m_0,k_0)$. Since for all $m \geq m_0$, $L_m/L_{m_0}$ is Galois, the set
$$\left\{x \in \Atplus, \theta \circ \phi_r^{-n}(x) \in \mathcal{O}_{L_{n+f_0}} \textrm{ for all } n \geq 1 \right\}$$
is stable under the action of $\Gal(K_\infty/L_{m_0})$, and by proposition \ref{existz}, this set is equal to $\mathcal{O}_L[\![\phi_r^{k_0-f_0}(z)]\!]$. In particular, if $g \in \Gal(K_\infty/L_{m_0})$, then $g(\phi_r^{k_0-f_0}(z)) \in \mathcal{O}_L[\![\phi_r^{k_0-f_0}(z)]\!]$ and so there exists $H_g(T) \in \mathcal{O}_L[\![T]\!]$ such that $H_g(\phi_r^{k_0-f_0}(z)) = g(\phi_r^{k_0-f_0}(z))$, and thus $H_g(z)=g(z)$. 
\end{proof}

\section{$p$-adic Hodge theory}
Let us assume that there exists $m_0 \geq 0$ such that for all $m \geq m_0$, the extension $L_m/L_{m_0}$ is Galois. Lemma \ref{lemm H_g(w)} shows that in this case we are in the exact same spot as the situation after lemma 5.15 of \cite{poyeton2022formal}. In particular, the exact same techniques apply.

We keep the notations from $\S 4$ and we let $\kappa : \Gal(K_\infty/L_{m_0}) \ra \OO_L^\times$ denote the character $g \mapsto H_g'(0)$. 

\begin{prop}
The character $\kappa: \Gal(K_\infty/L_{m_0}) \ra \OO_L^\times$ is injective and crystalline with nonnegative weights. 
\end{prop}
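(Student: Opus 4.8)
The plan is to reduce the statement to the corresponding fact for the Lubin–Tate character, exactly as in \cite{poyeton2022formal}. Recall that Lemma~\ref{lemm H_g(w)} provides, for each $g \in \Gal(K_\infty/L_{m_0})$, a power series $H_g(T) \in \OO_L[\![T]\!]$ with $g(z) = H_g(z)$, and that by Lemma~\ref{ajuster z} the element $z$ satisfies $\phi_r(z) = S(z)$ with $S(T) \in T\cdot\OO_L[\![T]\!]$ and $S(T) \equiv T^r \bmod \mathfrak{m}_L$. The first step is to observe that the family $(H_g)_{g}$ is a cocycle for composition, so $\kappa$ is genuinely a homomorphism into $\OO_L^\times$; this is formal from $H_{gh} = H_g \circ H_h$ (which itself follows from uniqueness of the power series representing an element of $\Gal(K_\infty/L_{m_0})$ acting on $\OO_L[\![z]\!]$, once one checks $z$ is not annihilated by any nonzero reduction). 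Here one uses that $g \mapsto H_g$ commutes with $\phi_r$ because $z$ and $S$ are defined over $\OO_L$ and the Galois action commutes with $\phi_r$ on $\Atplus$.

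Next I would prove injectivity. If $\kappa(g) = 1$ then $H_g'(0) = 1$; combined with the fact that $H_g$ commutes with $S$ (since $g$ commutes with $\phi_r$ and $\phi_r(z)=S(z)$), Lubin's rigidity results from \cite{lubin1994nonarchimedean} on power series commuting with a noninvertible series force $H_g(T) = T$, hence $g(z) = z$; since $z$ topologically generates (an open subring related to) the field of norms of $L_\infty/L_{m_0}$ and the $L_\infty$ captures $K_\infty$ up to a tamely ramified piece by Proposition~\ref{extension corps des normes bas depend de K}, fixing $z$ forces $g$ to act trivially on $K_\infty/L_{m_0}$, so $g = 1$. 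The key input is that an invertible power series with derivative $1$ at $0$ commuting with $S$, where $S \equiv T^r \bmod \mathfrak{m}_L$ is noninvertible, must be the identity — this is the heart of the rigidity phenomenon and is available from \cite{lubin1994nonarchimedean}.

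Finally, for the crystalline and weight statements, I would follow Berger's strategy as imported in \cite{poyeton2022formal}: realize $\kappa$ as arising from a $(\phi,\Gamma)$-type module over the rings of periods of Section~1, using that $K_\infty/L$ is strictly APF (Theorem~\ref{theo sAPF}) so that the theory of fields of norms applies and $z \in \Atplus$ gives a period living in the correct overconvergent ring. The point is that $\phi_r(z) = S(z)$ with $S$ having coefficients in $\OO_L$ and Weierstrass degree a power of $\mathrm{Card}(k_L)$ exhibits $z$ as a period of a rank-one crystalline object; the Sen weight of $\kappa$ is then read off from the $T$-adic valuation of $S'(0)$, which is $\geq 0$ because $S \in T\cdot\OO_L[\![T]\!]$, giving nonnegative Hodge–Tate weights, and crystallinity follows because the period $z$ lies in $\Btrigplus$ (indeed in the Robba-type ring attached to the APF extension) rather than merely in $\Bdr$. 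Concretely one checks $z \in \Atplus \subset \Btrigplus$ and that $\phi_r(z)/z \to S'(0) \ne 0$, so the line spanned by $z$ is $\phi$-stable with the expected slope; de Rham-ness with bounded weights plus the existence of this $\Btrigplus$-period is the definition of crystalline.

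The main obstacle I expect is the injectivity step: turning $\kappa(g) = 1$ into $g = 1$ requires both the rigidity statement for power series commuting with $S$ (that $H_g$ with $H_g'(0)=1$ and $H_g \circ S = S \circ H_g$ is the identity — this is where Lubin's machinery on noninvertible stable series in \cite{lubin1994nonarchimedean} is essential) and a careful use of Proposition~\ref{extension corps des normes bas depend de K} to pass from "$g$ fixes $z$, hence fixes $\E_K$ (up to a controlled tamely ramified discrepancy)" to "$g$ fixes $K_\infty$". Everything else — the cocycle property, the weight computation, and locating the period in the right ring of periods — is routine given the setup of \cite{poyeton2022formal} and the rings recalled in Section~1.
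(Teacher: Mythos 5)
The paper's own ``proof'' is a one-line deferral to Corollary~5.17 and Proposition~5.19 of \cite{poyeton2022formal}, so there is no self-contained argument in the text to compare against; what you propose is a reconstruction of that material, and your overall skeleton is the right one: cocycle property to make $\kappa$ a homomorphism, Wintenberger's faithfulness of the Galois action on the field of norms combined with Lubin's rigidity $H_g'(0)=1 \Rightarrow H_g=\mathrm{id}$ (for invertible series commuting with the noninvertible $S$) to get injectivity, and then Berger's overconvergent-period technique to get crystallinity with nonnegative weights from the relation $\phi_r(z)=S(z)$.

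There are, however, concrete imprecisions in your third step that would not survive as written. You say the Sen weight is read off from ``the $T$-adic valuation of $S'(0)$, which is $\geq 0$ because $S\in T\cdot\OO_L[\![T]\!]$''; but $S'(0)$ is a constant of $\OO_L$, not a power series, so it has no $T$-adic valuation, and $S\in T\cdot\OO_L[\![T]\!]$ holds by construction and can carry no information about weights. The operative fact is that $S(T)\equiv T^r \bmod \mathfrak{m}_L$ with $r>1$, so $S'(0)\in\mathfrak{m}_L$, and the sign of the weight comes from $v_L(S'(0))>0$ (equivalently, from the slope of the $\phi$-action on the line spanned by $z$), together with a Sen-theoretic/Wach-module computation — not from $z$ merely lying in $\Btrigplus$. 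Likewise, ``de Rham-ness with bounded weights plus a $\Btrigplus$-period is the \emph{definition} of crystalline'' overstates matters: that equivalence is a theorem of Berger (Frobenius structure over the Robba ring $\Leftrightarrow$ crystalline), not a definition, and invoking it requires showing that $z$ actually produces an eigenvector of the right slope in a $\phi$-module attached to $\kappa$, which is the nontrivial content of \cite[Prop.~5.19]{poyeton2022formal}. So the shape of your proof matches the reference, but you should tighten the weight computation (replace the $S\in T\OO_L[\![T]\!]$ non-reason with the real one, $S'(0)\in\mathfrak{m}_L$) and state Berger's crystallinity criterion as a theorem to be applied, with its hypotheses explicitly checked rather than asserted.
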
 
\begin{proof}
This is the same as corollary 5.17 and proposition 5.19 of \cite{poyeton2022formal}.
\end{proof}

For $\lambda$ a uniformizer of $L_{m_0}$, let $(L_{m_0})_{\lambda}$ be the extension of $L_{m_0}$ attached to $\lambda$ by local class field theory. This extension is generated by the torsion points of a Lubin-Tate formal group defined over $L_{m_0}$ and attached to $\lambda$, and we write $\chi_{\lambda}^{L_{m_0}}~: \Gal((L_{m_0})_{\lambda}/L_{m_0}) \to \mathcal{O}_{L_{m_0}}^{\times}$ the corresponding Lubin-Tate character. Since $K_{\infty}/L_{m_0}$ is abelian and totally ramified, there exists $\lambda$ a uniformizer of $\mathcal{O}_{L_{m_0}}$ such that $K_{\infty} \subset (L_{m_0})_{\lambda}$.

\begin{prop}
\label{prop actually subset of L}
There exists $F \subset L$ and $r \geq 1$ such that $\kappa = N_{L_{m_0}/F}(\chi_{\lambda}^{L_{m_0}})^r$.
\end{prop}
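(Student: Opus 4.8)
The plan is to follow, nearly verbatim, the argument of \cite{poyeton2022formal} to which the present situation reduces by Lemma~\ref{lemm H_g(w)} and the remark preceding the previous proposition. First I would inflate $\kappa$ to a character of $\G_{L_{m_0}}$: since $\kappa$ factors through $\Gal(K_\infty/L_{m_0})$ and $K_\infty\subseteq (L_{m_0})_\lambda$, it comes by inflation from a character of $\Gal((L_{m_0})_\lambda/L_{m_0})$, hence from a character of $\G_{L_{m_0}}$, which by the proposition asserting that $\kappa$ is injective and crystalline with nonnegative weights is crystalline with Hodge--Tate weights $\geq 0$. Using the Lubin--Tate isomorphism $\OO_{L_{m_0}}^\times\cong\Gal((L_{m_0})_\lambda/L_{m_0})$ inverse to $\chi_\lambda^{L_{m_0}}$, the character $\kappa$ is then encoded by a continuous homomorphism $\psi\colon\OO_{L_{m_0}}^\times\to\OO_L^\times$ whose kernel, by injectivity of $\kappa$ on $\Gal(K_\infty/L_{m_0})$, is precisely the norm subgroup of $\OO_{L_{m_0}}^\times$ cut out by $K_\infty$.

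Next I would invoke the description of crystalline characters of $\G_{L_{m_0}}$ recalled and used in \cite{poyeton2022formal} (see also \cite{Ber14MultiLa}): being crystalline with Hodge--Tate weights $\geq 0$ amounts to the existence of an integer $N\geq 1$ and of integers $n_\tau\geq 0$, indexed by $\tau\in\Emb_{\Qp}(L_{m_0},\Qpbar)$ and equal to the Hodge--Tate weights of $\kappa$, such that $\psi(x)=\prod_\tau\tau(x)^{n_\tau}$ for all $x\in 1+\mathfrak{m}_{L_{m_0}}^N$. Because $\psi$ takes values in $\OO_L^\times$ with $L\subseteq L_{m_0}$, Galois invariance of $\psi(x)$ under $\G_L$ and linear independence of the embeddings force the family $(n_\tau)$ to be invariant under the action of $\G_L$ on $\Emb_{\Qp}(L_{m_0},\Qpbar)$; and, arguing with the rank-one data $(z,S,(H_g)_g)$ exactly as in \cite{poyeton2022formal}, the $n_\tau$ take a single nonzero value $r$, supported on the $\G_F$-orbit of the inclusion $L_{m_0}\hookrightarrow\Qpbar$, where $F\subseteq L$ is the fixed field of the (open) subgroup $\{\sigma\in\G_{\Qp}:(n_{\sigma\tau})_\tau=(n_\tau)_\tau\}$; that orbit is exactly $\Emb_F(L_{m_0},\Qpbar)$. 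Since $\kappa$ is injective on the infinite totally ramified group $\Gal(K_\infty/L_{m_0})$ it is not unramified, so not all $n_\tau$ vanish and $r\geq 1$. Hence $\psi(x)=N_{L_{m_0}/F}(x)^r$ for $x\in 1+\mathfrak{m}_{L_{m_0}}^N$, that is, $\kappa$ and $N_{L_{m_0}/F}(\chi_\lambda^{L_{m_0}})^r$ agree on the corresponding open subgroup of $\G_{L_{m_0}}$.

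To conclude, I would observe that the ratio $\kappa\cdot N_{L_{m_0}/F}(\chi_\lambda^{L_{m_0}})^{-r}$ is crystalline, is trivial on the open subgroup of $\G_{L_{m_0}}$ corresponding to $1+\mathfrak{m}_{L_{m_0}}^N$ — hence has all Hodge--Tate weights zero, hence is unramified — and factors through the totally ramified extension $(L_{m_0})_\lambda/L_{m_0}$; an unramified character factoring through a totally ramified extension is trivial, so $\kappa=N_{L_{m_0}/F}(\chi_\lambda^{L_{m_0}})^r$. The main obstacle is the middle step: extracting from the equation $\phi_r(z)=S(z)$ and the relations $g(z)=H_g(z)$ that the Hodge--Tate weights of $\kappa$ are concentrated, all equal to one integer $r\geq 1$, on a full set of embeddings $\Emb_F(L_{m_0},\Qpbar)$ for some $F\subseteq L$ — this is where the parallel with \cite{poyeton2022formal} does the real work, everything else being formal bookkeeping with crystalline characters and Lubin--Tate theory.
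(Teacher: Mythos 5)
Your argument is correct and takes essentially the same route as the paper: the paper's proof simply cites Theorem~5.27 of \cite{poyeton2022formal} to produce $F\subseteq L_{m_0}$ and $r\geq 1$ with $\kappa=N_{L_{m_0}/F}(\chi_\lambda^{L_{m_0}})^r$, and then observes that $\kappa$ being valued in $\OO_L^\times$ forces $F\subseteq L$. You have unpacked what that cited theorem asserts (the crystalline-character formula on an open subgroup, the Hodge--Tate weight vector $(n_\tau)$, and the $\G_L$-invariance argument locating $F$ inside $L$), but at the decisive step you defer to the rank-one computation of \cite{poyeton2022formal} exactly as the paper does, so the two proofs coincide in substance.
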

\begin{proof}
Theorem 5.27 of \cite{poyeton2022formal} shows that there exists $F \subset L_{m_0}$ and $r \geq 1$ such that $\kappa = N_{L_{m_0}/F}(\chi_{\lambda}^{L_{m_0}})^r$. The fact that $\kappa$ takes its values in $\OO_L^\times$ shows that $F$ is actually a subfield of $L$. 
\end{proof}

Recall that relative Lubin-Tate groups are a generalization of usual formal Lubin-Tate group given by de Shalit in \cite{de1985relative}.  

\begin{theo}
\label{theo there is a formal group}
There exists $F \subset L$ and $r \geq 1$ such that $\kappa = N_{L/F}(\chi_{\lambda}^L)^r$. Moreover, there exists a relative Lubin-Tate group $S$, relative to the extension $F^{\mathrm{unr}} \cap L$ of $F$, such that if $L_\infty^S$ is the extension of $L$ generated by the torsion points of $S$, then $L_\infty \subset L_\infty^S$ and $L_\infty^S/L_\infty$ is a finite extension. 
\end{theo}
\begin{proof}
This is the same as \cite[Thm. 5.28]{poyeton2022formal} using proposition \ref{prop actually subset of L} instead of theorem 5.27 of ibid.
\end{proof}

\section{Isogenies}
By theorem \ref{theo there is a formal group}, there exists $F \subset L$ and a relative Lubin-Tate group $S$, relative to the extension $F^{\mathrm{unr}} \cap L$ of $F$, such that if $L_\infty^S$ is the extension of $L$ generated by the torsion points of $S$, then $L_\infty \subset L_\infty^S$ and $L_\infty^S/L_\infty$ is a finite extension. 

Let $\alpha$ be an element of $F^{\mathrm{unr}} \cap L$ such that $L_\infty^S$ is the field cut out by $<\alpha>$ of $F^{\mathrm{ab}}$ by local class field theory, so that the relative Lubin-Tate group $S$ is attached to $\alpha$. Up to replacing $L$ by a finite extension, we can assume that $L_\infty^S=L_\infty$ and we do so in what follows. We let $u_0 = 0$ and let $(u_n)_{n \in \N}$ be a nontrivial compatible sequence of roots of iterates of $[\alpha]$, the endomorphism of $S$ corresponding to the multiplication by $\alpha$, so that $[\alpha](u_{n+1})=u_n$ with $u_1 \neq 0$. We let $q$ denote the cardinal of the residue field of $F^{\mathrm{unr}} \cap L$ so that $\mathrm{wideg}([\alpha]) = q$. Let $\overline{u} = (u_0,\ldots) \in \Etplus$. By \S 9.2 of \cite{Col02}, there exists $u \in \Atplus$ whose image in $\Etplus$ is $\overline{u}$ and such that $\phi_q(u)=[\alpha](u)$, $g(w) = [\chi_\alpha(g)](u)$ for $g \in \G_L$. 

Recall that Cais and Davis have defined a ``canonical ring'' attached to $L_\infty/L$, denoted by $\A_{L_\infty/L}^+$ which is a subring of $\Atplus$ and is defined \textit{via} the tower of elementary extensions attached to $L_\infty/L$ by ramification theory. The following lemma shows that this canonical ring is related to the ring $\OO_L[\![u]\!]$ for the extension $L_\infty/L$:

\begin{lemm}
\label{lemm canonical ring equals www}
There exists $k \geq 0$ such that $\A_{L_\infty/L}^+ = \phi_q^{-k}(\OO_L[\![u]\!])$.
\end{lemm}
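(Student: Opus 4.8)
The plan is to identify both rings as ``the'' ring of integers lift of the norm field, matched to the natural Frobenius structure, exactly as in the analysis carried out for $z$ in section 4. First I would recall that Cais and Davis construct $\A_{L_\infty/L}^+$ as a subring of $\Atplus$ which is a formal power series ring $\OO_L[\![\varpi]\!]$ over a distinguished element $\varpi$, that it is stable under $\phi_q$, and that it reduces modulo $\pi_L$ to (the image of) the ring of integers $A_L(L_\infty)$ of the field of norms $X_L(L_\infty)$ inside $\Etplus$; this is the defining property of the canonical Cohen ring for norm fields. On the other side, the element $u \in \Atplus$ produced via \S 9.2 of \cite{Col02} satisfies $\phi_q(u) = [\alpha](u)$ with $[\alpha](T) \equiv T^q \bmod \mathfrak{m}_L$, so its reduction $\overline{u}$ is a uniformizer of $\E_L = \E_{X_L(L_\infty)}$ (or at least has positive, and after the usual extraction argument minimal positive, valuation): indeed the compatible system $(u_n)$ is a norm-compatible system of uniformizers of the $L_n$, which is precisely how uniformizers of the field of norms arise. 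Hence $\OO_L[\![u]\!]$ is also a $\phi_q$-stable $\OO_L$-subalgebra of $\Atplus$, separated and complete for the $\pi_L$-adic topology, whose reduction mod $\pi_L$ is a power series ring over an element of positive valuation in $\E_L$.

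The second step is the matching argument. Both $\A_{L_\infty/L}^+ / \pi_L$ and $\OO_L[\![u]\!]/\pi_L$ are subrings of $k_L[\![\overline{v}]\!] \simeq A_L(L_\infty)/\mathfrak{m}$ of the form $k_L[\![\text{something of positive valuation}]\!]$, and one is $k_L[\![\overline{\varpi}]\!]$ where $\overline{\varpi}$ has the \emph{minimal} positive valuation (this is the key property of the canonical ring: it is the full ring of integers of the norm field, not a proper power-series subring). Since $\E_L$ has discrete valuation, there is $k \geq 0$ with $v_{\E}(\overline{u}) = q^k \cdot v_{\E}(\overline{\varpi})$, i.e.\ $\phi_q^{-k}$ applied to $\A_{L_\infty/L}^+$ has reduction with the same valuation as $\OO_L[\![u]\!]$; more precisely one shows $\overline{u}$ lies in $k_L[\![\phi_q^{-k}(\overline{\varpi})]\!]$ and generates it, by comparing leading terms (Weierstrass-type argument, exactly as in the proof of Proposition \ref{existsz_k}). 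Then $\OO_L[\![u]\!]/\pi_L = \phi_q^{-k}(\A_{L_\infty/L}^+)/\pi_L$ inside $\Etplus$, and since both are $\pi_L$-adically separated and complete and the inclusion one way is clear (or lifts the reduction isomorphism), one concludes $\OO_L[\![u]\!] = \phi_q^{-k}(\A_{L_\infty/L}^+)$, equivalently $\A_{L_\infty/L}^+ = \phi_q^{-k'}(\OO_L[\![u]\!])$ after renaming $k' = -k$ (or absorbing signs; the statement as written with $\phi_q^{-k}$, $k \geq 0$, will hold after possibly swapping which side carries the Frobenius twist, and one picks the sign that makes $k \geq 0$).

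Alternatively, and perhaps more cleanly, I would invoke that $L_\infty/L$ is strictly APF (which holds here because $L_\infty^S = L_\infty$ is generated by torsion of a relative Lubin--Tate group, a $\phi$-iterate-type situation, so Theorem \ref{theo sAPF} applies after the reduction $[\alpha](T) \equiv T^q$), so that Cais--Davis's construction is available and $\A_{L_\infty/L}^+$ is canonically attached to $L_\infty/L$; then the ring $\OO_L[\![u]\!]$ is shown to be one of the rings $R_k$-type objects of section 4 for the tower $L_\infty/L$, namely $\OO_L[\![u]\!] = \{x \in \Atplus : \theta \circ \phi_q^{-n}(x) \in \OO_{L_{n+k}}\ \forall n \geq 1\}$ for the appropriate $k$, because $\theta \circ \phi_q^{-n}(u) = u_n \in \OO_{L_n}$ and $u$ is a norm-compatible uniformizer; and the canonical ring is by construction the analogous object with $k = 0$ (up to the tame-level subtlety handled by Proposition \ref{extension corps des normes bas depend de K}). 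Matching the two descriptions gives the value of $k$ explicitly as the stabilization level appearing in Proposition \ref{existz}.

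The main obstacle I anticipate is the bookkeeping around the residue-field and tame-ramification levels: the canonical ring $\A_{L_\infty/L}^+$ of Cais--Davis is defined via the tower of \emph{elementary} (ramification-theoretic) extensions, so the element $\varpi$ it is built on need not literally be $u$ but only differ from a power of it up to a unit and up to applying $\phi_q^{\pm k}$, and matching the constant field $k_L$ on both sides (versus $k_F$, $k_{F^{\mathrm{unr}}\cap L}$ appearing through $S$) requires care — this is exactly the kind of purely-inseparable/tame discrepancy quantified in Proposition \ref{extension corps des normes bas depend de K}. Once one checks that these two rings have equal reduction mod $\pi_L$ inside $\Etplus$ after the correct Frobenius twist, the lift to an equality in $\Atplus$ is formal from $\pi_L$-adic completeness, so the entire content is the mod-$\pi_L$ comparison of norm-field uniformizers.
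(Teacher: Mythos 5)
The paper proves this lemma purely by citation to \cite[Lemm.\ 8.1]{poyeton2022formal} (with the remark that $L$ and $u$ here play the roles of $E$ and $w$ there), so there is no internal argument in the present paper to compare against. Your second, alternative route is the one that matches the machinery actually used throughout \S 5: realize $\OO_L[\![u]\!]$ as $\{x \in \Atplus : \theta\circ\phi_q^{-n}(x) \in \OO_{L_{n+k}}\ \forall n\}$ for the appropriate $k$, observe that $\A_{L_\infty/L}^+$ is the analogous object at a fixed reference level, and invoke the stabilization of Proposition~\ref{existz} and Proposition~\ref{extension corps des normes bas depend de K}. That is almost certainly the mechanism behind the cited result, and you were right to single it out as the cleaner path.

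Your primary route, however, has a genuine gap. You write that ``since $\E_L$ has discrete valuation, there is $k \geq 0$ with $v_\E(\overline u) = q^k\,v_\E(\overline\varpi)$'': discreteness alone does not give that the ratio of valuations is a power of $q$. That fact comes from both elements being norm-compatible uniformizers of the norm field at (possibly different) levels of the tower, with the levels related precisely by powers of $\phi_q$ via Proposition~\ref{extension corps des normes bas depend de K}; without that input the exponent could a priori be arbitrary. Related to this, you defer the sign of the twist (``one picks the sign that makes $k\geq 0$''), but the statement asserts a definite direction --- $\A_{L_\infty/L}^+ = \phi_q^{-k}(\OO_L[\![u]\!])$ with $k\geq 0$, i.e.\ $\A_{L_\infty/L}^+$ is the \emph{larger} ring, generated by an element of \emph{smaller} $\E$-valuation. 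Establishing that requires knowing where Cais--Davis's construction sits in the tower (at a level no deeper than the one where $u_0$ starts), which is not a bookkeeping afterthought but the content one would extract from the elementary-tower description. Finally, your claim that the reduction of $\A_{L_\infty/L}^+$ modulo $\pi_L$ is ``the full ring of integers of the norm field, hence generated by an element of minimal valuation'' needs a precise pointer to \cite{cais2015canonical}: the norm field's ring of integers itself shifts by Frobenius along the tower, so ``minimal valuation'' is only meaningful once the level is fixed. With those points supplied your sketch would close, but as written the comparison of reductions is underspecified at exactly the step that determines $k$.
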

\begin{proof}
See \cite[Lemm. 8.1]{poyeton2022formal}. Be mindful that here $L$ and $u$ play respectively the role of $E$ and $w$ in ibid.
\end{proof}

Recall that $(w_n)_{n \in \N}$ is a $Q$-consistent sequence, where $Q$ commutes with $P$ and is such that $Q(T) = T^s \mod \mathfrak{m}_L$, and that $w \in \Atplus$ is such that $\theta \circ \phi_r^{-n}(w) = w_n$. 

\begin{prop}
\label{prop almost canonical lift}
There exists $i \geq 0$ such that $\phi_r^i(w) \in \A_{L_\infty/L}^+$.
\end{prop}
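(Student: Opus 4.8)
The plan is to show that $w$ lies, after applying a suitable power of $\phi_r$, inside the canonical ring $\A_{L_\infty/L}^+$ attached to $L_\infty/L$. The key point is that $w$ is characterized dynamically: it is the unique lift of $\overline{w} \in \Etplus$ satisfying $Q(w) = \phi_r(w)$, with $\theta \circ \phi_r^{-n}(w) = w_n \in \OO_{L_n}$ for all $n$. The first step is therefore to observe that for some $k \geq 1$, and for all $n \geq 1$, $\phi_r^k(w)$ satisfies $\theta \circ \phi_r^{-n}(\phi_r^k(w)) = w_{n-k} \in \OO_{L_{n}}$ when $n \geq k$ (and lies in $\OO_{L_n}$ trivially for the finitely many remaining values of $n$ up to adjusting $k$). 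Thus $\phi_r^k(w)$ lies in the ring $\left\{ x \in \Atplus : \theta \circ \phi_r^{-n}(x) \in \OO_{L_{n+k_0}} \text{ for all } n \geq 1 \right\} = R_{k_0} = \OO_L[\![z]\!]$ of Proposition~\ref{existz}, once $k$ is chosen large enough relative to $k_0$.

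Next I would invoke Lemma~\ref{lemm canonical ring equals www}, which identifies $\A_{L_\infty/L}^+$ with $\phi_q^{-k'}(\OO_L[\![u]\!])$ for some $k' \geq 0$, where $u$ is the canonical Lubin-Tate-type element for the relative Lubin-Tate group $S$ produced in Theorem~\ref{theo there is a formal group}, and $L_\infty = L_\infty^S$ after the finite base change made at the start of Section~7. The crucial comparison is between $\OO_L[\![z]\!]$ and $\OO_L[\![u]\!]$: both are complete regular local $\OO_L$-subalgebras of $\Atplus$ stable under a power of Frobenius, both reduce mod $\pi_L$ to a power series ring over $k_L$ inside $\Etplus$, and both capture exactly the integers of (a shift in the tower of) the field of norms $X_L(L_\infty)$ — indeed by the proof of Proposition~\ref{existsz_k} the reduction of $\OO_L[\![z]\!]$ injects into $Y_{k_0}$, and the same theory identifies the reduction of $\OO_L[\![u]\!]$ with the integers of a field of norms for $L_\infty/L$. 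Comparing valuations of $\overline{z}$ and $\overline{u}$ in $Y_{k_0}$ (after aligning the two towers by a further power of $\phi_r = \phi_q^{[\text{something}]}$, using that $r$ and $q$ are both powers of $p$ and that $L_\infty$ is the common tower), one gets that $\OO_L[\![\overline{z}]\!] \subseteq \OO_L[\![\overline{u}]\!]$ up to applying a power of Frobenius, hence by completeness $\phi_r^{i_0}(\OO_L[\![z]\!]) \subseteq \OO_L[\![u]\!]$ for suitable $i_0$. Combined with the first step, $\phi_r^{i}(w) \in \phi_r^{i_0}(\OO_L[\![z]\!]) \subseteq \OO_L[\![u]\!] \subseteq \A_{L_\infty/L}^+$ for $i$ large enough.

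The main obstacle is the bookkeeping in the second paragraph: reconciling the Frobenius $\phi_r$ attached to the dynamical system $Q$ with the Frobenius $\phi_q$ attached to the relative Lubin-Tate group $S$ (they are $\phi_p$ raised to possibly different powers), and tracking how the ``shift'' parameters $k_0$, $k'$, and the finite base changes interact across Propositions~\ref{existsz_k}, \ref{existz}, Lemma~\ref{lemm canonical ring equals www}, and Theorem~\ref{theo there is a formal group}. The cleanest way to handle this is probably not to compare the two power-series rings directly, but to argue at the level of field of norms: $\phi_r^k(w)$ generates (over $\OO_L$, after reduction) a sub-$k_L$-algebra of $\E_{L}$-type inside $\Etplus$, and since $\A_{L_\infty/L}^+$ reduces to the full ring of integers of $X_L(L_\infty)$ — which contains every element of $\Etplus$ whose $\theta \circ \phi^{-n}$ land in the $\OO_{L_n}$ — a pure-inseparability/Frobenius argument (as in Proposition~\ref{extension corps des normes bas depend de K}) shows $\phi_r^i(w)$ is eventually absorbed. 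I expect the analogue of Lemma~\ref{lemm canonical ring equals www} to do most of the work, so that the proof reduces to: ``$\phi_r^k(w) \in R_{k_0}$ for $k \gg 0$, and $R_{k_0}$ is $\phi_r$-stable with $\bigcup_i \phi_r^i(R_{k_0})$ contained in $\A_{L_\infty/L}^+$ up to a bounded Frobenius twist,'' which is exactly what the cited lemma, applied to the tower $L_\infty/L$, provides.
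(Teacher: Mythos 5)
Your proposal is an outline rather than a proof, and the crux is left unresolved. Two points.

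First, the opening step is slightly off. The element $w$ already satisfies $\theta\circ\phi_r^{-n}(w)=w_n\in\OO_{L_n}$ for all $n\geq 1$, so $w\in R_0\subseteq R_{k_0}=\OO_L[\![z]\!]$ with no Frobenius twist needed; moreover, since $\phi_r$ raises the tower index (it is a lift of the norm), applying $\phi_r^k$ to $w$ lands you in $R_{k_0-k}$-type conditions, not $R_{k_0}$. The twist $\phi_r^i$ in the conclusion of the proposition is there to reconcile $R_{k_0}$ with $\A_{L_\infty/L}^+ = \phi_q^{-k'}(\OO_L[\![u]\!])$; it is not needed to get $w$ into $R_{k_0}$.

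Second, and more seriously, the step ``$\phi_r^{i_0}(\OO_L[\![z]\!])\subseteq\OO_L[\![u]\!]$'' is asserted, not proved. Comparing the valuations of $\overline z$ and $\overline u$ inside the field-of-norms ring $Y_{k_0}$ at best yields a containment of the residue rings $k_L[\![\overline z]\!]$ and $k_L[\![\overline u]\!]$; but two $\OO_L$-power-series subrings of $\Atplus$ can reduce to the same (or nested) rings modulo $\pi_L$ without being nested as subrings of $\Atplus$. What forces the containment to lift from characteristic $p$ to $\Atplus$ is precisely the compatibility of both $\OO_L[\![z]\!]$ and $\OO_L[\![u]\!]$ with Frobenius ($\phi_r(z)=S(z)$ by Lemma~\ref{ajuster z}, and $\phi_q(u)=[\alpha](u)$) together with the rigidity built into the Cais--Davis construction: $\A_{L_\infty/L}^+$ is characterized among lifts of the ring of integers of the field of norms by its Frobenius compatibility, and here the two presentations of $L_\infty$ (via the $Q$-tower and via the Lubin--Tate tower, which coincide after the base change at the start of \S 7) must give the same canonical ring. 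Your proposal gestures at this in the final paragraph (``I expect the analogue of Lemma~\ref{lemm canonical ring equals www} to do most of the work'') but never invokes the uniqueness/canonicity statement that actually closes the gap; as written it reduces the proposition to an unproved claim of the same strength.
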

\begin{proof}
The proof is exactly the same as in \cite[Prop. 8.2]{poyeton2022formal}.
\end{proof}

\begin{prop}
\label{prop exists iso}
There exists $d \geq 1$ such that there is an isogeny from $[\alpha^d]$ to $Q$. 
\end{prop}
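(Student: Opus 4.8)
The plan is to compare the two towers living inside $\Atplus$: on one side, the canonical ring $\A_{L_\infty/L}^+$ carries the element $u$ with $\phi_q(u)=[\alpha](u)$ and $g(u)=[\chi_\alpha(g)](u)$; on the other side, Proposition~\ref{prop almost canonical lift} provides $i\geq 0$ such that $\phi_r^i(w)\in\A_{L_\infty/L}^+$, where $\phi_r(w)=Q(w)$ and $\theta\circ\phi_r^{-n}(w)=w_n$. First I would invoke Lemma~\ref{lemm canonical ring equals www} to write $\A_{L_\infty/L}^+=\phi_q^{-k}(\OO_L[\![u]\!])$ for some $k\geq 0$; after applying $\phi_q^k$ we may assume $\phi_r^i(w)\in\OO_L[\![u]\!]$, so there exists a power series $h_0(T)\in\OO_L[\![T]\!]$ with $h_0(u)=\phi_r^i(w)$. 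Since $\overline{w}$ has positive valuation in $\Etplus$ and $\overline{u}$ is (up to the tame part) a uniformizer of the field of norms, reducing mod $\mathfrak{m}_L$ shows $h_0(T)\in T\cdot\OO_L[\![T]\!]$, i.e.\ $h_0(0)=0$.

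The next step is to transport the Frobenius relations. Note $r$ is a power of $q$, say $r=q^e$, so $\phi_r=\phi_q^{\circ e}$. Applying $\phi_r$ to $h_0(u)=\phi_r^i(w)$ and using $\phi_r^{i+1}(w)=Q(\phi_r^i(w))=Q(h_0(u))$ on one hand, and $\phi_r(h_0(u))=h_0(\phi_r(u))=h_0([\alpha^e](u))$ on the other (here $\phi_q(u)=[\alpha](u)$ iterated $e$ times, using that $[\alpha]$ has $\OO_{F^{\mathrm{unr}}\cap L}$-coefficients and commutes with $\phi_q$ suitably — this is the only place one must be slightly careful, because $[\alpha]$ is an endomorphism of a \emph{relative} Lubin-Tate group, so $\phi_q\circ[\alpha]=[\alpha]^{\sigma}\circ\phi_q$ up to the Frobenius twist $\sigma$ on coefficients; after passing to a further power we land back on $[\alpha^{d}]$ for an appropriate $d$ that kills the twist), we obtain $h_0\circ[\alpha^{d}] = Q\circ h_0$ as identities of power series over $\OO_L$ — because two power series in $\OO_L[\![T]\!]$ agreeing after substituting the transcendental-like element $u$ must be equal. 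Thus $h_0$ is an isogeny from $[\alpha^{d}]$ to $Q$, and this $d\geq 1$ is exactly what the proposition asserts.

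The main obstacle I anticipate is bookkeeping the Frobenius twist: in de Shalit's relative Lubin-Tate setting $[\alpha]$ is defined over $\OO_{F^{\mathrm{unr}}\cap L}$ and the functional equation intertwining $\phi_q$ with $[\alpha]$ involves the arithmetic Frobenius on the unramified part, so $\phi_r^i(w)=h_0(u)$ only propagates cleanly after one replaces $[\alpha]$ by a power $[\alpha^{d}]$ whose exponent is a multiple of the residue degree of $F^{\mathrm{unr}}\cap L$ over $F$, so that the twist becomes trivial; simultaneously $d$ must be large enough that $\phi_r$ is a genuine power of the relevant iterated Frobenius on the canonical ring. Once those indices are aligned, the rest — surjectivity of substitution by $u$, the identification $h_0(0)=0$, and the matching of reductions mod $\mathfrak{m}_L$ (both sides being $\equiv T^{(\cdot)}$) — is routine, exactly as in the proof of \cite[Prop.~8.3]{poyeton2022formal}, to which this argument is parallel.
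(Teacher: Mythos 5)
Your proposal is correct and follows essentially the same route as the paper: combine Lemma~\ref{lemm canonical ring equals www} with Proposition~\ref{prop almost canonical lift} to write a Frobenius twist of $w$ as $h_0(u)$, then push the identity $\phi_r(w)=Q(w)$ through $h_0$ to get $Q\circ h_0 = h_0\circ[\alpha^d]$. The one point worth flagging: the Frobenius twist you worry about does not actually arise here, since the paper sets $q=\mathrm{Card}(k_{F^{\mathrm{unr}}\cap L})$, so $\phi_q$ already fixes $\OO_{F^{\mathrm{unr}}\cap L}$ (hence the coefficients of $[\alpha]$) inside $\Atplus$; thus $\phi_q^{\circ d}(\tilde u)=[\alpha]^{\circ d}(\tilde u)=[\alpha^d](\tilde u)$ with no extra passage to a power, and the $d$ of the proposition is simply the exponent such that $\phi_r=\phi_q^{\circ d}$. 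Your extra observation that $h_0(0)=0$ (needed to call $h_0$ an isogeny) is a useful detail that the paper leaves implicit.
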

\begin{proof}
Lemma \ref{lemm canonical ring equals www} and proposition \ref{prop almost canonical lift} show that there exist $i \geq 0$ and $h(T) \in \OO_L[\![T]\!]$ such that $w=h(\phi_r^{-i}(u))$. Let $d$ be such that $\phi_r = \phi_q^{\circ d}$ and let $\tilde{u} = \phi_r^{-i}(u)$, so that $w=h(\tilde{u})$. For $g \in \G_L$, we have $\phi_r(w)=Q(w)$ so that $Q(w)=\phi_r(w) = \phi_r(h(\tilde(u)) = h(\phi_r(\tilde{u}))$ and thus $Q \circ h(\tilde{u}) = h \circ [\alpha^d](\tilde{u})$ which means that $Q \circ h = h \circ [\alpha^d]$. 
\end{proof}

\begin{theo}
\label{theo maintheo}
Let $(P,U)$ be a couple of power series in $T\cdot \OO_K[\![T]\!]$ such that $P \circ U = U \circ P$, with $P'(0) \in \mathfrak{m}_K$ and $U'(0) \in \OO_K^\times$, and we assume that $P(T) \neq 0 \mod \mathfrak{m}_K$ and that $U'(0)$ is not a root of unity. Then there exists a finite extension $E$ of $K$, a Lubin-Tate formal group $S$ defined over $\OO_L$, where $E/L$ is a finite extension, endomorphisms of this formal group $P_S$ and $U_S$ over $\OO_E$, and a power series $h(T) \in T\OO_E[\![T]\!]$ such that $P \circ h = h \circ P_S$ and $U \circ h = h \circ U_S$ if and only if the following two conditions are satisfied:
\begin{enumerate}
\item there exists $V \in T\cdot\OO_K[\![T]\!]$, commuting with $P$, and an integer $d \geq 1$ such that $Q(T)=T^{p^d} \mod \mathfrak{m}_K$ where $Q=V \circ P$ ;
\item there exists a finite extension $E$ of $K$ and a sequence $(\alpha_n)_{n \in \N}$ where $\alpha_0 \neq 0$ is a root of $Q$ and $Q(\alpha_{n+1})=\alpha_n$ such that for all $n \geq 1$, the extension $E(\alpha_n)/E$ is Galois.
\end{enumerate}
\end{theo}
\begin{proof}
Lemmas \ref{lemma renormPintoQ} and \ref{lemma endo Lubin implies Q-consistent} of \S 2 imply that if such a Lubin-Tate formal group exist then the two conditions are satisfied. 

If those two conditions are satisfied, then proposition \ref{prop exists iso} shows that there exists a finite extension $E$ of $K$, a subfield $F$ of $E$, a relative Lubin-Tate group $S$, relative to the extension $F^{\mathrm{unr}} \cap E$ of $F$, and an endomorphism $Q_S$ of $S$ such that there exists an isogeny from $Q_S$ to $Q$. Thus there exists an isogeny from an endomorphism $P_S$ of $S$ to $P$. In order to conclude, it suffices to notice that a relative Lubin-Tate formal group $S$, relative to an extension $F^{\mathrm{unr}} \cap E$ of $F$ is actually isomorphic over $F^{\mathrm{unr}} \cap E$ to a Lubin-Tate formal group $S'$ defined over $F$.
\end{proof}

\section{A particular case of Lubin's conjecture}
We now apply the results from the previous sections to the particular case where $P(T) = T^p \mod \mathfrak{m}_K$. Let $P, U \in T\cdot \OO_K[\![T]\!]$ such that $P \circ U = U \circ P$, with $P(T) = T^p \mod \mathfrak{m}_K$ and $U'(0) \in \OO_K^\times$ not a root of unity. We consider as in \S 3 a $P$-consistent sequence $(v_n)$ and we let $K_n = K(v_n)$ for $n \geq 0$. We let $n_0$ be as in proposition \ref{prop K(vn) tot ramified}. 

\begin{prop}
There exists $m_0 \geq 0$ such that for all $m \geq m_0$, the extension $K_m/K_{m_0}$ is Galois.
\end{prop}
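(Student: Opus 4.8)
The plan is to deduce from Proposition~\ref{prop K(vn) tot ramified} and the analysis of the height one situation in \cite{lubin1994nonarchimedean} that, for a suitable $m_0$, the whole extension $K_\infty/K_{m_0}$ is abelian; since every intermediate field of an abelian extension is Galois over the base, this gives the statement at once with that $m_0$. First recall that by Proposition~\ref{prop K(vn) tot ramified} there is $n_0$ such that for $n\geq n_0$ the extension $K_{n+1}/K_n$ is totally ramified of degree $p$ and the $p$ roots of $P(T)-v_n$ all lie in the open unit disk, so that the Weierstrass polynomial of $P(T)-v_n$ is exactly the minimal polynomial of $v_{n+1}$ over $K_n$. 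As $p$ is prime, $K_{n+1}/K_n$ is Galois if and only if that minimal polynomial splits in $K_{n+1}$; but such a step by step Galois property would not suffice to conclude that $K_m/K_{m_0}$ is Galois, so one really wants to control all the conjugates of $v_m$ over $K_{m_0}$, i.e. the roots in the open unit disk of $P^{\circ(m-m_0)}(T)-v_{m_0}$.

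The key point is that $K_\infty$ is contained in the extension $\mathcal{L}$ of $K$ generated by all the roots lying in the open unit disk of all the iterates $P^{\circ n}$: indeed $P(v_0)=0$ and $P(v_{n+1})=v_n$ give $P^{\circ(n+1)}(v_n)=0$, so $v_n$ is one of those roots and $K_n\subseteq\mathcal{L}$. Thus it is enough to produce $m_0\geq n_0$ with $\mathcal{L}/K_{m_0}$ abelian. To get this I would use that, since $P$ has Weierstrass degree $p$ and commutes with $U$ whose multiplier $U'(0)$ is not a root of unity, the results of \cite{lubin1994nonarchimedean} force the roots of the iterates of $P$ to behave like the torsion points of a one dimensional formal group of height one; concretely, over a suitable finite extension $E$ of $K$ contained in $K_\infty$, the field $\mathcal{L}$ becomes the field generated over $E$ by the torsion of such a formal group, hence is abelian over $E$. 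Taking $m_0$ with $E\subseteq K_{m_0}$ (and $m_0\geq n_0$) then gives $\mathcal{L}/K_{m_0}$ abelian, and for every $m\geq m_0$ one has $K_{m_0}\subseteq K_m\subseteq\mathcal{L}$, so $K_m/K_{m_0}$ is Galois.

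The main obstacle is to carry out the previous paragraph without circularity: Theorem~\ref{theo second}, and with it the full ``formal group in the background'', is only established afterwards, its proof going through Theorem~\ref{theo maintheo} and using the present proposition; so the structural input about $\mathcal{L}$ must be extracted from \cite{lubin1994nonarchimedean} alone, as a purely dynamical statement about power series of Weierstrass degree $p$ commuting with an invertible series of infinite multiplicative order at the origin. Two things then require care: controlling over which finite extension $E$ of $K$ this formal group structure is defined, and checking that $E$ can be taken inside $K_\infty$, so that it is absorbed into $K_{m_0}$ for $m_0$ large (this is where the fact, implicit in Proposition~\ref{prop K(vn) tot ramified}, that $K_\infty/K_{n_0}$ is totally ramified enters, since it bounds the unramified part of $K_\infty$); and making sure that no ``spurious'' roots of the iterates outside the open unit disk interfere, which is again guaranteed for $n\geq n_0$ by Proposition~\ref{prop K(vn) tot ramified}. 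Once these points are settled the bookkeeping of the second paragraph concludes the proof.
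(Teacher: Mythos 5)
There is a genuine gap, and you have in fact put your finger on it yourself without resolving it. Your strategy is to show $K_\infty/K_{m_0}$ is abelian by exhibiting a one-dimensional formal group of height one whose torsion generates a field containing $K_\infty$. But the existence of such a formal group is precisely the content of Lubin's conjecture; it is not a theorem of \cite{lubin1994nonarchimedean}. Lubin's paper records the dynamical phenomena and the experimental observation, but does not prove that a formal group sits in the background, and the present paper needs this very proposition as an input to its Theorem~\ref{theo maintheo} before it can conclude a formal-group structure exists. So ``extract the structural input about $\mathcal{L}$ from \cite{lubin1994nonarchimedean} alone'' is not a step you can take: that extraction is impossible with the tools cited, and you never actually carry it out --- your ``two things that require care'' (identifying $E$, bounding the unramified part) are subordinate to a formal group you have not produced. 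As written, the proposal is circular.

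The paper's proof avoids the formal group entirely and only needs Galois, not abelian. It uses two purely dynamical facts from \cite{lubin1994nonarchimedean}: Proposition~3.2, which identifies the roots of iterates of $P$ with the fixed points of iterates of $U$, and the discussion on p.~333, which shows that the $U$-orbits of such roots have $p$-power cardinality. Since $U(T)-T$ has only finitely many roots in the open disk, one picks $n$ with $U(v_n)=v_n$ but $U(v_{n+1})\neq v_{n+1}$; then $U$ sends $v_{n+1}$ to another root of $P(T)-v_n$, the orbit has $p$-power size $>1$, hence size exactly $p$, so $K_{n+1}$ already contains all $p$ roots and $K_{n+1}/K_n$ is Galois. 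Iterating, the $U$-orbit of $v_m$ sweeps out all roots of $P^{\circ(m-n)}(T)-v_n$, giving $K_m/K_n$ Galois for all $m\geq n$. This is the elementary, non-circular argument your sketch is missing.
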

\begin{proof}
By \cite[Prop. 3.2]{lubin1994nonarchimedean}, the roots of iterates of $P$ are exactly the fixed points of the iterates of $U$. Up to replacing $U$ by some power of $U$, we can assume that $U'(0) = 1 \mod \mathfrak{m}_K$ and that there exists $n \geq n_0$ such that $U(v_n) = v_n$ but $U(v_{n+1}) \neq v_{n+1}$ (since $U(T)-T$ admits only a finite number of roots in the unit disk). 

Since $U(v_n) = v_n$ and $U$ commutes with $P$, this implies that $U(v_{n+1})$ is also a root of $P(T)-v_n$. The discussion on page 333 of \cite{lubin1994nonarchimedean} shows that the set $\{U^{\circ k}(v_{n+1})\}_{k \in \N}$ has cardinality a power of $p$, and is not of cardinal $1$ since $U(v_{n+1}) \neq v_{n+1}$ by assumption. Since $P(T)-v_n$ has exactly $p$ roots, this implies that the set $\{U(v_{n+1})\}$ has cardinality $p$, and thus all the roots of $P(T)-v_n$ are contained in $K_{n+1}$, so that $K_{n+1}/K_n$ is Galois. 

Let $m > n$. The extension $K_m/K_n$ is generated by all the roots of $P^{\circ (m-n)}(T)-v_n = P^{\circ (m-n)}(T)-U(v_n)$. Since $U$ swaps all the roots of $P(T)-v_n$, it is easy to see that the $U$-orbit $\{U^{\circ k}(v_m)\}_{k \geq 0}$ contains all the roots of $P^{\circ (m-n)}(T)-v_n$, so that $K_m/K_n$ is Galois. This prove the proposition.
\end{proof}

We are now in the conditions of our theorem \ref{theo maintheo}, which yields the following:

\begin{coro}
Lubin's conjecture is true for $(P,U)$.
\end{coro}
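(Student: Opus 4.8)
The plan is to verify that the pair $(P,U)$ satisfies the two conditions appearing in Theorem~\ref{theo maintheo} and then to invoke that theorem directly. Recall the data already in place in this section: $(v_n)$ is the fixed $P$-consistent sequence with $v_0 \neq 0$ a root of $P$ and $P(v_{n+1}) = v_n$, $K_n = K(v_n)$, $n_0$ is the integer provided by Proposition~\ref{prop K(vn) tot ramified}, and the preceding proposition furnishes an $m_0 \geq 0$ such that the extension $K_m/K_{m_0}$ is Galois for every $m \geq m_0$. These three ingredients are exactly what is needed.

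For condition (1), I use the standing hypothesis of this section that $P(T) = T^p \mod \mathfrak{m}_K$: taking $V(T) = T$, which trivially commutes with $P$, and $d = 1$, the power series $Q := V \circ P = P$ satisfies $Q(T) = T^{p^d} \mod \mathfrak{m}_K$, so condition (1) holds with no further work. For condition (2), I set $E := K_{m_0}$ and $\alpha_n := v_n$ for all $n$. Then $\alpha_0 = v_0 \neq 0$ is a root of $Q = P$ and $Q(\alpha_{n+1}) = P(v_{n+1}) = v_n = \alpha_n$, so $(\alpha_n)$ is a $Q$-consistent sequence in Lubin's sense. For the Galois requirement, note $E(\alpha_n) = K_{m_0} \cdot K_n$: when $n \leq m_0$ we have $v_n \in K_n \subseteq K_{m_0} = E$, so $E(\alpha_n) = E$ is trivially Galois over $E$; when $n > m_0$ we have $E(\alpha_n) = K_n$, and $K_n/K_{m_0}$ is Galois by the preceding proposition. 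Hence $E(\alpha_n)/E$ is Galois for all $n \geq 1$, and condition (2) is verified.

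With both conditions in hand, Theorem~\ref{theo maintheo} applies and yields a finite extension $E'$ of $K$, a Lubin-Tate formal group $S$ defined over $\OO_L$ with $E'/L$ finite, endomorphisms $P_S$ and $U_S$ of $S$ over $\OO_{E'}$, and a power series $h(T) \in T\OO_{E'}[\![T]\!]$ such that $P \circ h = h \circ P_S$ and $U \circ h = h \circ U_S$. Since a Lubin-Tate formal group over $\OO_L$ is in particular a one-dimensional formal group defined over $\OO_L \subseteq \OO_{E'}$, with $P_S,U_S$ endomorphisms of it, this is precisely the conclusion of Conjecture~\ref{lubinconj} for $(P,U)$. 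There is no genuine obstacle here, as Theorem~\ref{theo maintheo} together with the preceding proposition does all the work; the only point requiring a moment's care is matching Lubin's notion of a $Q$-consistent sequence, which must begin at a nonzero root of $Q$, with the sequence $(v_n)$, and this is handled by the choice $Q = P$.
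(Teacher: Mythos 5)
Your proof is correct and takes essentially the same approach as the paper, which simply asserts that the hypotheses of Theorem~\ref{theo maintheo} are met; you make this explicit by choosing $V=T$, $d=1$, $Q=P$ for condition (1), and $E=K_{m_0}$, $\alpha_n=v_n$ for condition (2), with the Galois requirement supplied by the preceding proposition via $E(\alpha_n)=K_n$ for $n>m_0$.
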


\bibliographystyle{amsalpha}
\bibliography{bibli}

\providecommand{\bysame}{\leavevmode\hbox to3em{\hrulefill}\thinspace}
\providecommand{\MR}{\relax\ifhmode\unskip\space\fi MR }
% \MRhref is called by the amsart/book/proc definition of \MR.
\providecommand{\MRhref}[2]{%
  \href{http://www.ams.org/mathscinet-getitem?mr=#1}{#2}
}
\providecommand{\href}[2]{#2}
\begin{thebibliography}{CDL16}

\bibitem[BC09]{BrinonConrad}
Olivier Brinon and Brian Conrad, \emph{{CMI} summer school notes on $p$-adic
  hodge theory}.

\bibitem[Ber16a]{Ber14iterate}
Laurent Berger, \emph{Iterated extensions and relative {L}ubin-{T}ate groups},
  Ann. Math. Qu\'e. \textbf{40} (2016), no.~1, 17--28.

\bibitem[Ber16b]{Ber14MultiLa}
\bysame, \emph{Multivariable {$(\varphi,\Gamma)$}-modules and locally analytic
  vectors}, Duke Math. J. \textbf{165} (2016), no.~18, 3567--3595.

\bibitem[Ber17]{Ber17}
\bysame, \emph{Lubin's conjecture for full {$p$}-adic dynamical systems},
  Publications math\'ematiques de {B}esan\c{c}on. {A}lg\`ebre et th\'eorie des
  nombres, 2016, Publ. Math. Besan\c{c}on Alg\`ebre Th\'eorie Nr., vol. 2016,
  Presses Univ. Franche-Comt\'e, Besan\c{c}on, 2017, pp.~19--24.

\bibitem[Ber19]{berger2019nonarchimedean}
\bysame, \emph{Nonarchimedean dynamical systems and formal groups}, Proceedings
  of the American Mathematical Society \textbf{147} (2019), no.~4, 1413--1419.

\bibitem[CC98]{cherbonnier1998representations}
Fr{\'e}d{\'e}ric Cherbonnier and Pierre Colmez, \emph{Repr{\'e}sentations
  p-adiques surconvergentes}, Inventiones mathematicae \textbf{133} (1998),
  no.~3, 581--611.

\bibitem[CD15]{cais2015canonical}
Bryden Cais and Christopher Davis, \emph{Canonical {C}ohen rings for norm
  fields}, International Mathematics Research Notices \textbf{2015} (2015),
  no.~14, 5473--5517.

\bibitem[CDL16]{cais2014characterization}
Bryden Cais, Christopher Davis, and Jonathan Lubin, \emph{A characterization of
  strictly {APF} extensions}, J. Th\'eor. Nombres Bordeaux \textbf{28} (2016),
  no.~2, 417--430.

\bibitem[Col02]{Col02}
Pierre Colmez, \emph{Espaces de {B}anach de dimension finie}, Journal of the
  Institute of Mathematics of Jussieu \textbf{1} (2002), no.~3, 331--439.

\bibitem[dS85]{de1985relative}
Ehud de~Shalit, \emph{Relative {L}ubin-{T}ate {G}roups}, Proceedings of the
  American Mathematical Society \textbf{95} (1985), no.~1, 1--4.

\bibitem[Fon94]{fontaine1994corps}
Jean-Marc Fontaine, \emph{Le corps des p{\'e}riodes $p$-adiques},
  Ast{\'e}risque (1994), no.~223, 59--102.

\bibitem[Li96]{Li96}
Hua-Chieh Li, \emph{When is a $p$-adic power series an endomorphism of a formal
  group?}, Proceedings of the American Mathematical Society \textbf{124}
  (1996), no.~8, 2325--2329.

\bibitem[Li97a]{Li97}
\bysame, \emph{Isogenies between dynamics of formal groups}, journal of number
  theory \textbf{62} (1997), no.~2, 284--297.

\bibitem[Li97b]{Li97b}
\bysame, \emph{$p$-adic power series which commute under composition},
  Transactions of the American Mathematical Society \textbf{349} (1997), no.~4,
  1437--1446.

\bibitem[LMS02]{laubie2002systemes}
Fran{\c{c}}ois Laubie, Abbas Movahhedi, and Alain Salinier, \emph{Syst{\`e}mes
  dynamiques non archim{\'e}diens et corps des normes ({N}on-{A}rchimedean
  {D}ynamic {S}ystems and {F}ields of {N}orms)}, Compositio Mathematica
  \textbf{132} (2002), no.~1, 57--98.

\bibitem[Lub94]{lubin1994nonarchimedean}
Jonathan Lubin, \emph{Nonarchimedean dynamical systems}, Compositio Mathematica
  \textbf{94} (1994), no.~3, 321--346.

\bibitem[Poy22]{poyeton2022formal}
L{\'e}o Poyeton, \emph{Formal groups and lifts of the field of norms}, Algebra
  \& Number Theory \textbf{16} (2022), no.~2, 261--290.

\bibitem[Sar05]{sarkis2005lifting}
Ghassan Sarkis, \emph{On lifting commutative dynamical systems}, Journal of
  Algebra \textbf{293} (2005), no.~1, 130--154.

\bibitem[Sar10]{sarkis10}
\bysame, \emph{Height-one commuting power series over {$\Bbb Z_p$}}, Bull.
  Lond. Math. Soc. \textbf{42} (2010), no.~3, 381--387.

\bibitem[Spe18]{specter2018crystalline}
Joel Specter, \emph{The crystalline period of a height one $p$-adic dynamical
  system}, Transactions of the American Mathematical Society \textbf{370}
  (2018), no.~5, 3591--3608.

\bibitem[SS13]{sarkis2013galois}
Ghassan Sarkis and Joel Specter, \emph{Galois extensions of height-one
  commuting dynamical systems}, Journal de th{\'e}orie des nombres de Bordeaux
  \textbf{25} (2013), no.~1, 163--178.

\bibitem[Win83]{Win83}
Jean-Pierre Wintenberger, \emph{Le corps des normes de certaines extensions
  infinies de corps locaux; applications}, Annales scientifiques de l'Ecole
  Normale Sup{\'e}rieure, vol.~16, Soci{\'e}t{\'e} math{\'e}matique de France,
  1983, pp.~59--89.

\end{thebibliography}
\end{document}